  \theoremstyle{plain}
    \newtheorem{theorem}{Theorem}[section]
    \newtheorem{proposition}[theorem]{Proposition}
   \newtheorem{lemma}[theorem]{Lemma}
    \newtheorem{corollary}[theorem]{Corollary}
    \newtheorem{subsec}[theorem]{}
\theoremstyle{definition}
    \newtheorem{definition}[theorem]{Definition}
        \newtheorem{remark}[theorem]{Remark}
\theoremstyle{remark}
\title{}
\author{}
\date{}
\begin{document}
\title{Non-abelian extensions of Rota-Baxter Lie algebras and inducibility of automorphisms}

\author{Apurba Das}
\address{Department of Mathematics,
Indian Institute of Technology Kharagpur, Kharagpur-721302, West Bengal, India.}
\email{apurbadas348@gmail.com, apurbadas348@maths.iitkgp.ac.in}

\author{Samir Kumar Hazra}
\address{Harish-Chandra Research Institute, Chhatnag Road, Jhusi, Prayagraj-211019, India.}
\email{samirhazra@hri.res.in}

\author{Satyendra Kumar Mishra}
\address{Institute for Advancing Intelligence, TCG Centres for Research and Education in Science and Technology, Kolkata, West Bengal, India.}
\email{satyamsr10@gmail.com}


%


%


\subjclass[2010]{16S80, 16W99, 17B10, 17B56}
\keywords{{Rota-Baxter Lie algebras, Non-abelian extensions, Non-abelian Cohomology, Inducibility of automorphisms, Wells exact sequence.}}

\begin{abstract}
A Rota-Baxter Lie algebra $\mathfrak{g}_T$ is a Lie algebra $\mathfrak{g}$ equipped with a Rota-Baxter operator $T : \mathfrak{g} \rightarrow \mathfrak{g}$. In this paper, we consider non-abelian extensions of a Rota-Baxter Lie algebra $\mathfrak{g}_T$ by another Rota-Baxter Lie algebra $\mathfrak{h}_S.$ We define the non-abelian cohomology $H^2_{nab} (\mathfrak{g}_T, \mathfrak{h}_S)$ which classifies {equivalence classes of} such extensions. Given a non-abelian extension
\[
\xymatrix{
0 \ar[r] & \mathfrak{h}_S \ar[r]^{i} & \mathfrak{e}_U \ar[r]^{p} & \mathfrak{g}_T \ar[r] & 0
}
\]
of Rota-Baxter Lie algebras, we also show that the obstruction for a pair of Rota-Baxter automorphisms in $\mathrm{Aut}(\mathfrak{h}_S ) \times \mathrm{Aut}(\mathfrak{g}_T)$ to be induced by an automorphism in $\mathrm{Aut}(\mathfrak{e}_U)$ lies in the cohomology group $H^2_{{nab}} (\mathfrak{g}_T, \mathfrak{h}_S)$. As a byproduct, we obtain the Wells short-exact sequence in the context of Rota-Baxter Lie algebras. Finally, we show how these results fit with abelian extensions of Rota-Baxter Lie algebras.
\end{abstract}


\maketitle



\noindent

\thispagestyle{empty}

\tableofcontents

\vspace{0.2cm}

\section{Introduction}

Various kinds of extensions (e.g., central extensions, abelian extensions, non-abelian extensions, etc.) of algebraic structures were first developed by Hochschild, Eilenberg, Maclane, and Serre, among others \cite{hochschild,hoch,serre,lyndon,eilen}. Among all extensions, the non-abelian extension theory is the most general one. In \cite{eilen} Eilenberg and Maclane first considered non-abelian extensions of abstract groups, which led them to introduce low-dimensional non-abelian cohomology groups. Subsequently, similar results for (super) Lie algebras, Leibniz algebras, Lie $2$-algebras, $L_{\infty}$-algebras are also considered in the literature \cite{casas,Chen,fial-pen,hazra-habib,hoch,inas,laza,liu}. A non-abelian extension of a Lie algebra $\mathfrak{g}$ by another Lie algebra $\mathfrak{h}$ is a short exact sequence $0 \rightarrow \mathfrak{h} \rightarrow \mathfrak{e} \rightarrow \mathfrak{g} \rightarrow 0$ of Lie algebras. It has been shown in \cite{fial-pen,hoch,inas} that non-abelian extensions of Lie algebras can be characterized in terms of derivations of Lie algebras. In \cite{yael} the author described non-abelian extensions in terms of the Deligne groupoid of a suitable differential graded Lie algebra. An abelian extension of a Lie algebra $\mathfrak{g}$ by a representation $\mathfrak{h}$ is a particular type of non-abelian extension in which the Lie bracket of $\mathfrak{h}$ is trivial, and the induced representation on $\mathfrak{h}$ is the prescribed one. It is well-known that the Chevalley-Eilenberg cohomology of Lie algebras classifies abelian extensions. 

\medskip

Another interesting study related to extensions of algebraic structures is given by the inducibility of pair of automorphisms.  See \cite{jin,passi,robinson,wells} for more details about this problem in (abelian) group extensions. In the context of Lie algebras, the problem can be described as follows. Let $0 \rightarrow \mathfrak{h} \xrightarrow{i} \mathfrak{e} \xrightarrow{p} \mathfrak{g} \rightarrow 0$ be an (abelian) extension of Lie algebras. Let $\mathrm{Aut}_\mathfrak{h} (\mathfrak{e})$ be the space of all Lie algebra automorphisms $\gamma \in \mathrm{Aut}(\mathfrak{e})$ that satisfies $\gamma|_\mathfrak{h} \subset \mathfrak{h}$. Then there is a group homomorphism $\tau : \mathrm{Aut}_\mathfrak{h} (\mathfrak{e}) \rightarrow \mathrm{Aut}(\mathfrak{h}) \times \mathrm{Aut}(\mathfrak{g})$, $\tau (\gamma) = (\gamma|_\mathfrak{h}, p \gamma s)$, where $s$ is a section of the map $p$. A pair of automorphisms $(\beta, \alpha) \in \mathrm{Aut}_\mathfrak{h} (\mathfrak{e}) \rightarrow \mathrm{Aut}(\mathfrak{h}) \times \mathrm{Aut}(\mathfrak{g})$ is said to be inducible if $(\beta, \alpha)$ lies in the image of $\tau$. The inducibility problem then asks the following: 
When a pair of automorphisms in $\mathrm{Aut}(\mathfrak{h}) \times \mathrm{Aut}(\mathfrak{g})$ is inducible?
The answer to this question is already addressed in \cite{bar-singh,hazra-habib} for abelian extensions. In the same references, the authors also find the analogue of the Wells exact sequence that connects automorphisms, derivations and cohomology of Lie algebras.

\medskip

Algebras are often equipped with additional structures. For instance, classical algebras such as Lie algebras, associative algebras and higher homotopy algebras with involutions appear in the standard constructions of algebras arising from some geometric contexts, where the underlying geometric object has an involution \cite{braun,costello}. Such algebras are usually called $\ast$-algebras or involutive algebras. In \cite{loday-der} Loday considered algebras equipped with distinguished derivation. Their cohomology and deformation theory is considered in \cite{lieder,das-jhrs}. Recently, Rota-Baxter operators on associative algebras pay very much attention due to connections with quantum field theory, algebraic combinatorics, splitting of algebras, Yang-Baxter equations and infinitesimal bialgebras \cite{connes,bai,aguiar}. The notion of the Rota-Baxter operator originated in the work of Baxter in the fluctuation theory of probability \cite{baxter}. Subsequently, such an operator was studied by Rota, among others \cite{rota} and recently by Guo with his coauthors \cite{guo-book}. Rota-Baxter operators on Lie algebras first appeared in the work of Kuperschmidt in the study of classical $r$-matrices \cite{kuper}. A Lie algebra $\mathfrak{g}$ equipped with a Rota-Baxter operator $T: \mathfrak{g} \rightarrow \mathfrak{g}$ is called a Rota-Baxter Lie algebra. We denote a Rota-Baxter algebra as above simply by $\mathfrak{g}_T$. See the next section for more details. Cohomology of Rota-Baxter Lie algebras with coefficients in a representation was introduced, and abelian extensions are studied in \cite{jiang-sheng} (see also \cite{laza-rota,DasSK2}).

\medskip

In this paper, we consider non-abelian extensions of a Rota-Baxter Lie algebra $\mathfrak{g}_T$ by another Rota-Baxter Lie algebra $\mathfrak{h}_S$. We observed that such non-abelian extensions could be described by certain triples of maps satisfying some suitable properties. This allows to construct the non-abelian cohomology $H^2_{nab}(\mathfrak{g}_T, \mathfrak{h}_S)$ which classifies equivalence classes of non-abelian extensions of $\mathfrak{g}_T$ by $\mathfrak{h}_S$. 

\medskip

In the next part, we consider the inducibility of a pair of automorphisms in a non-abelian extension of Rota-Baxter Lie algebras. We first find a necessary and sufficient condition for a pair of automorphisms to be inducible. This result motivates us to define the Wells map $\mathcal{W} : \mathrm{Aut}(\mathfrak{h}_S) \times \mathrm{Aut}(\mathfrak{g}_T) \rightarrow H^2_{nab} (\mathfrak{g}_T, \mathfrak{h}_S)$ in the context of non-abelian extension of Rota-Baxter Lie algebras. In terms of the Wells map, a pair $(\beta, \alpha) \in \mathrm{Aut}(\mathfrak{h}_S) \times \mathrm{Aut}(\mathfrak{g}_T)$ is inducible if and only if $\mathcal{W} ((\beta, \alpha)) = 0$.  We also derive the analogue of the Wells short exact sequence connecting various automorphism groups and the cohomology $H^2_{nab} (\mathfrak{g}_T, \mathfrak{h}_S)$. Finally, we observe how the above results fit with the abelian extensions of a Rota-Baxter Lie algebra by a representation.

\medskip

The paper is organized as follows. In Section \ref{sec-2}, we recall some necessary background on Rota-Baxter Lie algebras. In Section \ref{sec-3}, we consider non-abelian extensions of Rota-Baxter Lie algebras and classify them in terms of non-abelian cohomology. Given a non-abelian extension of Rota-Baxter Lie algebras, we consider the inducibility problem and the Wells short exact sequence in Section \ref{sec-4} and Section \ref{sec-5}, respectively. Finally, in Section \ref{sec-6}, we revise abelian extensions of Rota-Baxter Lie algebras and consider the inducibility problem in abelian extensions.

\medskip

All vector spaces, (multi)linear maps, Lie algebras, and wedge products are over a field ${\bf k}$ of characteristic $0$. We usually denote the elements of the Lie algebra $\mathfrak{g}$ by $x, y, z, x_1, x_2, \ldots$ and the elements of the Lie algebra $\mathfrak{h}$ by $h, k, l, h_1, h_2, \ldots$.

\medskip

\section{Rota-Baxter Lie algebras}\label{sec-2}
In this section, we recall some basics of Rota-Baxter Lie algebras that are required in the course of our study. Our main references are \cite{kuper,jiang-sheng,laza-rota}.

\begin{definition}
Let $\mathfrak{g} = (\mathfrak{g}, [~,~]_\mathfrak{g})$ be a Lie algebra. A {\bf Rota-Baxter operator} on $\mathfrak{g}$ is a linear map $T : \mathfrak{g} \rightarrow \mathfrak{g}$ satisfying
\begin{align*}
[T(x), T(y)]_\mathfrak{g} = T ( [T(x), y]_\mathfrak{g} + [x, T(y)]_\mathfrak{g}), \text{ for } x, y \in \mathfrak{g}.
\end{align*}
\end{definition}

A {\bf Rota-Baxter Lie algebra} is a Lie algebra $\mathfrak{g}$ equipped with a Rota-Baxter operator $T: \mathfrak{g} \rightarrow \mathfrak{g}$. We denote a Rota-Baxter Lie algebra simply by the notation $\mathfrak{g}_T$.

\begin{definition}
Let $\mathfrak{g}_T$ and $\mathfrak{g}'_{T'}$ be two Rota-Baxter Lie algebras. A {\bf morphism} $\varphi: \mathfrak{g}_T \rightarrow \mathfrak{g}'_{T'}$ of Rota-Baxter Lie algebras is given by a Lie algebra homomorphism $\varphi : \mathfrak{g} \rightarrow \mathfrak{g}'$ satisfying ${T}' \circ \varphi = \varphi \circ T$. It is said to be an {\bf isomorphism} if $\varphi$ is a linear isomorphism.
\end{definition}

Let $\mathfrak{g}_T$ be a Rota-Baxter Lie algebra. We denote by $\mathrm{Aut}(\mathfrak{g}_T)$ the set of all automorphisms (self isomorphisms) of the Rota-Baxter Lie algebra $\mathfrak{g}_T$. Then $\mathrm{Aut} (\mathfrak{g}_T)$ has an obvious group structure, called the automorphism group.

\medskip

\begin{definition}
Let $\mathfrak{g}_T$ be a Rota-Baxter Lie algebra. A {\bf representation} of $\mathfrak{g}_T$ consists of a $\mathfrak{g}$-module $\mathfrak{h}$ (i.e., $\mathfrak{h}$ is a vector space with a Lie algebra homomorphism $\psi : \mathfrak{g} \rightarrow \mathrm{End}(\mathfrak{h})$, called the action map) equipped with a linear map $S : \mathfrak{h} \rightarrow \mathfrak{h}$ satisfying
\begin{align*}
\psi_{T(x)}  S(h) = S \big(  \psi_{T(x)}  h + \psi_{ x } S(h) \big), \text{ for } x \in \mathfrak{g}, h \in \mathfrak{h}.
\end{align*}
\end{definition}
A representation as above may be simply denoted by $\mathfrak{h}_S$ when the action map $\psi$ is clear from the context. It is clear that any Rota-Baxter Lie algebra $\mathfrak{g}_T$ is a representation of itself where the $\mathfrak{g}$-module structure on $\mathfrak{g}$ is given by the adjoint action.

\medskip

Let $\mathfrak{g}$ be a Lie algebra and $\mathfrak{h}$ be a $\mathfrak{g}$-module. The Chevalley-Eilenberg cochain complex of $\mathfrak{g}$ with coefficients in $\mathfrak{h}$ is given by $\{ C^\bullet (\mathfrak{g}, \mathfrak{h}), \delta^\bullet \}$, where $C^n (\mathfrak{g}, \mathfrak{h}) = \mathrm{Hom} (\wedge^n \mathfrak{g}, \mathfrak{h})$ for $n \geq 0$, and
\begin{align*}
\delta^n  (f) (x_1, \ldots, x_{n+1}) =~& \sum_{i=1}^{n+1}  (-1)^{i+1} ~ \psi_{x_i} f (x_1, \ldots, \widehat{x_i}, \ldots, x_{n+1}) \\
~&+ \sum_{1 \leq i < j \leq n+1} (-1)^{i+j}~ f ([x_i, x_j]_\mathfrak{g}, x_1, \ldots, \widehat{x_i}, \ldots, \widehat{x_j}, \ldots, x_{n+1}), \text{ for } f \in C^n (\mathfrak{g}, \mathfrak{h}).
\end{align*}
Next, let $\mathfrak{g}_T$ be a Rota-Baxter Lie algebra and $\mathfrak{h}_S$ be a representation of it. Then there is another cochain complex $\{ C^\bullet (\mathfrak{g}, \mathfrak{h}), \partial^\bullet \}$ on the same cochain groups with differential
\begin{align*}
\partial^n (f) (x_1, \ldots, x_{n+1} ) =& \sum_{i=1}^{n+1}  (-1)^{i+1} ~ \big( \psi_{T(x_i)} f (x_1, \ldots, \widehat{x_i}, \ldots, x_{n+1}) - S ( \psi_{x_i} f (x_1, \ldots, \widehat{x_i}, \ldots, x_{n+1}) ) \big) \\
~&+ \sum_{1 \leq i < j \leq n+1} (-1)^{i+j}~ f ([Tx_i, x_j]_\mathfrak{g} + [x_i, Tx_j]_\mathfrak{g}, x_1, \ldots, \widehat{x_i}, \ldots, \widehat{x_j}, \ldots, x_{n+1}),
\end{align*}
for $f \in C^n(\mathfrak{g}, \mathfrak{h}).$ Combining the above two complexes, the authors in \cite{jiang-sheng} introduced a new cochain complex $\{ C^\bullet_\mathrm{RBLie} (\mathfrak{g}_T, \mathfrak{h}_S) , \delta_\mathrm{RBLie} \}$, where
\begin{align*}
C^0_\mathrm{RBLie} (\mathfrak{g}_T, \mathfrak{h}_S) = 0, \quad C^1_\mathrm{RBLie} (\mathfrak{g}_T, \mathfrak{h}_S) = C^1 (\mathfrak{g}, \mathfrak{h}) ~~~ \text{ and } ~~~ C^n_\mathrm{RBLie} (\mathfrak{g}_T, \mathfrak{h}_S) = C^n (\mathfrak{g}, \mathfrak{h}) \oplus C^{n-1} (\mathfrak{g}, \mathfrak{h}), \text{ for } n \geq 2
\end{align*}
and $\delta_\mathrm{RBLie} : C^n_\mathrm{RBLie} (\mathfrak{g}_T, \mathfrak{h}_S) \rightarrow C^{n+1}_\mathrm{RBLie} (\mathfrak{g}_T, \mathfrak{h}_S)$ is given by
\begin{align*}
\delta^n_\mathrm{RBLie} (f, \Theta) = \big( \delta^n (f),~ \partial^{n-1} (\Theta) + (-1)^n f \circ  T^{\otimes n} - (-1)^n \sum_{i=1}^n Sf \circ (T \otimes \cdots \otimes \underbrace{\mathrm{id}}_{i\text{-th}} \otimes \cdots \otimes T)  \big),
\end{align*}
for $(f, \Theta) \in C^n_\mathrm{RBLie} (\mathfrak{g}_T, \mathfrak{h}_S)$. The cohomology of the cochain complex $\{ C^\bullet_\mathrm{RBLie} (\mathfrak{g}_T, \mathfrak{h}_S) , \delta_\mathrm{RBLie} \}$ is called the {\bf cohomology} of the Rota-Baxter Lie algebra $\mathfrak{g}_T$ with coefficients in $\mathfrak{h}_S$. This cohomology will be used only in Section \ref{sec-6}, where we revisit abelian extensions of Rota-Baxter Lie algebras from \cite{jiang-sheng}. 

A linear map $d : \mathfrak{g} \rightarrow \mathfrak{h}$ is called a derivation on $\mathfrak{g}_T$ with values in $\mathfrak{h}_S$ if $\delta^1_\mathrm{RBLie} (d) = 0$, equivalently,
\begin{align*}
d [x,y]_\mathfrak{g} = \psi_x dy - \psi_y dx ~~~ \text{ and } ~~~ S \circ d = d \circ T, ~ \text{ for } x, y \in \mathfrak{g}.
\end{align*}
We denote the set of all derivations on $\mathfrak{g}_T$ with values in $\mathfrak{h}_S$ by $\mathrm{Der}(\mathfrak{g}_T, \mathfrak{h}_S)$.

\section{Non-abelian Extensions of Rota-Baxter Lie algebras}\label{sec-3}

In this section, we study non-abelian extensions of a Rota-Baxter Lie algebra by another Rota-Baxter Lie algebra. We define the second non-abelian cohomology space that classifies equivalence classes of such extensions.

\begin{definition}
Let $\mathfrak{g}_T$ and $\mathfrak{h}_S$ be two Rota-Baxter Lie algebras. A non-abelian {\bf extension} of $\mathfrak{g}_T$ by $\mathfrak{h}_S$ is a Rota-Baxter Lie algebra $\mathfrak{e}_U$ equipped with a  short exact sequence of Rota-Baxter Lie algebras
\begin{align}\label{abelian-diagram}
\xymatrix{
0 \ar[r] & \mathfrak{h}_S \ar[r]^{i} & \mathfrak{e}_U \ar[r]^{p} & \mathfrak{g}_T \ar[r] & 0.
}
\end{align} 
We often denote a non-abelian extension as above simply by $\mathfrak{e}_U$ when the underlying short exact sequence is clear from the context.
\end{definition}

\begin{definition}\label{defn-nab-equiv}
Let $\mathfrak{e}_U$ and $\mathfrak{e}'_{U'}$ be two non-abelian extensions of $\mathfrak{g}_T$ by $\mathfrak{h}_S$. They are said to be {\bf equivalent} if there is a morphism $\varphi: \mathfrak{e}_U \rightarrow \mathfrak{e}'_{U'}$ of Rota-Baxter Lie algebras making the following diagram commutative
\begin{align}\label{abelian-equiv}
\xymatrix{
0 \ar[r] & \mathfrak{h}_S  \ar@{=}[d] \ar[r]^{i} & \mathfrak{e}_U \ar[d]^\varphi \ar[r]^{p} & \mathfrak{g}_T  \ar@{=}[d] \ar[r] & 0 \\
0 \ar[r] & \mathfrak{h}_S \ar[r]_{i'} & \mathfrak{e}'_{U'} \ar[r]_{p'} & \mathfrak{g}_T \ar[r] & 0.
}
\end{align}
We denote by $\mathrm{Ext}_{nab}(\mathfrak{g}_T, \mathfrak{h}_S)$ the set of all equivalence classes of non-abelian extensions of $\mathfrak{g}_T$ by $\mathfrak{h}_S$. 
\end{definition}

\medskip

\medskip

Let $\mathfrak{e}_U$ be a non-abelian extension of the Rota-Baxter Lie algebra $\mathfrak{g}_T$ by $\mathfrak{h}_S$ as of (\ref{abelian-diagram}). A section of $p$ is a linear map $s: \mathfrak{g} \rightarrow \mathfrak{e}$ that satisfies $p \circ s = \mathrm{id}_\mathfrak{g}$. A section of $p$ always exists. Given a section $s : \mathfrak{g} \rightarrow \mathfrak{e}$, we define maps
$\chi : \wedge^2 \mathfrak{g} \rightarrow \mathfrak{h}$, $\psi : \mathfrak{g} \rightarrow \mathrm{Der}(\mathfrak{h})$ and $\Phi : \mathfrak{g} \rightarrow \mathfrak{h}$ by 
\begin{align}\label{three-maps}
\begin{cases} \chi(x,y):=[s(x),s(y)]_{\mathfrak{e}}-s[x,y]_{\mathfrak{g}},\\
\psi_x(h):=[s(x),h]_{\mathfrak{e}},\\
\Phi(x):= U(s(x))-s(T(x)), \text{ for } x,y \in \mathfrak{g} \text{ and } h \in \mathfrak{h}.
\end{cases}
\end{align}
It has been observed in \cite{yael} that the maps $\chi$ and $\psi$ satisfy the following
\begin{align}
&\psi_{x}\psi_{y}(h)-\psi_{y}\psi_{x}(h)-\psi_{[x,y]_\mathfrak{g}}(h)=[\chi(x,y),h]_{\mathfrak{h}}, \label{cocycle-condition I}\\
&\psi_{x}\chi(y,z)+\psi_{y}\chi(z,x)+\psi_{z}\chi(x,y)-\chi([x,y]_\mathfrak{g},z)-\chi([y,z]_\mathfrak{g},x) - \chi([z,x]_\mathfrak{g},y)=0,\label{cocycle-condition II}
\end{align}
for $x,y,z \in \mathfrak{g}$ and $h \in \mathfrak{h}$. We claim that the above maps additionally satisfy 
\begin{align}
&\psi_{T(x)}S(h)=S\big(\psi_x S(h)+\psi_{T(x)}h \big)+S[\Phi(x),h]_{\mathfrak{h}}-[\Phi(x),S(h)]_{\mathfrak{h}}, \label{cocycle-condition III}\\
&\chi(T(x),T(y))-S\big(\chi(x,T(y))+\chi(T(x),y)\big)-\Phi\big([x, T(y)]_{\mathfrak{g}}+[T(x),y]_{\mathfrak{g}}\big) \label{cocycle-condition IV}\\
&\qquad +\psi_{T(x)}\Phi(y)-\psi_{T(y)}\Phi(x)-S\big(\psi_{x}\Phi(y)-\psi_{y}\Phi(x)\big)+[\Phi(x),\Phi(y)]_{\mathfrak{h}}=0, \nonumber
\end{align}
for $x, y \in \mathfrak{g}$ and $h \in \mathfrak{h}$. We first observe that
\begin{align*}
&\psi_{T(x)}S(h)-S\big(\psi_x S(h)+\psi_{T(x)}h\big)-S[\Phi(x),h]_{\mathfrak{h}}+[\Phi(x),S(h)]_{\mathfrak{h}}\\
&=\cancel{[sT(x),S(h)]_\mathfrak{e}}-S\big([s(x),S(h)]_\mathfrak{e} +[sT(x),h]_\mathfrak{e} \big)-S\big([Us(x),h]_\mathfrak{e} - [sT(x),h]_\mathfrak{e} \big)\\
& ~~~ +[Us(x),S(h)]_\mathfrak{e} -\cancel{[sT(x),S(h)]_\mathfrak{e}}\\
&=-U\big([s(x), U(h)]_\mathfrak{e} +\cancel{[sT(x),h]_\mathfrak{e}}\big)- U \big([Us(x),h]_\mathfrak{e} -\cancel{[sT(x),h]_\mathfrak{e}}\big)+[Us(x),U(h)]_\mathfrak{e} \quad (\mbox{since } U|_{\mathfrak{h}}=S)\\
&=[Us(x),U(h)]_\mathfrak{e} - U[s(x),U(h)]_\mathfrak{e} -U[Us(x),h]_\mathfrak{e} =~0 \qquad (\mbox{as }U \mbox{ is a Rota-Baxter operator}).
\end{align*}
Similarly, by a direct calculation we get
\begin{align*}
&\chi(T(x),T(y))-S\big(\chi(x,T(y))+\chi(T(x),y)\big)-\Phi\big([x, T(y)]_{\mathfrak{g}}+[T(x),y]_{\mathfrak{g}}\big) \nonumber \\\nonumber
& ~~~~ +\psi_{T(x)}\Phi(y)-\psi_{T(y)}\Phi(x)-S\big(\psi_{x}\Phi(y)-\psi_{y}\Phi(x)\big)+[\Phi(x),\Phi(y)]_{\mathfrak{h}} \nonumber \\
&=\cancel{[sT(x),sT(y)]_\mathfrak{e}}-\underbrace{s[T(x),T(y)]_\mathfrak{g}}_{(1A)}
-S\big(\underbrace{[s(x),sT(y)]_\mathfrak{e}}_{(2B)}-\underbrace{s[x,T(y)]_\mathfrak{g}}_{(2C)}\big)-S\big(\underbrace{[sT(x),s(y)]_\mathfrak{e}}_{(2A)}-\underbrace{s([T(x),y]_\mathfrak{g})}_{(2D)}\big) \nonumber \\
& ~~~~ -\underbrace{Us([x,T(y)]_\mathfrak{g})}_{(2C)}+\underbrace{sT([x,T(y)]_\mathfrak{g})}_{(1A)}-\underbrace{Us([T(x),y]_\mathfrak{g})}_{(2D)}+\underbrace{sT([T(x),y]_\mathfrak{g})}_{(1A)}+\cancel{[sT(x), Us(y)]_\mathfrak{e}} -\cancel{[sT(x),sT(y)]_\mathfrak{e}} \nonumber \\
& ~~~~ -\cancel{[sT(y), Us(x)]_\mathfrak{e}}+\cancel{[sT(y),sT(x)]_\mathfrak{e}}+S\big(\underbrace{[s(x),sT(y)]_\mathfrak{e}}_{(2B)}-\underbrace{[s(x), Us(y)]_\mathfrak{e}}_{(1B)}\big) \nonumber \\
& ~~~~ +S\big(\underbrace{[s(y), Us(x)]_\mathfrak{e}}_{(1B)}-\underbrace{[s(y),
sT(x)]_\mathfrak{e}}_{(2A)}\big)+\underbrace{[Us(x),Us(y)]_\mathfrak{e}}_{(1B)}-\cancel{[sT(x),Us(y)]_\mathfrak{e}}+\cancel{[sT(x),sT(y)]_\mathfrak{e}}-\cancel{[Us(x),sT(y)]_\mathfrak{e}}. \nonumber
\end{align*}
To see this entire expression vanishes, we observe the followings. The terms underlined with $(1A)$ cancelled as $T$ is a Rota-Baxter operator. On the other hand, since $S = U|_\mathfrak{h}$, the terms underlined with $(1B)$ can be written as 
$$[Us(x), Us(y)]_\mathfrak{e} - U \big([ Us(x),s(y)]_\mathfrak{e} + [s(x),Us(y)]_\mathfrak{e} \big)$$
which vanishes as $U$ is a Rota-Baxter operator on $\mathfrak{e}$. Finally, both the terms in each of the underlined expressions $(2A)$, $(2B)$, $(2C)$, and $(2D)$ cancel each other as $S = U|_\mathfrak{h}$. Thus, we prove our claims.

\medskip

\medskip

Note that the maps $\chi, \psi$ and $\Phi$ (defined in (\ref{three-maps})) depends on the choice of the section $s$. Let $s' : \mathfrak{g} \rightarrow \mathfrak{e}$ be any other section with the corresponding maps $\chi', \psi'$ and $\Phi'$. We define a map $\varphi : \mathfrak{g} \rightarrow \mathfrak{h}$ by $\varphi (x) := s (x) - s'(x)$, for $x \in \mathfrak{g}$. Then, for any $x,y \in \mathfrak{g}$ and $h \in \mathfrak{h}$, we have
\begin{align*}
\psi_{x}(h)-\psi^\prime_{x}(h)&=[s(x),h]_{\mathfrak{e}}-[s^\prime(x),h]_{\mathfrak{e}}\\
&=[\varphi(x),h]_{\mathfrak{h}}, \\
\chi(x,y)-\chi^{\prime}(x,y)&=([s(x),s(y)]_{\mathfrak{e}}-s[x,y]_{\mathfrak{g}})-([s^\prime(x),s^\prime(y)]_{\mathfrak{e}}-s^\prime[x,y]_{\mathfrak{g}})\\
&=([(\varphi+s^{\prime})(x),(\varphi+s^{\prime})(y)]_{\mathfrak{e}}-(\varphi+s^{\prime})[x,y]_{\mathfrak{g}})-([s^\prime(x),s^\prime(y)]_{\mathfrak{e}}-s^\prime[x,y]_{\mathfrak{g}})\\
&=\psi^\prime_{x}(\varphi(y))-\psi^\prime_{y}(\varphi(x))-\varphi([x,y]_{\mathfrak{g}})+[\varphi(x),\varphi(y)]_{\mathfrak{h}},\\
\Phi(x)-\Phi^\prime(x)&= Us(x)-sT(x)- U s^\prime(x)+ s^\prime T(x)\\
&=S(\varphi(x))-\varphi(T(x)).
\end{align*}

\medskip

Motivated by the above discussion, we now define the following definitions.

\begin{definition}
Let $\mathfrak{g}_T$ and $\mathfrak{h}_S$ be two Rota-Baxter Lie algebras. 

\medskip

$\bullet$ A {\bf non-abelian $2$-cocycle} of $\mathfrak{g}_T$ with values in $\mathfrak{h}_S$ is a triple $(\chi,\psi,\Phi)$ of linear maps $\chi: \wedge^2 \mathfrak{g}\rightarrow \mathfrak{h},$ $\psi:\mathfrak{g}\rightarrow \mathrm{Der}(\mathfrak{h})$ and $\Phi: \mathfrak{g}\rightarrow \mathfrak{h}$ satisfying the conditions (\ref{cocycle-condition I}), (\ref{cocycle-condition II}), (\ref{cocycle-condition III}) and (\ref{cocycle-condition IV}).

\medskip

$\bullet$ Let $(\chi,\psi,\Phi)$ and $(\chi',\psi',\Phi')$ be two non-abelian $2$-cocycles of the Rota-Baxter Lie algebra $\mathfrak{g}_T$ with values in $\mathfrak{h}_S$. 
They are said to be \textbf{equivalent} if there exists a linear map $\varphi:\mathfrak{g}\rightarrow\mathfrak{h}$ such that for any $x,y \in \mathfrak{g}$ and $h \in \mathfrak{h},$ the followings are hold:
\begin{align}
\psi_{x}(h)-\psi'_{x}(h)=&~[\varphi(x),h]_{\mathfrak{h}}, \label{equivalent I}\\
\chi(x,y)-\chi^{\prime}(x,y)=&~\psi^{\prime}_{x}(\varphi(y))-\psi^{\prime}_{y}(\varphi(x))-\varphi([x,y]_{\mathfrak{g}})+[\varphi(x),\varphi(y)]_{\mathfrak{h}}, \label{equivalent II}\\
\Phi(x)-\Phi^{\prime}(x)=&~S(\varphi(x))-\varphi(T(x)). \label{equivalent III}
\end{align}
\end{definition}

We denote the set of equivalence classes of non-abelian $2$-cocycles by $H^2_{nab}(\mathfrak{g}_T;\mathfrak{h}_S)$.
With the above notations, we get the following result.

\begin{proposition}\label{prop-ext-nab}
Let $\mathfrak{g}_T$ and $\mathfrak{h}_S$ be two Rota-Baxter Lie algebras. Then there is a well-defined map
\begin{align*}
\Upsilon : \mathrm{Ext}_{nab}(\mathfrak{g}_T, \mathfrak{h}_S) \longrightarrow H^2_{nab} (\mathfrak{g}_T, \mathfrak{h}_S).
\end{align*}
\end{proposition}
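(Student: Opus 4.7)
The plan is to define $\Upsilon$ by the construction already carried out in the preceding discussion and then verify it descends to equivalence classes on both sides. Given an extension $\mathfrak{e}_U$ as in (\ref{abelian-diagram}), choose a section $s : \mathfrak{g} \to \mathfrak{e}$ of $p$ and define the triple $(\chi, \psi, \Phi)$ by the formulas (\ref{three-maps}). The verifications of (\ref{cocycle-condition I})--(\ref{cocycle-condition IV}) in the text show that $(\chi, \psi, \Phi)$ is a non-abelian $2$-cocycle, so it determines a class in $H^2_{nab}(\mathfrak{g}_T, \mathfrak{h}_S)$. The tentative definition is then
\[
\Upsilon\bigl([\mathfrak{e}_U]\bigr) := [(\chi, \psi, \Phi)].
\]

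Well-definedness has two parts. First, the class must not depend on the chosen section $s$. This is exactly what the computation preceding the statement establishes: if $s'$ is another section with associated triple $(\chi', \psi', \Phi')$, then $\varphi := s - s'$ is a linear map $\mathfrak{g} \to \mathfrak{h}$ (since $p \circ \varphi = 0$), and the three identities displayed there are precisely the equivalence relations (\ref{equivalent I})--(\ref{equivalent III}). Hence $(\chi, \psi, \Phi)$ and $(\chi', \psi', \Phi')$ define the same class in $H^2_{nab}(\mathfrak{g}_T, \mathfrak{h}_S)$.

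Second, the class must not depend on the chosen representative in the equivalence class of the extension. Let $\mathfrak{e}_U$ and $\mathfrak{e}'_{U'}$ be equivalent extensions via a morphism $\varphi : \mathfrak{e}_U \to \mathfrak{e}'_{U'}$ as in (\ref{abelian-equiv}). Given a section $s$ of $p$, the map $\widetilde{s} := \varphi \circ s$ is a section of $p'$ because $p' \circ \widetilde{s} = p' \varphi s = p s = \mathrm{id}_\mathfrak{g}$. Let $(\chi, \psi, \Phi)$ and $(\widetilde{\chi}, \widetilde{\psi}, \widetilde{\Phi})$ be the triples associated to $s$ and $\widetilde{s}$ respectively. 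Using that $\varphi$ is a morphism of Rota-Baxter Lie algebras (so $\varphi[\,,\,]_\mathfrak{e} = [\,,\,]_{\mathfrak{e}'} \circ (\varphi \otimes \varphi)$ and $U' \varphi = \varphi U$) and that $\varphi|_\mathfrak{h} = \mathrm{id}_\mathfrak{h}$ (from the left square of (\ref{abelian-equiv})), a direct comparison of (\ref{three-maps}) for the two sections yields $\widetilde{\chi} = \chi$, $\widetilde{\psi} = \psi$, and $\widetilde{\Phi} = \Phi$. By the first part of well-definedness applied inside $\mathfrak{e}'_{U'}$, the triple arising from any other section of $p'$ is equivalent to $(\widetilde{\chi}, \widetilde{\psi}, \widetilde{\Phi}) = (\chi, \psi, \Phi)$, so $\Upsilon([\mathfrak{e}_U]) = \Upsilon([\mathfrak{e}'_{U'}])$.

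The only mildly tricky step is the last identification $\widetilde{\chi} = \chi$, etc., but it is routine once one writes the definitions out and uses $\varphi|_\mathfrak{h} = \mathrm{id}_\mathfrak{h}$ together with the compatibility $U' \varphi = \varphi U$; the remaining steps are formal. This completes the construction of the well-defined map $\Upsilon$.
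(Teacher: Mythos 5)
Your proposal is correct and follows essentially the same route as the paper: the paper's proof also transports a section $s$ of $p$ to the section $\varphi\circ s$ of $p'$, uses $\varphi|_{\mathfrak{h}}=\mathrm{id}_{\mathfrak{h}}$ and $U'\varphi=\varphi U$ to see the two triples coincide on the nose, and relies on the earlier computation with $\varphi=s-s'$ for independence of the chosen section. Your write-up merely makes that section-independence step explicit as a separate part of well-definedness, which is consistent with the paper's preceding discussion.
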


\begin{proof}
Let $\mathfrak{e}_U$ and $\mathfrak{e}'_{U'}$ be two equivalent extensions of $\mathfrak{g}_T$ by $\mathfrak{h}_S$ (see Definition \ref{defn-nab-equiv}). Let $s: \mathfrak{g} \rightarrow \mathfrak{e}$ be a section of the map $p$. Then we have $p' \circ (\varphi \circ s) = (p' \circ \varphi) \circ s = p \circ s = \mathrm{id}_\mathfrak{g}$ which shows that $s' = \varphi \circ s : \mathfrak{g} \rightarrow \mathfrak{e}'$ is a section of the map $p'$. Let $(\chi' , \psi', \Phi')$ be the non-abelian $2$-cocycle corresponding to the extension $\mathfrak{e}'_{U'}$ and section $p'$. Then for any $x, y \in \mathfrak{g}$ and $h \in \mathfrak{h}$, we have
\begin{align*}
\chi' (x,y) =~& [s' (x), s'(y)]_{\mathfrak{e}'} - s' [x,y]_\mathfrak{g} \\
=~& [\varphi \circ s (x), \varphi \circ s (y) ]_{\mathfrak{e}'} - \varphi \circ s [x, y]_\mathfrak{g} \\
=~& \varphi \big(   [s (x), s (y) ]_{\mathfrak{e}} - s [x,y]_\mathfrak{g}  \big) \\
=~& \chi (x,y ) \quad (\text{as } \varphi|_\mathfrak{h} = \mathrm{id}_\mathfrak{h}),
\end{align*}
\begin{align*}
\psi_x' (h) = [s'(x) , h]_{\mathfrak{e}'} =~& [\varphi \circ s(x) , h]_{\mathfrak{e}'} \\
=~& [\varphi (s(x)), \varphi (h)]_{\mathfrak{e}'} \\
=~& \varphi ([s(x), h]_\mathfrak{e}) = \varphi \big( \psi_x (h) \big) = \psi_x (h)
\end{align*}
and
\begin{align*}
\Phi'(x) =~& U' (s'(x)) - s' (T(x)) \\
=~& U' (\varphi \circ s (x)) - \varphi \circ s (T(x)) \\
=~& \varphi \big(  U (s(x)) - s (T(x))  \big) = \varphi (\Phi (x)) = \Phi (x).
\end{align*}
This shows that $(\chi, \psi, \Phi) = (\chi', \psi', \Phi')$. Hence their equivalence class in $H^2_{nab} (\mathfrak{g}_T, \mathfrak{h}_S)$ are the same. This shows that the map $\Upsilon : \mathrm{Ext}_{nab} (\mathfrak{g}_T, \mathfrak{h}_S) \rightarrow H^2_{nab} (\mathfrak{g}_T, \mathfrak{h}_S)$ assigning an equivalence class of extensions to the class of corresponding non-abelian $2$-cocycle is well-defined.
\end{proof}

\begin{proposition}\label{prop-nab-ext}
Let $\mathfrak{g}_T$ and $\mathfrak{h}_S$ be two Rota-Baxter Lie algebras. There is a well-defined map
\begin{align*}
\Omega : H^2_{nab} (\mathfrak{g}_T, \mathfrak{h}_S) \longrightarrow \mathrm{Ext}_{nab}(\mathfrak{g}_T, \mathfrak{h}_S).
\end{align*}
\end{proposition}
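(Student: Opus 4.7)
The plan is to construct an assignment at the cocycle level that reverses the one in Proposition \ref{prop-ext-nab}. Given a non-abelian $2$-cocycle $(\chi, \psi, \Phi)$ of $\mathfrak{g}_T$ with values in $\mathfrak{h}_S$, I would endow the direct sum $\mathfrak{e} := \mathfrak{g} \oplus \mathfrak{h}$ with the bilinear bracket
$$[(x, h), (y, k)]_\mathfrak{e} := \big([x, y]_\mathfrak{g},\ \psi_x(k) - \psi_y(h) + [h, k]_\mathfrak{h} + \chi(x, y)\big),$$
and define the linear endomorphism
$$U(x, h) := \big(T(x),\ S(h) + \Phi(x)\big).$$
Setting $i(h) := (0, h)$ and $p(x, h) := x$, I would claim that $0 \rightarrow \mathfrak{h}_S \xrightarrow{i} \mathfrak{e}_U \xrightarrow{p} \mathfrak{g}_T \rightarrow 0$ is a non-abelian extension of Rota-Baxter Lie algebras, and that the rule $[(\chi,\psi,\Phi)] \mapsto [\mathfrak{e}_U]$ is well-defined on equivalence classes.

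The verification splits naturally into two parts. Antisymmetry of the bracket is immediate from the antisymmetry of $\chi$, $[-,-]_\mathfrak{g}$ and $[-,-]_\mathfrak{h}$. The Jacobi identity, after cyclic expansion, decomposes into the Jacobi identities for $\mathfrak{g}$ and $\mathfrak{h}$, the fact that each $\psi_x$ is a derivation of $\mathfrak{h}$ (built into the definition of a non-abelian $2$-cocycle since $\psi : \mathfrak{g} \to \mathrm{Der}(\mathfrak{h})$), and the two cocycle conditions (\ref{cocycle-condition I}) and (\ref{cocycle-condition II}). For $U$ to be a Rota-Baxter operator on $\mathfrak{e}$, it suffices by bilinearity to verify the identity on three types of pairs: on $((0, h), (0, k))$ it reduces to $S$ being Rota-Baxter on $\mathfrak{h}$; on mixed pairs $((x, 0), (0, h))$ it reduces exactly to condition (\ref{cocycle-condition III}); and on $((x, 0), (y, 0))$ it becomes, after applying (\ref{cocycle-condition III}) to rewrite $\psi_{T(x)}S$ terms and using the Rota-Baxter identity for $S$, precisely condition (\ref{cocycle-condition IV}) read in reverse. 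This last reduction is the main computational obstacle, but it is essentially the calculation of Section 3 run backwards, with the cancellation pattern being the same. Finally, $i$ and $p$ intertwine the Rota-Baxter operators by construction: $U \circ i = i \circ S$ holds because $\Phi(0) = 0$, and $p \circ U = T \circ p$ is immediate.

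For well-definedness on equivalence classes, suppose $(\chi, \psi, \Phi) \sim (\chi', \psi', \Phi')$ via a linear map $\varphi : \mathfrak{g} \to \mathfrak{h}$ satisfying (\ref{equivalent I})--(\ref{equivalent III}). I would then define
$$\Theta : \mathfrak{e}_U \longrightarrow \mathfrak{e}'_{U'}, \qquad \Theta(x, h) := (x,\ h + \varphi(x)),$$
which is clearly a linear isomorphism restricting to the identity on $\mathfrak{h}$ and covering the identity on $\mathfrak{g}$. Compatibility of $\Theta$ with the brackets reduces, on expanding both sides, to (\ref{equivalent I}) (for the $\psi$-terms) together with (\ref{equivalent II}) (for the $\chi$-terms), while compatibility with the Rota-Baxter operators $U$ and $U'$ is exactly condition (\ref{equivalent III}). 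Hence $\Theta$ provides an equivalence of extensions in the sense of Definition \ref{defn-nab-equiv}, and the assignment descends to a well-defined map $\Omega : H^2_{nab}(\mathfrak{g}_T, \mathfrak{h}_S) \to \mathrm{Ext}_{nab}(\mathfrak{g}_T, \mathfrak{h}_S)$, as required. The hard part is bookkeeping: ensuring every sign and every use of $S = U|_\mathfrak{h}$ is tracked correctly so that the pairing of cocycle identities with Rota-Baxter/Jacobi identities is exact rather than merely suggestive.
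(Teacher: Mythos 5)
Your construction is exactly the paper's: the same bracket and operator $U_\Phi$ on $\mathfrak{g}\oplus\mathfrak{h}$, with the Rota-Baxter verification resting on conditions (\ref{cocycle-condition III}) and (\ref{cocycle-condition IV}) (the paper checks general pairs $((x,h),(y,k))$ in one computation, while you split by bilinearity of the Rota-Baxter defect into pure cases, which is a valid and slightly cleaner bookkeeping), and the same equivalence map $\Theta(x,h)=(x,h+\varphi(x))$ handled via (\ref{equivalent I})--(\ref{equivalent III}). One small slip in your description: the pure case $((x,0),(y,0))$ reduces directly to (\ref{cocycle-condition IV}), with no need of (\ref{cocycle-condition III}) or the Rota-Baxter identity for $S$, since no $S(h)$ or $S(k)$ terms arise there.
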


\begin{proof}
Let $(\chi, \psi, \Phi)$ be a non-abelian $2$-cocycle. Consider the vector space $\mathfrak{g} \oplus \mathfrak{h}$ together with the bilinear skew-symmetric bracket
\begin{align*}
[(x,h),(y,k)]_{\chi,\psi}:= \big( [x,y]_{\mathfrak{g}}, \psi_{x}(k)-\psi_{y}(h)+\chi(x,y)+[h,k]_{\mathfrak{h}} \big)
\end{align*}
and the linear map $U_\Phi : \mathfrak{g} \oplus \mathfrak{h} \rightarrow \mathfrak{g} \oplus \mathfrak{h}$ given by
\begin{align*}
U_\Phi ((x , h)) := (T(x) , S(h) + \Phi (x)),
\end{align*}
for $(x,h), (y,k) \in \mathfrak{g} \oplus \mathfrak{h}.$
Using the conditions (\ref{cocycle-condition I}) and (\ref{cocycle-condition II}), it can be easily verify that the bracket $[~,~]_{\chi, \psi}$ satisfies the Jacobi identity. (We denote this Lie algebra by $\mathfrak{g} \oplus_{\chi, \psi} \mathfrak{h}$). Moreover, we observe that
\begin{align*}
&[U_\Phi ((x , h)) , U_\Phi ((y , k)) ]_{\chi, \psi} \\
&= \big[ \big( T(x), S(h) + \Phi (x) \big), \big(T(y), S(k) + \Phi (y) \big) \big]_{\chi, \psi} \\
&= \big(  [T(x), T(y)]_\mathfrak{g}, ~ \underbrace{\psi_{T(x)} S(k)}_{(A1)} + \underbrace{\psi_{T(x)} \Phi (y)}_{(C1)}  \underbrace{- \psi_{T(y)} S(h)}_{(B1)}  \underbrace{- \psi_{T(y)} \Phi (x)}_{(C2)} + \underbrace{\chi (T(x), T(y))}_{(C3)} \\
& \qquad \qquad \qquad \qquad  \quad + [S(h), S(k)]_\mathfrak{h} + \underbrace{[S(h), \Phi (y)]_\mathfrak{h}}_{(B2)} + \underbrace{[\Phi(x), S(k)]_\mathfrak{h}}_{(A2)} + \underbrace{[\Phi (x), \Phi (y)]_\mathfrak{h}}_{(C4)} \big)\\
&= \bigg(   T[T(x),y]_\mathfrak{g} + T[x, T(y)]_\mathfrak{g} , ~ \underbrace{S (\psi_{x} S(k) + \psi_{T(x)} k) + S [\Phi (x), k]_\mathfrak{h}}_{= (A1) + (A2)} \\
& \quad \underbrace{- S (\psi_{y} S(h) + \psi_{T(y)} h) - S [\Phi (y), h]_\mathfrak{h}}_{=(B1) + (B2)} + S [S(h), k]_\mathfrak{h} + S [h, S(k)]_\mathfrak{h} \\
& \quad \underbrace{- S \big( \psi_{x} \Phi (y) - \psi_{y} \Phi (x) \big) + S \big( \chi (T(x), y) +  \chi (x, T(y) ) \big) + \Phi ([T(x), y]_\mathfrak{g} + [x, T(y)]_\mathfrak{g} }_{= (C1)+(C2)+(C3)+(C4)}\bigg)
\end{align*}

\begin{align*}
&= \bigg( T[T(x),y]_\mathfrak{g} ,~ S \big( \psi_{T(x)} k - \psi_{y} S(h) - \psi_{y} \Phi (x) + \chi (T(x), y) + [S(h) + \Phi (x), k]_\mathfrak{h})  \big) + \Phi [T(x), y]_\mathfrak{g}   \bigg) \\
& \quad + \bigg( T[x, T(y)]_\mathfrak{g}, ~ S \big( \psi_{x} S(k) + \psi_{x} \Phi (y) - \psi_{T(y)} h + \chi (x, T(y)) + [h, S (k)+ \Phi(y)]_\mathfrak{h} \big) + \Phi [x, T(y)]_\mathfrak{g} \bigg) \\   
&=  U_\Phi \big( [T(x),y]_\mathfrak{g}, \psi_{T(x)} k - \psi_{y} S(h) - \psi_{y} \Phi (x) + \chi (T(x), y) + [S(h) + \Phi (x), k]_\mathfrak{h})   \big) \\
& \quad + U_\Phi \big( [x, T(y)]_\mathfrak{g},  \psi_{x} S(k) + \psi_{x} \Phi (y) - \psi_{T(y)} h_ + \chi (x, T(y)) + [h, S (k)+ \Phi(y)]_\mathfrak{h}  \big) \\
&= U_\Phi \big( [ (T(x), S(h) + \Phi(x)), (y, k)]_{\chi, \psi} \big) ~+~ U_\Phi \big( [(x, h), (T(y), S(k) + \Phi (y))]_{\chi, \psi}   \big) \\
&= U_\Phi \big(  [U_\Phi ((x, h)), (y, k)]_{\chi, \psi} + [(x, h), U_\Phi ((y,k))]_{\chi, \psi}  \big).
\end{align*}
This shows that $U_\Phi$ is a Rota-Baxter operator on the Lie algebra $\mathfrak{g} \oplus_{\chi, \psi} \mathfrak{h}$. In other words, $(\mathfrak{g} \oplus_{\chi, \psi} \mathfrak{h})_{U_\Phi}$ is a Rota-Baxter Lie algebra. This is a non-abelian extension of $\mathfrak{g}_T$ by $\mathfrak{h}_S$ with the inclusion $i: \mathfrak{h}_S \rightarrow (\mathfrak{g} \oplus_{\chi, \psi} \mathfrak{h})_{U_\Phi}$ and projection $p : (\mathfrak{g} \oplus_{\chi, \psi} \mathfrak{h})_{U_\Phi} \rightarrow \mathfrak{g}_T$ as structure maps.

\medskip

Let $(\chi, \psi, \Phi)$ and $(\chi', \psi', \Phi')$ be two equivalent $2$-cocycles. Thus, there exists a linear map $\varphi : \mathfrak{g} \rightarrow \mathfrak{h}$ such that the identities (\ref{equivalent I}), (\ref{equivalent II}), (\ref{equivalent III}) hold. Let $(\mathfrak{g} \oplus_{\chi, \psi} \mathfrak{h})_{U_\Phi}$ and $(\mathfrak{g} \oplus_{\chi', \psi'} \mathfrak{h})_{U_{\Phi'}}$ be the Rota-Baxter Lie algebras induced by the $2$-cocycles $(\chi, \psi, \Phi)$ and $(\chi', \psi', \Phi')$, respectively. We define a map
\begin{align*}
\Theta : \mathfrak{g} \oplus \mathfrak{h} \rightarrow \mathfrak{g} \oplus \mathfrak{h} ~\text{ by } \Theta ((x,h)):= (x , \varphi (x) + h),~\text{for } (x, h) \in \mathfrak{g} \oplus \mathfrak{h}.
\end{align*}
Then we have
\begin{align*}
&\Theta [ (x , h), (y , k) ]_{\chi, \psi} \\
&= \Theta \big(  [x, y]_\mathfrak{g}, ~ \psi_{x} k - \psi_{y} h + \chi (x,y) + [h,k]_\mathfrak{h}  \big) \\
&= \big( [x,y]_\mathfrak{g}, \varphi [x,y]_\mathfrak{g} + \psi_{x} k - \psi_{y} h + \chi (x,y) + [h,k]_\mathfrak{h}  \big) \\
&= \big(   [x,y]_\mathfrak{g}, ~ \cancel{\varphi [x,y]_\mathfrak{g}} + \psi'_{x} k + [\varphi(x), k]_\mathfrak{h} - \psi'_{y} h - [\varphi (y), h]_\mathfrak{h} \\
& \quad + \chi' (x,y) + \psi'_{x} \varphi (y) - \psi'_{y} \varphi (x) - \cancel{\varphi [x,y]_\mathfrak{g}} + [\varphi (x), \varphi(y)]_\mathfrak{h} + [h,k]_\mathfrak{h} \big) \\
&= \big( [x,y]_\mathfrak{g}, ~ \psi'_{x} \varphi (y)    + \psi'_{x} k - \psi'_{y} \varphi (x) - \psi'_{y} h + \chi' (x, y) + [\varphi (x) + h, \varphi (y) + k ]_\mathfrak{h} \big) \\
& \qquad \quad (\text{after rearranging}) \\
&= [ ( x, \varphi(x) + h),( y, \varphi(y) + k )]_{\chi', \psi'} \\
&= [\Theta (x,h), \Theta (y,k)]_{\chi', \psi'}
\end{align*}
and
\begin{align*}
(U_{\Phi'} \circ \Theta)((x, h)) 
&= U_{\Phi'} ((x, \varphi (x) + h )) \\
&= (T(x) , S (\varphi (x)) + S(h) + \Phi' (x) )\\
&= (T(x) , \varphi (T(x)) + S (h) + \Phi (x) ) \qquad (\text{from } (\ref{equivalent III}))\\
&= \Theta (T(x), S(h) + \Phi (x)) = (\Theta \circ U_\Phi)((x,h)).
\end{align*}
This shows that the map $\Theta$ defines an equivalence of non-abelian extensions $(\mathfrak{g} \oplus_{\chi, \psi} \mathfrak{h})_{U_\Phi}$ and $(\mathfrak{g} \oplus_{\chi', \psi'} \mathfrak{h})_{U_{\Phi'}}$. Hence there is a well-defined map $\Omega : H^2_{nab} (\mathfrak{g}_T, \mathfrak{h}_S) \rightarrow \mathrm{Ext}_{nab} (\mathfrak{g}_T, \mathfrak{h}_S)$.
\end{proof}

It is straightforward but tedious to verify that the maps $\Upsilon$ and $\Omega$ are inverses to each other. 
As a consequence, we obtain the following classification result of non-abelian extensions.

\begin{theorem}
Let $\mathfrak{g}_T$ and $\mathfrak{h}_S$ be two Rota-Baxter Lie algebras. Then the set of equivalence classes of non-abelian extensions of $\mathfrak{g}_T$ by $\mathfrak{h}_S$ are classified by $H^2_{nab} (\mathfrak{g}_T, \mathfrak{h}_S)$. In other words,
\begin{align*}
\mathrm{Ext}_{nab} (\mathfrak{g}_T, \mathfrak{h}_S) ~\cong~ H^2_{nab} (\mathfrak{g}_T, \mathfrak{h}_S).
\end{align*}
\end{theorem}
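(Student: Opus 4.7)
The theorem reduces to verifying that the maps $\Upsilon$ of Proposition \ref{prop-ext-nab} and $\Omega$ of Proposition \ref{prop-nab-ext} are mutually inverse. The plan is therefore to check the two compositions separately, in each case by picking a canonical section and computing.

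To check $\Upsilon \circ \Omega = \mathrm{id}$, start with a non-abelian $2$-cocycle $(\chi, \psi, \Phi)$ and form the Rota-Baxter Lie algebra $(\mathfrak{g} \oplus_{\chi, \psi} \mathfrak{h})_{U_\Phi}$ from Proposition \ref{prop-nab-ext}. Equip the associated extension with the canonical section $s: \mathfrak{g} \to \mathfrak{g} \oplus \mathfrak{h}$, $s(x) = (x, 0)$. A direct computation using (\ref{three-maps}) gives
\[
[s(x), s(y)]_{\chi, \psi} - s[x, y]_\mathfrak{g} = (0, \chi(x, y)), \quad [s(x), (0, h)]_{\chi, \psi} = (0, \psi_x(h)), \quad U_\Phi(s(x)) - s(T(x)) = (0, \Phi(x)),
\]
so under the identification $\mathfrak{h} \cong \{0\} \oplus \mathfrak{h}$ the triple recovered by $\Upsilon$ is exactly $(\chi, \psi, \Phi)$.

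For the reverse composition $\Omega \circ \Upsilon = \mathrm{id}$, fix an extension $\mathfrak{e}_U$ and a section $s: \mathfrak{g} \to \mathfrak{e}$, producing the cocycle $(\chi, \psi, \Phi)$ via (\ref{three-maps}). The natural candidate equivalence is $\widetilde{\Theta}: (\mathfrak{g} \oplus_{\chi, \psi} \mathfrak{h})_{U_\Phi} \to \mathfrak{e}_U$ given by $\widetilde{\Theta}(x, h) := s(x) + i(h)$; this is a linear isomorphism because every $e \in \mathfrak{e}$ decomposes uniquely as $e = s(p(e)) + (e - s(p(e)))$ with the second summand in $\ker p = i(\mathfrak{h})$. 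Bracket preservation follows by expanding
\[
[\widetilde{\Theta}(x, h), \widetilde{\Theta}(y, k)]_\mathfrak{e} = [s(x), s(y)]_\mathfrak{e} + [s(x), k]_\mathfrak{e} + [h, s(y)]_\mathfrak{e} + [h, k]_\mathfrak{h}
\]
and reading each term through (\ref{three-maps}). Compatibility with the Rota-Baxter operators $U \circ \widetilde{\Theta} = \widetilde{\Theta} \circ U_\Phi$ reduces to $U(s(x)) = s(T(x)) + \Phi(x)$ together with $U|_\mathfrak{h} = S$, and commutativity with the inclusions and projections is immediate from the definition of $\widetilde{\Theta}$.

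The main obstacle is purely bookkeeping: one must keep track of the two distinct realizations of $\mathfrak{h}$ — as the subspace $i(\mathfrak{h}) \subset \mathfrak{e}$ on one side and as $\{0\} \oplus \mathfrak{h} \subset \mathfrak{g} \oplus \mathfrak{h}$ on the other — while expanding every bracket term by term. No fresh appeal to the cocycle identities (\ref{cocycle-condition I})--(\ref{cocycle-condition IV}) or the equivalence relations (\ref{equivalent I})--(\ref{equivalent III}) is required at this step, as those were already absorbed in the well-definedness statements of $\Omega$ and $\Upsilon$; this confirms the author's description of the verification as straightforward but tedious.
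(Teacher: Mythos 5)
Your proposal is correct and follows exactly the route the paper intends: it verifies that the maps $\Upsilon$ of Proposition \ref{prop-ext-nab} and $\Omega$ of Proposition \ref{prop-nab-ext} are mutually inverse, a verification the paper merely asserts as ``straightforward but tedious.'' Your explicit computations with the canonical section $s(x)=(x,0)$ and the comparison map $\widetilde{\Theta}(x,h)=s(x)+i(h)$ supply precisely the omitted details and are accurate.
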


\medskip

\section{Inducibility of pair of automorphisms}\label{sec-4}
In this section, we consider the inducibility of a pair of Rota-Baxter automorphisms in a non-abelian extension of Rota-Baxter Lie algebras. Our main results (Theorem \ref{thm-inducibility} and Theorem \ref{thm-inducibility-2}) provide necessary and sufficient conditions for a pair of Rota-Baxter automorphisms to be inducible.

\medskip

Let $0 \rightarrow \mathfrak{h}_S \xrightarrow{i} \mathfrak{e}_U \xrightarrow{p} \mathfrak{g}_T \rightarrow 0$ be a non-abelian extension of Rota-Baxter Lie algebras. Let $\mathrm{Aut}_\mathfrak{h} (\mathfrak{e}_U)$ be the set of all Rota-Baxter automorphisms $\gamma \in \mathrm{Aut}(\mathfrak{e}_U)$ that satisfies $\gamma|_\mathfrak{h} \subset \mathfrak{h}$. For any $\gamma \in \mathrm{Aut}_\mathfrak{h} (\mathfrak{e}_U)$, it follows that $\gamma|_\mathfrak{h} \in \mathrm{Aut}(\mathfrak{h}_S)$. We can also define a map $\overline{\gamma} : \mathfrak{g} \rightarrow \mathfrak{g}$ by
\begin{align*}
\overline{\gamma} (x) := p \gamma s (x), \text{ for } x \in \mathfrak{g}.
\end{align*}
One can easily show that $\overline{\gamma}$ is independent of the choice of $s$.
Note that the space $\mathfrak{g}$ can be regarded as a subspace of $\mathfrak{e}$ via the section $s$. In fact, $\mathfrak{e}$ is isomorphic to $\mathfrak{h} \oplus s (\mathfrak{g})$. With this notation, the map $p$ is simply the projection onto the subspace $\mathfrak{g}$. Since $\gamma$ is an automorphism on $\mathfrak{e}$ preserving the space $\mathfrak{h}$, it also preserves the subspace $\mathfrak{g}$. Therefore, the map $\overline{\gamma} = p \gamma s$ is bijective on $\mathfrak{g}$.

For any $x,y \in \mathfrak{g}$, we have
\begin{align*}
\overline{\gamma} ([x,y]_\mathfrak{g}) = p \gamma (s [x,y]_\mathfrak{g}) =~& p \gamma ( [s(x), s(y)]_\mathfrak{e} - \chi (x,y) ) \\
=~& p \gamma ( [s(x), s(y)]_\mathfrak{e} ) \quad (\text{as } \gamma|_\mathfrak{h} \subset \mathfrak{h} \text{ and } p|_\mathfrak{h} = 0) \\
=~& [p \gamma s (x), p \gamma s (y)]_\mathfrak{g}  = [\overline{\gamma} (x), \overline{\gamma} (y)]_\mathfrak{g}
\end{align*}
and
\begin{align*}
(T \overline{\gamma} - \overline{\gamma} T)(x) =~& (Tp \gamma s - p \gamma s T)(x) \\
=~& (p U \gamma s - p \gamma s T)(x) \quad (\text{as } Tp = pU) \\
=~& p \gamma (Us - sT)(x) \quad (\text{as } U \gamma = \gamma U) \\
=~& 0 \quad (\text{as } (Us -sT)(x) \in \mathfrak{h}, ~ \gamma|_\mathfrak{h} \subset \mathfrak{h} \text{ and } p|_\mathfrak{h} = 0).
\end{align*}
This shows that $\overline{\gamma} : \mathfrak{g}_T \rightarrow \mathfrak{g}_T$ is a Rota-Baxter automorphism, that is, $\overline{\gamma} \in \mathrm{Aut} (\mathfrak{g}_T)$. This construction gives rise to a group homomorphism
\begin{align*}
\tau : \mathrm{Aut}_\mathfrak{h} (\mathfrak{e}_U) \rightarrow \mathrm{Aut}(\mathfrak{h}_S) \times \mathrm{Aut}(\mathfrak{g}_T), ~~ \tau (\gamma) := (\gamma|_\mathfrak{h}, \overline{ \gamma}).
\end{align*}
A pair of Rota-Baxter automorphisms $(\beta, \alpha) \in \mathrm{Aut}(\mathfrak{h}_S) \times \mathrm{Aut}(\mathfrak{g}_T)$ is said to be {\bf inducible} if the pair $(\beta, \alpha)$ lies in the image of $\tau$. 

\medskip

It is then natural to ask the following question:

\medskip

\medskip

\noindent {\bf Question A.} When a pair of Rota-Baxter automorphisms $(\beta, \alpha) \in \mathrm{Aut}(\mathfrak{h}_S) \times \mathrm{Aut}(\mathfrak{g}_T)$ is inducible?

\medskip

\medskip

In the following result, we find a necessary and sufficient condition to answer this question.

\begin{theorem}\label{thm-inducibility}
Let $0 \rightarrow \mathfrak{h}_S \xrightarrow{i} \mathfrak{e}_U \xrightarrow{p} \mathfrak{g}_T \rightarrow 0$ be a non-abelian extension of Rota-Baxter Lie algebras and $(\chi, \psi, \Phi)$ be the corresponding non-abelian $2$-cocycle induced by a section $s$. A pair $(\beta, \alpha) \in \mathrm{Aut}(\mathfrak{h}_S) \times \mathrm{Aut}(\mathfrak{g}_T)$ is inducible if and only if there exists a linear map $\lambda \in \mathrm{Hom}(\mathfrak{g}, \mathfrak{h})$ satisfying the followings:
\begin{itemize}
\item[(I)] $\beta (\psi_x h) - \psi_{\alpha (x)} \beta (h) = [\lambda (x), \beta (h)]_\mathfrak{h},$
 \item[(II)] $\beta (\chi (x, y)) - \chi (\alpha(x), \alpha(y)) = \psi_{\alpha (x)} \lambda(y) - \psi_{\alpha (y)} \lambda(x)  - \lambda ([x, y]_\mathfrak{g}) + [\lambda (x) , \lambda(y)]_\mathfrak{h},$
 \item[(III)] $\beta (\Phi (x)) - \Phi (\alpha(x)) = S (\lambda(x)) - \lambda (T(x)),$ for $x, y \in \mathfrak{g}$ and $h \in \mathfrak{h}$.
\end{itemize}
\end{theorem}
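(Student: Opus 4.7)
The plan is to identify a Rota-Baxter automorphism $\gamma \in \mathrm{Aut}_\mathfrak{h}(\mathfrak{e}_U)$ with a triple $(\beta,\alpha,\lambda)$ via the section $s$. Writing every element of $\mathfrak{e}$ uniquely as $h + s(x)$ with $h\in\mathfrak{h}$, $x\in\mathfrak{g}$, the condition $\gamma|_\mathfrak{h}\subset\mathfrak{h}$ together with $p\circ \gamma\circ s = \alpha$ forces $\gamma(s(x)) - s(\alpha(x)) \in \ker p = \mathfrak{h}$. Thus the natural ansatz is
\[
\gamma(h + s(x)) \;=\; \beta(h) + s(\alpha(x)) + \lambda(x),
\]
for some linear map $\lambda:\mathfrak{g}\to\mathfrak{h}$. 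I will show that such a $\gamma$ is a Rota-Baxter automorphism inducing $(\beta,\alpha)$ if and only if $\lambda$ satisfies (I), (II), (III).

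For the forward direction, assume $(\beta,\alpha)$ is inducible by some $\gamma$, and define $\lambda(x) := \gamma(s(x)) - s(\alpha(x))$. To extract (I) and (II), I will compute both sides of $\gamma([h+s(x), k+s(y)]_\mathfrak{e}) = [\gamma(h+s(x)), \gamma(k+s(y))]_\mathfrak{e}$ using the formulas \eqref{three-maps} to expand brackets in $\mathfrak{e}$ (writing $[s(x),s(y)]_\mathfrak{e}=s[x,y]_\mathfrak{g}+\chi(x,y)$ and $[s(x),h]_\mathfrak{e}=\psi_x(h)$), then separate the $\mathfrak{g}$-component (which gives nothing new, since $\alpha$ is a Lie algebra morphism) from the $\mathfrak{h}$-component. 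Specializing to $x=0$ (resp.\ $y=0$) in the $\mathfrak{h}$-component isolates (I); setting $h=k=0$ isolates (II). For (III), I will compare $\gamma(U(s(x)))$ with $U(\gamma(s(x)))$; using $U(s(x)) = s(T(x)) + \Phi(x)$ together with $\alpha T = T\alpha$ and $\beta S = S\beta$, all the $\mathfrak{g}$-components match, and the $\mathfrak{h}$-part yields exactly (III).

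For the converse, given $\lambda$ satisfying (I), (II), (III), define $\gamma$ by the ansatz above. It is clearly a linear bijection with $\gamma|_\mathfrak{h}=\beta$ and $p\gamma s=\alpha$. I will verify that $\gamma$ is a Lie algebra homomorphism by essentially reversing the computation above: using (I) to move $\beta$ past $\psi$, using (II) to transform $\beta\circ\chi$ into $\chi\circ(\alpha\otimes\alpha)$ modulo the $\lambda$-correction, and using that $\beta$, $\alpha$ are themselves Lie algebra morphisms. Then I will verify $U\gamma=\gamma U$ by a similar but shorter computation, whose only nontrivial input is (III).

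The only real obstacle is bookkeeping in the Lie-bracket check, where many $\psi$-, $\chi$- and bracket terms must be matched; the Rota-Baxter check is essentially a one-line rearrangement. The $2$-cocycle conditions \eqref{cocycle-condition I}--\eqref{cocycle-condition IV} do not enter directly in this equivalence — they enter only in ensuring that $(\chi,\psi,\Phi)$ actually defines the extension — so no additional structural identities are needed beyond (I), (II), (III).
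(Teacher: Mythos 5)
Your proposal is correct and follows essentially the same route as the paper: in the forward direction the paper also sets $\lambda := \gamma s - s\alpha$ (well-defined into $\mathfrak{h}$ since $p\gamma s = \alpha$) and verifies (I)--(III) by the same bracket and $U$-computations you describe, and in the converse it defines $\gamma(h+s(x)) = \beta(h)+\lambda(x)+s(\alpha(x))$ and checks bijectivity, the Lie homomorphism property, and $U\gamma=\gamma U$ exactly as you outline. The only cosmetic difference is that you extract (I) and (II) by specializing the single identity $\gamma([e_1,e_2]_\mathfrak{e})=[\gamma(e_1),\gamma(e_2)]_\mathfrak{e}$, while the paper verifies each identity directly; the underlying computations coincide.
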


\begin{proof}
Suppose the pair $(\beta, \alpha) \in \mathrm{Aut}(\mathfrak{h}_S) \times \mathrm{Aut}(\mathfrak{g}_T)$ is inducible. Thus, there exists an automorphism $\gamma \in \mathrm{Aut}_{\mathfrak{h}} (\mathfrak{e}_U)$ such that $\gamma|_{\mathfrak{h}} = \beta$ and $p \gamma s = \alpha$. For any $x \in \mathfrak{g}$, we observe that
\begin{align*}
p \big(  (\gamma s - s \alpha)(x)  \big) = \alpha (x) - \alpha (x) = 0 ~~~~ (\text{as } ps = \mathrm{id}_\mathfrak{g}).
\end{align*} 
This shows that $(\gamma s - s \alpha)(x) \in \mathrm{ker }~ p = \mathrm{im }~ i \cong \mathfrak{h}$. We define a map $\lambda : \mathfrak{g} \rightarrow \mathfrak{h}$ by
$\lambda(x) := (\gamma s - s \alpha)(x), \text{ for } x \in \mathfrak{g}.$
Then we have
\begin{align*}
\beta (\psi_x h)- \psi_{\alpha (x)} \beta (h) =~& \beta ( [s(x), h]_\mathfrak{e}) - [s \alpha (x), \beta (h)]_\mathfrak{e} \\
=~& [\gamma s (x), \gamma (h)]_\mathfrak{e} - [s \alpha (x), \beta (h)]_\mathfrak{e} \quad (\text{as } \beta = \gamma|_\mathfrak{h})\\
=~& [\gamma s (x), \beta (h)]_\mathfrak{e} - [s \alpha (x), \beta (h)]_\mathfrak{e} \\
=~& [\lambda (x), \beta (h)]_\mathfrak{h}.
\end{align*}
Hence we get the identity (I). Next, we observe that
\begin{align*}
&\psi_{\alpha (x)} \lambda(y) - \psi_{\alpha (y)} \lambda(x) - \lambda([x, y]_\mathfrak{g}) + [\lambda (x), \lambda (y)]_\mathfrak{h} \\
&= [s \alpha (x) , (\gamma s - s \alpha)(y)]_\mathfrak{e} - [s \alpha (y), (\gamma s - s \alpha)(x)]_\mathfrak{e} - (\gamma s - s \alpha)([x, y]_\mathfrak{g}) + [(\gamma s - s \alpha)(x), (\gamma s - s \alpha)(y)]_\mathfrak{h} \\
&= [s \alpha (x), \gamma s (y)]_\mathfrak{e} - [s \alpha (x) , s \alpha (y)]_\mathfrak{e} - [s \alpha (y), \gamma s (x)]_\mathfrak{e} + [s \alpha (y), s \alpha(x)]_\mathfrak{e} - \gamma s ([x, y]_\mathfrak{g}) + s [\alpha (x), \alpha(y)]_\mathfrak{g} \\
& ~~ + [\gamma s (x), \gamma s (y)]_\mathfrak{e} - [\gamma s (x), s \alpha (y)]_\mathfrak{e} - [s \alpha (x), \gamma s (y)]_\mathfrak{e} + [s \alpha (x), s \alpha(y)]_\mathfrak{e} \\
&= \big( [\gamma s (x), \gamma s (y)]_\mathfrak{e} - \gamma s ([x, y]_\mathfrak{g}) \big) + [s \alpha (y), s \alpha (x)]_\mathfrak{e} + s [\alpha(x), \alpha (y)]_\mathfrak{g} \quad (\text{after cancellation and rearranging}) \\
&= \gamma \big(  [s(x), s(y)]_\mathfrak{e} - s [x, y]_\mathfrak{g} \big)  - \big( [s\alpha (x) , s \alpha(y)]_\mathfrak{e} - s [\alpha (x), \alpha(y)]_\mathfrak{g}  \big) \\
&= \beta \big(  [s(x), s(y)]_\mathfrak{e} - s [x, y]_\mathfrak{g} \big) - \big( [s\alpha (x) , s \alpha(y)]_\mathfrak{e} - s [\alpha (x), \alpha(y)]_\mathfrak{g}  \big) \\
&= \beta (\chi (x, y))- \chi (\alpha (x), \alpha (y)). 
\end{align*}
This proves the identity (II). Finally, the identity (III) follows as
\begin{align*}
&\beta (\Phi (x)) - \Phi (\alpha (x)) \\
&= \beta \big( U(s(x)) - s (T(x)) \big) - \big( U (s \alpha (x)) - s (T \alpha (x)) \big) \\
&= \gamma \big( U(s(x)) - s (T(x)) \big) -  U (s \alpha (x)) + s (T \alpha (x)) \quad (\text{as } \beta = \gamma|_\mathfrak{h}) \\
&= U \gamma (s(x)) - \gamma s (T(x)) - U (s \alpha (x)) + s ( \alpha T (x))  \quad (\text{as } U \gamma = \gamma U \text{ and } T \alpha = \alpha T) \\
&= U ((\gamma s - s \alpha)(x)) - (\gamma s - s \alpha)(T(x)) \\
&=U(\lambda(x)) - \lambda (T(x)) = S (\lambda(x)) - \lambda (T(x)). 
\end{align*}

Conversely, suppose there exists a linear map $\lambda \in \mathrm{Hom}(\mathfrak{g}, \mathfrak{h})$ satisfying (I), (II) and (III). Since $s$ is a section of the map $p$, it follows that any element $e \in \mathfrak{e}$ can be written as $e = h + s(x)$, for some $h \in \mathfrak{h}$ and $x \in \mathfrak{g}$. We define a map $\gamma : \mathfrak{e} \rightarrow \mathfrak{e}$ by
\begin{align}\label{g-defn}
\gamma (e) = \gamma (h + s(x)) := \big(  \beta (h) + \lambda (x) \big) + s (\alpha (x)).
\end{align}
We claim that $\gamma$ is bijective. If $\gamma (h + s(x)) = 0$ then it follows from (\ref{g-defn}) that $s (\alpha (x)) = 0$. Since $s$ and $\alpha$ are both injective, we have $x = 0$. Using this is (\ref{g-defn}), we obtain $\beta (h) = 0$ which implies $h = 0$. Therefore, we get $h + s(x) = 0$. This proves that $\gamma$ is injective. Finally, for any element $e = h + s(x) \in \mathfrak{e}$, we consider the element $e' = \big( \beta^{-1}(h) - \beta^{-1}\lambda \alpha^{-1} (x)  \big) + s(\alpha^{-1}(x)) \in \mathfrak{e}.$ Then we have 
\begin{align*}
\gamma (e') =  \big( \beta \beta^{-1} (h) - \beta \beta^{-1} \lambda \alpha^{-1} (x) + \lambda \alpha^{-1} (x) \big) + s(\alpha \alpha^{-1}(x))  = h + s(x) = e.
\end{align*}
This proves that $\gamma$ is surjective. Hence, we proved our claim. Next, for any two elements $e_1 = h_1 + s(x_1)$ and $e_2 = h_2 + s(x_2)$ of the vector space $\mathfrak{e}$, we have
\begin{align*}
&[\gamma (e_1), \gamma (e_2)]_\mathfrak{e} \\
&= [\gamma (h_1 + s(x_1)), \gamma (h_2 + s(x_2)) ]_\mathfrak{e} \\
&= [ \beta (h_1) + \gamma (x_1) + s (\alpha (x_1)) ~,~ \beta (h_2) + \gamma (x_2) + s (\alpha (x_2))  ]_\mathfrak{e}  \\
&= [ \beta (h_1), \beta (h_2)  ]_\mathfrak{e} + \underbrace{[\beta (h_1) , \lambda(x_2) ]_\mathfrak{e}}_A + \underbrace{[ \beta (h_1) , s (\alpha (x_2)) ]_\mathfrak{e}}_B + \underbrace{[ \lambda (x_1), \beta(h_2) ]_\mathfrak{e}}_C + \underbrace{[ \lambda (x_1), \lambda(x_2)]_\mathfrak{e}}_D \\
& ~~~ + \underbrace{[ \lambda (x_1), s (\alpha (x_2))  ]_\mathfrak{e}}_E + \underbrace{[ s(\alpha (x_1)), \beta (h_2)  ]_\mathfrak{e}}_F + \underbrace{[  s (\alpha (x_1)), \lambda (x_2)  ]_\mathfrak{e}}_G + \underbrace{[ s (\alpha (x_1)) ,   s (\alpha (x_2))]_\mathfrak{e}}_H 
\end{align*}
\begin{align*}
&= [ \beta (h_1), \beta (h_2)  ]_\mathfrak{e} \underbrace{- \beta (\psi_{x_2} h_1) + \psi_{\alpha (x_2)} \beta (h_1)}_A  \underbrace{ - \psi_{\alpha (x_2)} \beta (h_1)}_B + \underbrace{\beta (\psi_{x_1} h_2) - \psi_{\alpha (x_1)} \beta (h_2)}_C \\
& ~~~ + \underbrace{ \beta (\chi (x_1, x_2)) - \chi (\alpha (x_1), \alpha (x_2)) - \psi_{\alpha (x_1) \lambda (x_2)} + \psi_{\alpha (x_2)} \lambda (x_1) + \lambda ([x_1, x_2]_\mathfrak{g}) }_D \underbrace{- \psi_{\alpha (x_2)} \lambda (x_1)}_E \\
& ~~~ + \underbrace{\psi_{\alpha (x_1)} \beta (h_2)}_F + \underbrace{\psi_{\alpha (x_1)} \lambda (x_2)}_G + \underbrace{ \chi (\alpha (x_1), \alpha (x_2)) + s [\alpha (x_1), \alpha (x_2)]_\mathfrak{g}}_H \\
&= \beta \big(  [h_1, h_2]_\mathfrak{h} - \psi_{x_2} h_1 + \psi_{x_1} h_2 + \chi (x_1, x_2) \big) + \lambda ([x_1, x_2]_\mathfrak{g}) + s [\alpha(x_1), \alpha (x_2)]_\mathfrak{g} \quad (\text{after cancellations})\\
&= \beta \big(  [h_1, h_2]_\mathfrak{h} + [h_1 , s(x_2)]_\mathfrak{e} + [s(x_1), h_2]_\mathfrak{e} + \chi (x_1, x_2) \big) + \lambda ([x_1, x_2]_\mathfrak{g}) + s(\alpha [x_1, x_2]_\mathfrak{g}) \\
&= \gamma \big(  [h_1, h_2]_\mathfrak{h} + [h_1, s(x_2)]_\mathfrak{e} + [s (x_1), h_2]_\mathfrak{e} + \chi (x_1, x_2)  + s [x_1, x_2]_\mathfrak{g} \big) \quad (\text{definition of } \gamma)\\
&= \gamma \big(  [h_1, h_2]_\mathfrak{h} + [h_1, s(x_2)]_\mathfrak{e} + [s (x_1), h_2]_\mathfrak{e} + [s(x_1), s(x_2)]_\mathfrak{e}  \big) \\
&= \gamma \big(  [h_1 + s(x_1), h_2 + s(x_2)]_\mathfrak{e} \big) = \gamma ([e_1, e_2]_\mathfrak{e}).
\end{align*}
Therefore, $\gamma : \mathfrak{e} \rightarrow \mathfrak{e}$ is a Lie algebra homomorphism. Moreover, 
\begin{align*}
(\gamma \circ U)(e) &= (\gamma \circ U) (h + s(x)) \\
&= \gamma \big(  S(h) + Us (x)  \big) \\
&= \gamma \big( S(h) + \Phi (x) + s T(x)  \big) \quad (\text{since } \Phi = Us -sT)\\
&= \beta (S(h) + \Phi (x)) + \lambda (T(x)) + s (\alpha T(x)) \quad (\text{from the definition of } \gamma) \\
&= \beta S (h) + \beta \Phi (x) + \lambda (T(x)) + s (T \alpha (x)) \\
&= \beta S (h) + \beta \Phi (x) + \lambda (T(x)) + U (s \alpha (x)) - \Phi (\alpha (x)) \quad (\text{since } \Phi = Us -sT)\\
&= S \beta (h) + \big( \beta \Phi (x) + \lambda (T(x)) - \Phi(\alpha (x))   \big) + U (s \alpha (x)) \\
&= S \beta (h) + S (\lambda (x)) + U (s \alpha (x)) \qquad \qquad (\text{from  (III)})\\
&= U \big(    \beta (h) + \lambda (x) + s (\alpha (x)) \big) \quad (\text{as } U|_\mathfrak{h} = S) \\
&= U \circ \gamma (h + s(x)) = (U \circ \gamma)(e).
\end{align*}
This proves that $\gamma : \mathfrak{e}_U \rightarrow \mathfrak{e}_U$ is an automorphism of the Rota-Baxter Lie algebra $\mathfrak{e}_U$. In other words, $\gamma \in \mathrm{Aut}(\mathfrak{e}_U)$. Since $\gamma|_\mathfrak{h} \subset \mathfrak{h}$, we have $\gamma \in \mathrm{Aut}_{\mathfrak{h}} (\mathfrak{e}_U)$. Finally, we have
\begin{align*}
\gamma (h) =~& \gamma (h + s(0)) = \beta (h), \text{ for } h \in \mathfrak{h}, ~ \text{ and }\\
(p \gamma s)(x) =~& p \gamma (0 + s(x)) = p (\lambda (x) + s(\alpha(x))) = ps (\alpha (x)) = \alpha (x), \text{ for } x \in \mathfrak{g}.
\end{align*}
Hence $\gamma|_\mathfrak{h} = \beta$ and $p \gamma s = \alpha$. This proves that the pair $(\beta, \alpha) \in \mathrm{Aut} (\mathfrak{h}_S) \times \mathrm{Aut} (\mathfrak{g}_T)$ is inducible.
\end{proof}


\medskip

\medskip

Let $0 \rightarrow \mathfrak{h}_S \xrightarrow{i} \mathfrak{e}_U \xrightarrow{p} \mathfrak{g}_T \rightarrow 0$ be a non-abelian extension of $\mathfrak{g}_T$ by $\mathfrak{h}_S$ and let $(\chi, \psi, \Phi)$ be the corresponding non-abelian $2$-cocycle. For any $(\beta , \alpha) \in \mathrm{Aut}(\mathfrak{h}_S) \times \mathrm{Aut} (\mathfrak{g}_T)$, we define a triple $(\chi_{(\beta, \alpha)}, \psi_{(\beta, \alpha)}, \Phi_{(\beta, \alpha)})$ of linear maps $ \chi_{(\beta, \alpha)} : \wedge^2 \mathfrak{g} \rightarrow \mathfrak{h}$, ~$\psi_{( \beta, \alpha)} : \mathfrak{g} \rightarrow \mathrm{Der}(\mathfrak{h})$ and $\Phi_{(\beta, \alpha)} : \mathfrak{g} \rightarrow \mathfrak{h}$ by
\begin{align}\label{aut-2-defi}
\chi_{(\beta, \alpha)} (x, y) := \beta \circ \chi (\alpha^{-1}(x), \alpha^{-1}(y)), ~~~ (\psi_{(\beta, \alpha)})_x h = \beta (\psi_{\alpha^{-1}(x)} \beta^{-1}(h)) ~~~ \text{ and } ~~~ \Phi_{(\beta, \alpha)} (x) = \beta \Phi (\alpha^{-1} (x)),
\end{align}
for $x, y \in \mathfrak{g}$ and $h \in \mathfrak{h}$. Then we have the following.

\begin{proposition}
The triple $(\chi_{(\beta, \alpha)}, \psi_{(\beta, \alpha)}, \Phi_{(\beta, \alpha)})$ is a non-abelian $2$-cocycle.
\end{proposition}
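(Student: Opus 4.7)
The plan is to verify each of the four non-abelian cocycle conditions (\ref{cocycle-condition I})--(\ref{cocycle-condition IV}) for the new triple $(\chi_{(\beta,\alpha)}, \psi_{(\beta,\alpha)}, \Phi_{(\beta,\alpha)})$ by direct substitution, exploiting that $(\chi,\psi,\Phi)$ itself is a non-abelian $2$-cocycle and that $\alpha \in \mathrm{Aut}(\mathfrak{g}_T)$ and $\beta \in \mathrm{Aut}(\mathfrak{h}_S)$ are Rota-Baxter automorphisms. The key identities that will be used throughout are
\[
\alpha [x,y]_\mathfrak{g} = [\alpha(x),\alpha(y)]_\mathfrak{g}, \quad \alpha \circ T = T \circ \alpha, \quad \beta [h,k]_\mathfrak{h} = [\beta(h),\beta(k)]_\mathfrak{h}, \quad \beta \circ S = S \circ \beta,
\]
together with their inverse versions $\alpha^{-1}T = T\alpha^{-1}$ and $\beta^{-1}S = S\beta^{-1}$. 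Throughout, I would adopt the shorthand $x' := \alpha^{-1}(x)$, $y' := \alpha^{-1}(y)$, $z' := \alpha^{-1}(z)$ and $h' := \beta^{-1}(h)$ to keep expressions readable.

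For (\ref{cocycle-condition I}), I would compute $(\psi_{(\beta,\alpha)})_x (\psi_{(\beta,\alpha)})_y(h)$ as $\beta \psi_{x'} \psi_{y'}(h')$, and similarly for the remaining terms using $\alpha^{-1}[x,y]_\mathfrak{g} = [x',y']_\mathfrak{g}$. The left-hand side then equals $\beta\bigl(\psi_{x'}\psi_{y'}(h') - \psi_{y'}\psi_{x'}(h') - \psi_{[x',y']_\mathfrak{g}}(h')\bigr)$, which by (\ref{cocycle-condition I}) for $(\chi,\psi,\Phi)$ equals $\beta [\chi(x',y'),h']_\mathfrak{h} = [\beta\chi(x',y'),\beta h']_\mathfrak{h} = [\chi_{(\beta,\alpha)}(x,y), h]_\mathfrak{h}$. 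Condition (\ref{cocycle-condition II}) is analogous: each of the six summands transforms into $\beta$ applied to the corresponding summand for $(\chi,\psi)$ evaluated at $(x',y',z')$, and the sum vanishes by the original cocycle identity.

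For (\ref{cocycle-condition III}), I would unpack the four new terms one by one. The substitutions give $(\psi_{(\beta,\alpha)})_{T(x)} S(h) = \beta \psi_{T(x')} S(h')$ (using $\alpha^{-1}T = T\alpha^{-1}$ and $\beta^{-1}S = S\beta^{-1}$), and analogously $S\bigl((\psi_{(\beta,\alpha)})_x S(h) + (\psi_{(\beta,\alpha)})_{T(x)} h\bigr) = \beta S\bigl(\psi_{x'} S(h') + \psi_{T(x')} h'\bigr)$. The two bracket terms become $\beta S[\Phi(x'),h']_\mathfrak{h}$ and $\beta[\Phi(x'),S(h')]_\mathfrak{h}$ after pulling $\beta$ through the bracket and through $S$. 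Equating the whole expression to $\beta$ applied to the original (\ref{cocycle-condition III}) at $(x',h')$ gives the identity for the new triple.

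Condition (\ref{cocycle-condition IV}) is where the bookkeeping gets heaviest and will be the main obstacle; however, the mechanism is identical. I would rewrite each of the nine summands as $\beta$ applied to the corresponding term for the original cocycle at $(x',y')$, using in particular $\alpha^{-1}([x,T(y)]_\mathfrak{g} + [T(x),y]_\mathfrak{g}) = [x',T(y')]_\mathfrak{g} + [T(x'),y']_\mathfrak{g}$ (from $\alpha$ being a Rota-Baxter Lie algebra automorphism), together with $\beta \circ S = S \circ \beta$ and $\beta$ preserving the bracket on $\mathfrak{h}$. The expression collapses to $\beta$ applied to the left-hand side of (\ref{cocycle-condition IV}) for $(\chi,\psi,\Phi)$, which vanishes. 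Since $\beta$ is injective, the new identity holds. This completes the verification that $(\chi_{(\beta,\alpha)}, \psi_{(\beta,\alpha)}, \Phi_{(\beta,\alpha)})$ is a non-abelian $2$-cocycle.
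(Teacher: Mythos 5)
Your proposal is correct and follows essentially the same route as the paper: substitute $\alpha^{-1}(x),\alpha^{-1}(y),\beta^{-1}(h)$ into the cocycle identities for $(\chi,\psi,\Phi)$ and pull $\beta$ out using that $\alpha,\beta$ are Rota-Baxter automorphisms (so $\alpha T=T\alpha$, $\beta S=S\beta$, and both preserve the brackets). The paper writes out only condition (\ref{cocycle-condition I}) explicitly and leaves the others as analogous, whereas you sketch all four, but the mechanism is identical.
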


\begin{proof}
Since $(\chi, \psi, \Phi)$ is a non-abelian $2$-cocycle, the identities (\ref{cocycle-condition I}), (\ref{cocycle-condition II}), (\ref{cocycle-condition III}) and (\ref{cocycle-condition IV}) are hold. In these identities, if we replace $x, y, h$ by $\alpha^{-1}(x), \alpha^{-1} (y), \beta^{-1}(h)$ respectively, and use the definitions (\ref{aut-2-defi}), we obtain the corresponding identities for the triple $(\chi_{(\beta, \alpha)}, \psi_{(\beta, \alpha)}, \Phi_{(\beta, \alpha)})$. For example, it follows from (\ref{cocycle-condition I}) that
\begin{align*}
\beta \big( \psi_{\alpha^{-1} x} \psi_{\alpha^{-1} y} \beta^{-1} h ~-~ \psi_{\alpha^{-1} y} \psi_{\alpha^{-1} x} \beta^{-1} h ~-~ \psi_{[\alpha^{-1}(x), \alpha^{-1}(y)]_\mathfrak{g}} \beta^{-1} h \big)= \beta [\chi (\alpha^{-1}(x), \alpha^{-1}(y)), \beta^{-1}h]_\mathfrak{h}
\end{align*}
which can be written as
\begin{align*}
\beta \big(   \psi_{\alpha^{-1}x} \beta^{-1} (\psi_{(\beta, \alpha)})_y h ~-~ \psi_{\alpha^{-1}y} \beta^{-1} (\psi_{(\beta, \alpha)})_x h ~-~ \psi_{\alpha^{-1} [x,y]_\mathfrak{g}} \beta^{-1} h \big) = [\beta \chi (\alpha^{-1}(x), \alpha^{-1}(y)), h ]_\mathfrak{h}.
\end{align*}
This is same as 
\begin{align*}
(\psi_{(\beta, \alpha)})_x (\psi_{(\beta, \alpha)})_y h ~-~ (\psi_{(\beta, \alpha)})_y (\psi_{(\beta, \alpha)})_x h ~-~ (\psi_{(\beta, \alpha)})_{[x,y]_\mathfrak{g}} h = [\chi_{(\beta, \alpha)} (x,y), h]_\mathfrak{h}.
\end{align*}
This shows that the identity (\ref{cocycle-condition I}) follows for the triple $(\chi_{(\beta, \alpha)}, \psi_{(\beta, \alpha)}, \Phi_{(\beta, \alpha)})$. 

This completes the proof.
\end{proof}

With the above notations, Theorem \ref{thm-inducibility} can be rephrased as follows.

\begin{theorem}\label{thm-inducibility-2}
Let $0 \rightarrow \mathfrak{h}_S \xrightarrow{i} \mathfrak{e}_U \xrightarrow{p} \mathfrak{g}_T \rightarrow 0$ be a non-abelian extension of Rota-Baxter Lie algebras. A pair of automorphisms $(\beta, \alpha) \in \mathrm{Aut}(\mathfrak{h}_S) \times \mathrm{Aut}(\mathfrak{g}_T)$ is inducible if and only if the non-abelian $2$-cocycles $(\chi_{(\beta, \alpha)}, \psi_{(\beta, \alpha)}, \Phi_{(\beta, \alpha)})$ and $(\chi, \psi, \Phi)$ are equivalent, that is, they corresponds to the same element in $H^2_{nab}(\mathfrak{g}_T, \mathfrak{h}_S)$.
\end{theorem}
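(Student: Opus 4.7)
The plan is to deduce Theorem \ref{thm-inducibility-2} directly from Theorem \ref{thm-inducibility} by establishing a bijective correspondence between the data witnessing each characterization. Theorem \ref{thm-inducibility} asserts that $(\beta, \alpha)$ is inducible iff some $\lambda \in \mathrm{Hom}(\mathfrak{g}, \mathfrak{h})$ satisfies conditions (I)--(III), while the statement here requires the existence of some $\varphi \in \mathrm{Hom}(\mathfrak{g}, \mathfrak{h})$ witnessing the equivalence of $(\chi, \psi, \Phi)$ and $(\chi_{(\beta, \alpha)}, \psi_{(\beta, \alpha)}, \Phi_{(\beta, \alpha)})$ in the sense of (\ref{equivalent I})--(\ref{equivalent III}). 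The bijection I would exhibit is
\[
\lambda(x) := -\varphi(\alpha(x)), \qquad \text{equivalently} \qquad \varphi(x) := -\lambda(\alpha^{-1}(x)).
\]

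To carry this out, I would first unfold the three equivalence conditions with the explicit formulas from (\ref{aut-2-defi}) substituted. Then I would make the change of variable $x \mapsto \alpha(x)$, $y \mapsto \alpha(y)$, $h \mapsto \beta(h)$ (legitimate as $\alpha$ and $\beta$ are bijective) and multiply by $-1$. Under the ansatz $\lambda = -\varphi \circ \alpha$, condition (\ref{equivalent I}) becomes exactly (I); condition (\ref{equivalent III}) becomes exactly (III), using the commutation $T\alpha = \alpha T$ that comes from $\alpha \in \mathrm{Aut}(\mathfrak{g}_T)$.

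The translation of (\ref{equivalent II}) into (II) is the delicate step. After substitution, the right-hand side contains conjugated expressions of the form $\beta \psi_x \beta^{-1}\lambda(y)$ coming from $(\psi_{(\beta,\alpha)})_{\alpha(x)} \varphi(\alpha(y))$, which differ from the term $\psi_{\alpha(x)}\lambda(y)$ appearing in (II) by a commutator. To reconcile this, I would use the already-verified translation of (\ref{equivalent I}), rearranged as
\[
\beta \psi_x \beta^{-1}(k) = \psi_{\alpha(x)}(k) + [\lambda(x), k]_\mathfrak{h}, \qquad k \in \mathfrak{h},
\]
and apply it with $k = \lambda(y)$ and $k = \lambda(x)$. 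The extra commutator terms this produces combine with $[\varphi(\alpha(x)), \varphi(\alpha(y))]_\mathfrak{h} = [\lambda(x), \lambda(y)]_\mathfrak{h}$ to reproduce exactly the right-hand side of (II). Reading this computation backwards gives the reverse direction, so the correspondence $\lambda \leftrightarrow \varphi$ is a genuine bijection between solutions of the two systems.

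The main obstacle is exactly the step for (\ref{equivalent II}): the noncommutativity of $\beta$ with the action $\psi$ generates quadratic correction terms that must be absorbed against the $[\lambda(x), \lambda(y)]_\mathfrak{h}$ term in (II), and the signs have to line up. Once the correct sign convention $\lambda = -\varphi \circ \alpha$ is fixed and (\ref{equivalent I}) is used to simplify the conjugated $\psi$'s, the remaining work is a routine dictionary verification.
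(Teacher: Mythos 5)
Your argument is correct, and it is essentially the paper's own route: both directions are deduced from Theorem \ref{thm-inducibility} by the substitution dictionary $x \mapsto \alpha^{\pm 1}(x)$, $y \mapsto \alpha^{\pm 1}(y)$, $h \mapsto \beta^{\pm 1}(h)$, using $\alpha([x,y]_\mathfrak{g}) = [\alpha(x),\alpha(y)]_\mathfrak{g}$ and $T\alpha = \alpha T$. The only real difference is the orientation you pick in the (asymmetric-looking) equivalence relation (\ref{equivalent I})--(\ref{equivalent III}): you put the original cocycle $(\chi,\psi,\Phi)$ in the unprimed slot, so the twisted action $\psi_{(\beta,\alpha)}$ appears on the right-hand side of (\ref{equivalent II}) and has to be converted back to $\psi$ via the translated form of (\ref{equivalent I}); this is what generates the quadratic commutator corrections you absorb against $[\lambda(x),\lambda(y)]_\mathfrak{h}$, and it is also what accounts for the sign $\lambda = -\varphi\circ\alpha$. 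The paper makes the opposite choice, placing the twisted cocycle in the unprimed slot and taking $\varphi = \lambda\circ\alpha^{-1}$ (no sign): replacing $x,y,h$ by $\alpha^{-1}(x),\alpha^{-1}(y),\beta^{-1}(h)$ in (I)--(III) then turns them into (\ref{equivalent I})--(\ref{equivalent III}) verbatim, with $\psi' = \psi$ on the right, so no conjugation corrections arise at all. Thus the ``delicate step'' you flag is real under your convention but avoidable; since the equivalence relation is symmetric (with $-\varphi$ witnessing the reversed equivalence), both conventions prove the same statement, and your verification, including the remark that the computation reverses to give the converse, is sound.
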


\begin{proof}
Let the pair $(\beta, \alpha) \in \mathrm{Aut}(\mathfrak{h}_S) \times \mathrm{Aut}(\mathfrak{g}_T)$ be inducible. Then by Theorem \ref{thm-inducibility}, there exists a linear map $\lambda \in \mathrm{Hom} (\mathfrak{g}, \mathfrak{h})$ satisfying (I), (II) and (III). In these identities, replacing $x, y, h$ by $\alpha^{-1} (x), \alpha^{-1} (y), \beta^{-1} (h)$ respectively, we obtain
\begin{align*}
(\psi_{(\beta, \alpha)})_x h - \psi_x h =~& [\lambda \alpha^{-1} (x), h]_\mathfrak{h}, \\
\chi_{(\beta, \alpha)} (x,y) - \chi (x,y) =~& \psi_x \lambda \alpha^{-1} (y) - \psi_y \lambda \alpha^{-1} (x) - \lambda \alpha^{-1} ([x,y]_\mathfrak{g}) + [\lambda \alpha^{-1} (x), \lambda \alpha^{-1}(y)]_\mathfrak{h},\\
\Phi_{(\beta, \alpha)} (x) - \Phi (x) =~& S (\lambda \alpha^{-1} (x)) - \lambda \alpha^{-1} (T(x)).
\end{align*}
This shows that $(\chi_{(\beta, \alpha)}, \psi_{(\beta, \alpha)}, \Phi_{(\beta, \alpha)})$ and $(\chi, \psi, \Phi)$ are equivalent and the equivalence is given by the map $\lambda \alpha^{-1} : \mathfrak{g} \rightarrow \mathfrak{h}$.

Conversely, suppose that the non-abelian $2$-cocycles $(\chi_{(\beta, \alpha)}, \psi_{(\beta, \alpha)}, \Phi_{(\beta, \alpha)})$ and $(\chi, \psi, \Phi)$ are equivalent and the equivalence is given by a map $\varphi : \mathfrak{g} \rightarrow \mathfrak{h}$. Then it can be easily checked that the map $\lambda := \varphi \alpha : \mathfrak{g} \rightarrow \mathfrak{h}$ satisfies the conditions (I), (II), (III) of Theorem \ref{thm-inducibility}. Hence the pair $(\beta, \alpha) \in \mathrm{Aut} (\mathfrak{h}_S) \times \mathrm{Aut} (\mathfrak{g}_T)$ is inducible.
\end{proof}

\section{Wells exact sequence for Rota-Baxter Lie algebras}\label{sec-5}

In this section, we first define the Wells map associated with a non-abelian extension of Rota-Baxter Lie algebras. There is a close relationship between the inducibility of a pair of Rota-Baxter automorphisms and the image of the Wells map. Then we observe that the Wells map fits into a short exact sequence. Finally, we consider two other relevant short exact sequences.

\medskip

Let $0 \rightarrow \mathfrak{h}_S \xrightarrow{i} \mathfrak{e}_U \xrightarrow{p} \mathfrak{g}_T \rightarrow 0$ be a non-abelian extension of the Rota-Baxter Lie algebra $\mathfrak{g}_T$ by $\mathfrak{h}_S$. Let $(\chi, \psi, \Phi)$ be the corresponding non-abelian $2$-cocycle induced by a fixed section $s$. We define a set map $\mathcal{W} : \mathrm{Aut}(\mathfrak{h}_S) \times \mathrm{Aut} (\mathfrak{g}_T) \rightarrow H^2_{nab} (\mathfrak{g}_T, \mathfrak{h}_S)$ by
\begin{align}\label{wells-m}
\mathcal{W}((\beta, \alpha)) = [(\chi_{(\beta, \alpha)}, \psi_{(\beta, \alpha)}, \Phi_{(\beta, \alpha)}) - (\chi, \psi, \Phi)]
\end{align}
the equivalence class of $(\chi_{(\beta, \alpha)}, \psi_{(\beta, \alpha)}, \Phi_{(\beta, \alpha)}) - (\chi, \psi, \Phi)$. This map may not be a group homomorphism. The map $\mathcal{W}$ is called the {\bf Wells map}. 

\medskip

\begin{proposition}\label{wells-s-ind}
The Wells map $\mathcal{W}$ does not depend on the choice of section.
\end{proposition}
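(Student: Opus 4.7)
The plan is as follows. Given two sections $s$ and $s'$ of $p$, with associated non-abelian $2$-cocycles $(\chi, \psi, \Phi)$ and $(\chi', \psi', \Phi')$ respectively, I first invoke the computation preceding Definition~3.3: the linear map $\varphi : \mathfrak{g} \to \mathfrak{h}$ defined by $\varphi(x) := s(x) - s'(x)$ satisfies the equivalence conditions (\ref{equivalent I})--(\ref{equivalent III}), so that $(\chi, \psi, \Phi)$ and $(\chi', \psi', \Phi')$ already represent the same class in $H^2_{nab}(\mathfrak{g}_T, \mathfrak{h}_S)$. The goal is then to show that the two triples $(\chi_{(\beta,\alpha)} - \chi,\ \psi_{(\beta,\alpha)} - \psi,\ \Phi_{(\beta,\alpha)} - \Phi)$ and $(\chi'_{(\beta,\alpha)} - \chi',\ \psi'_{(\beta,\alpha)} - \psi',\ \Phi'_{(\beta,\alpha)} - \Phi')$ appearing in the definition (\ref{wells-m}) of $\mathcal{W}((\beta,\alpha))$ under the two choices of section represent the same class in $H^2_{nab}(\mathfrak{g}_T, \mathfrak{h}_S)$ in the sense of Definition~3.3.

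The key ingredient is the candidate equivalence map
$$\lambda(x) \ := \ \beta\bigl(\varphi(\alpha^{-1}(x))\bigr) \,-\, \varphi(x), \qquad x \in \mathfrak{g}.$$
I would check that $\lambda$ satisfies the three identities (\ref{equivalent I})--(\ref{equivalent III}) linking the above pair of difference-triples. The $\psi$-identity unwinds quickly: expanding $\beta \psi_{\alpha^{-1}(x)} \beta^{-1}(h) - \beta \psi'_{\alpha^{-1}(x)} \beta^{-1}(h)$ via (\ref{equivalent I}) at the argument $\alpha^{-1}(x), \beta^{-1}(h)$ and using that $\beta$ is a Lie algebra automorphism of $\mathfrak{h}$ gives exactly $[\lambda(x), h]_\mathfrak{h}$. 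The $\Phi$-identity is similar in spirit: invoking (\ref{equivalent III}) together with the intertwinings $\beta S = S \beta$ and $\alpha T = T \alpha$ collapses the difference of $\Phi$-differences into $S(\lambda(x)) - \lambda(T(x))$.

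The main obstacle is the $\chi$-identity, which involves the most terms. Applying (\ref{equivalent II}) at the arguments $(\alpha^{-1}(x), \alpha^{-1}(y))$ and conjugating by $\beta$ produces terms in $\beta \psi'_{\alpha^{-1}(x)} \beta^{-1}$ acting on $\beta(\varphi(\alpha^{-1}(y)))$, a $\beta$-image of $\varphi$ evaluated on $\alpha^{-1}[x,y]_\mathfrak{g}$, and a commutator in $\mathfrak{h}$. These must be combined with the corresponding ``unprimed'' terms coming from the original $\psi'$-action on $\varphi$ using the equivalence (\ref{equivalent I}), so that the full expression collapses into
$(\psi'_{(\beta,\alpha)})_x \lambda(y) - (\psi'_{(\beta,\alpha)})_y \lambda(x) - \lambda([x,y]_\mathfrak{g}) + [\lambda(x), \lambda(y)]_\mathfrak{h}$. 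The bookkeeping is delicate but self-contained; each term finds its partner once one systematically exploits that $\beta$ is a Lie homomorphism on $\mathfrak{h}$ and that $\alpha$ preserves $[\cdot, \cdot]_\mathfrak{g}$. Once all three identities are verified, the two difference-triples are equivalent, which shows that $\mathcal{W}((\beta,\alpha))$ is the same class regardless of whether it is computed from $s$ or from $s'$.
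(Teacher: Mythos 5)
Your choice of comparison map is exactly the one the paper uses ($\varphi=s-s'$ and $\lambda=\beta\varphi\alpha^{-1}-\varphi$), and your $\psi$- and $\Phi$-computations are correct. The gap is in the $\chi$-step, and it is not just bookkeeping. The equivalence relation (\ref{equivalent I})--(\ref{equivalent III}) is not linear in the cocycle data: it contains the terms $\psi'_x(\varphi(y))$ and $[\varphi(x),\varphi(y)]_\mathfrak{h}$, so you cannot verify it for the formal differences $(\chi_{(\beta,\alpha)}-\chi,\ \psi_{(\beta,\alpha)}-\psi,\ \Phi_{(\beta,\alpha)}-\Phi)$ by subtracting instances of it. Concretely, writing $a=\beta\varphi\alpha^{-1}$ and $b=\varphi$ and expanding both sides of your claimed identity
\begin{align*}
\big(\chi_{(\beta,\alpha)}-\chi\big)(x,y)-\big(\chi'_{(\beta,\alpha)}-\chi'\big)(x,y)
=(\psi'_{(\beta,\alpha)})_x\lambda(y)-(\psi'_{(\beta,\alpha)})_y\lambda(x)-\lambda([x,y]_\mathfrak{g})+[\lambda(x),\lambda(y)]_\mathfrak{h},
\end{align*}
using the known change-of-section formulas, one is left with the residue
\begin{align*}
\big((\psi'_{(\beta,\alpha)})_x b(y)-\psi'_x b(y)\big)-\big((\psi'_{(\beta,\alpha)})_y b(x)-\psi'_y b(x)\big)
+[a(x),b(y)]_\mathfrak{h}+[b(x),a(y)]_\mathfrak{h}-2[b(x),b(y)]_\mathfrak{h},
\end{align*}
and there is no identity available relating $\psi'_{(\beta,\alpha)}=\beta\,\psi'_{\alpha^{-1}(\cdot)}\,\beta^{-1}$ to $\psi'$ (such a relation is exactly what inducibility or compatibility of $(\beta,\alpha)$ would provide), so this does not vanish in general. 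Hence the announced ``collapse'' fails as stated, and the three identities you propose to check are not the right formulation of section-independence.

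What the proposition needs, and what the paper actually computes, is the following pair of equivalences of genuine non-abelian $2$-cocycles: (i) $(\chi,\psi,\Phi)\sim(\chi',\psi',\Phi')$ via $\varphi$ (the Section \ref{sec-3} computation you quote), and (ii) $(\chi_{(\beta,\alpha)},\psi_{(\beta,\alpha)},\Phi_{(\beta,\alpha)})\sim(\chi'_{(\beta,\alpha)},\psi'_{(\beta,\alpha)},\Phi'_{(\beta,\alpha)})$ via the map $\beta\varphi\alpha^{-1}$; for instance $(\psi_{(\beta,\alpha)})_x(h)-(\psi'_{(\beta,\alpha)})_x(h)=\beta[\varphi\alpha^{-1}(x),\beta^{-1}(h)]_\mathfrak{h}=[\beta\varphi\alpha^{-1}(x),h]_\mathfrak{h}$, and the $\chi$- and $\Phi$-parts follow similarly using $\beta S=S\beta$ and $T\alpha=\alpha T$. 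It is the computation (ii), conjugating the Section \ref{sec-3} relations by $(\beta,\alpha)$, that is missing from your proposal; once (i) and (ii) are in place, the class (\ref{wells-m}) computed from $s$ and from $s'$ coincide (the combined change being recorded by $\beta\varphi\alpha^{-1}-\varphi$, your $\lambda$), which is how the paper concludes.
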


\begin{proof}
Let $s'$ be any other section with the induced non-abelian $2$-cocycle $(\chi', \psi', \Phi')$. Then we have seen in Section \ref{sec-3} that the non-abelian $2$-cocycles $(\chi, \psi, \Phi)$ and $(\chi', \psi', \Phi')$ are equivalent, and equivalence is given by the map $\varphi := s - s'$. 

\medskip

On the other hand, for any $x \in \mathfrak{g}$ and $h \in \mathfrak{h}$,
\begin{align*}
(\psi_{(\beta, \alpha)})_x (h) - (\psi'_{(\beta, \alpha)})_x (h) =~& \beta (\psi_{\alpha^{-1} (x)} \beta^{-1}(h)) - \beta (\psi'_{\alpha^{-1} (x)} \beta^{-1}(h)) \\
=~& \beta [  \varphi \alpha^{-1} (x), \beta^{-1}(h)]_\mathfrak{h} = [\beta \varphi \alpha^{-1} (x), h]_\mathfrak{h}.
\end{align*}
Similarly, by straightforward calculations, we have
\begin{align*}
\chi_{(\beta, \alpha)} (x,y) - \chi'_{(\beta, \alpha)} (x,y) =~& (\psi'_{(\beta, \alpha)})_x (\beta \varphi \alpha^{-1} (y)) - (\psi'_{(\beta, \alpha)})_y (\beta \varphi \alpha^{-1} (x)) \\
~& \qquad \qquad \qquad - \beta \varphi \alpha^{-1} ([x,y]_\mathfrak{g}) + [\beta \varphi \alpha^{-1} (x), \beta \varphi \alpha^{-1} (y)]_\mathfrak{h},
\end{align*}
\begin{align*}
\Phi_{(\beta, \alpha)} (x) - \Phi'_{(\beta, \alpha)} (x) = S \big( \beta \varphi \alpha^{-1} (x)  \big) - \beta \varphi \alpha^{-1} (T(x)),
\end{align*}
for $x, y \in \mathfrak{g}$. This shows that the non-abelian $2$-cocycles $(\chi_{(\beta, \alpha)}, \psi_{(\beta, \alpha)}, \Phi_{(\beta, \alpha)})$ and  $(\chi'_{(\beta, \alpha)}, \psi'_{(\beta, \alpha)}, \Phi'_{(\beta, \alpha)})$ are equivalent, and equivalence is given by $\beta \varphi \alpha^{-1}$.

\medskip

Combining the results of the last two paragraphs, we have that the non-abelian $2$-cocycles 
\begin{align*}
(\chi_{(\beta, \alpha)}, \psi_{(\beta, \alpha)}, \Phi_{(\beta, \alpha)}) - (\chi, \psi, \Phi) \qquad \text{ and } \qquad (\chi'_{(\beta, \alpha)}, \psi'_{(\beta, \alpha)}, \Phi'_{(\beta, \alpha)}) - (\chi', \psi', \Phi')
\end{align*}
and equivalent, and equivalence is given by $\beta \varphi \alpha^{-1} - \varphi$. Hence they corresponds to the same element in $H^2_{nab} (\mathfrak{g}_T, \mathfrak{h}_S)$. This completes the proof.
\end{proof}

\medskip

\begin{remark}
(i) It follows from Theorem \ref{thm-inducibility-2} that a pair of Rota-Baxter automorphisms $(\beta, \alpha) \in \mathrm{Aut}(\mathfrak{h}_S) \times \mathrm{Aut}(\mathfrak{g}_T)$ is inducible if and only if $\mathcal{W}((\beta, \alpha))$ is trivial. In other words, $\mathcal{W}((\beta, \alpha))$ is an obstruction for inducibility of the pair $(\beta, \alpha)$.

\medskip

(ii) Let $0 \rightarrow \mathfrak{h}_S \xrightarrow{i} \mathfrak{e}_U \xrightarrow{p} \mathfrak{g}_T \rightarrow 0$ be a non-abelian extension of Rota-Baxter Lie algebras and let $[(\chi, \psi, \Phi)] \in H^2_{nab} (\mathfrak{g}_T, \mathfrak{h}_S)$ be the corresponding equivalence class. Note that the group $\mathrm{Aut}(\mathfrak{h}_S) \times \mathrm{Aut}(\mathfrak{g}_T)$ acts on the space $H^2_{nab} (\mathfrak{g}_T, \mathfrak{h}_S)$ by
\begin{align*}
(\beta, \alpha) \cdot [(\overline{\chi}, \overline{\psi}, \overline{\Phi})] = [(\overline{\chi}_{(\beta, \alpha)}, \overline{\psi}_{(\beta, \alpha)}, \overline{\Phi}_{(\beta, \alpha)})],
\end{align*}
for any $(\beta, \alpha) \in \mathrm{Aut}(\mathfrak{h}_S) \times \mathrm{Aut}(\mathfrak{g}_T)$ and $[(\overline{\chi}, \overline{\psi}, \overline{\Phi})] \in H^2_{nab} (\mathfrak{g}_T, \mathfrak{h}_S)$. With this notation, the Wells map $\mathcal{W} : \mathrm{Aut}(\mathfrak{h}_S) \times \mathrm{Aut}(\mathfrak{g}_T) \rightarrow H^2_{nab} (\mathfrak{g}_T, \mathfrak{h}_S)$ is given by
\begin{align*}
\mathcal{W} ((\beta, \alpha)) = (\beta, \alpha) \cdot [(\chi, \psi, \Phi)] - [(\chi, \psi, \Phi)],
\end{align*}
for $(\beta, \alpha) \in \mathrm{Aut}(\mathfrak{h}_S) \times \mathrm{Aut}(\mathfrak{g}_T).$ This shows that the Wells map can be seen as a principal crossed homomorphism in the group cohomology complex of $\mathrm{Aut}(\mathfrak{h}_S) \times \mathrm{Aut}(\mathfrak{g}_T)$ with values in $H^2_{nab} (\mathfrak{g}_T, \mathfrak{h}_S)$.
\end{remark}

In the next theorem, we show that the Wells map $\mathcal{W}$ fits into an exact sequence. This generalizes the classical Wells exact sequence known in the literature.

\begin{theorem}\label{wells-seq}
Let $0 \rightarrow \mathfrak{h}_S \xrightarrow{i} \mathfrak{e}_U \xrightarrow{p} \mathfrak{g}_T \rightarrow 0$ be a non-abelian extension of Rota-Baxter Lie algebras. Then there is an exact sequence
\begin{align*}
1 \rightarrow \mathrm{Aut}_\mathfrak{h}^{\mathfrak{h}, \mathfrak{g}} (\mathfrak{e}_U) \xrightarrow{\iota} \mathrm{Aut}_\mathfrak{h} (\mathfrak{e}_U) \xrightarrow{\tau} \mathrm{Aut}(\mathfrak{h}_S) \times \mathrm{Aut}(\mathfrak{g}_T) \xrightarrow{\mathcal{W}} H^2_{nab}(\mathfrak{g}_T, \mathfrak{h}_S).
\end{align*}
Here $\mathrm{Aut}_\mathfrak{h}^{\mathfrak{h}, \mathfrak{g}} (\mathfrak{e}_U) = \{ \gamma \in \mathrm{Aut} (\mathfrak{e}_U) ~|~ \tau(\gamma) = (\mathrm{id}_\mathfrak{h}, \mathrm{id}_\mathfrak{g}) \}$.
\end{theorem}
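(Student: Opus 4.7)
The plan is to verify exactness separately at each of the three non-trivial spots in the sequence, exploiting the fact that most of the heavy lifting has already been done in Theorems \ref{thm-inducibility} and \ref{thm-inducibility-2}, together with Proposition \ref{wells-s-ind} (section-independence of $\mathcal{W}$).

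First, I would observe that the map $\iota: \mathrm{Aut}_\mathfrak{h}^{\mathfrak{h}, \mathfrak{g}}(\mathfrak{e}_U) \hookrightarrow \mathrm{Aut}_\mathfrak{h}(\mathfrak{e}_U)$ is the set-theoretic inclusion of a subgroup, hence trivially injective; this establishes exactness at the leftmost term. Exactness at $\mathrm{Aut}_\mathfrak{h}(\mathfrak{e}_U)$ is essentially tautological: by the very definition of $\mathrm{Aut}_\mathfrak{h}^{\mathfrak{h},\mathfrak{g}}(\mathfrak{e}_U)$, an automorphism $\gamma \in \mathrm{Aut}_\mathfrak{h}(\mathfrak{e}_U)$ satisfies $\tau(\gamma) = (\mathrm{id}_\mathfrak{h}, \mathrm{id}_\mathfrak{g})$ if and only if $\gamma$ lies in the image of $\iota$, so $\ker(\tau) = \mathrm{im}(\iota)$.

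The main content of the theorem is exactness at $\mathrm{Aut}(\mathfrak{h}_S) \times \mathrm{Aut}(\mathfrak{g}_T)$, namely $\mathrm{im}(\tau) = \ker(\mathcal{W})$. For the inclusion $\mathrm{im}(\tau) \subseteq \ker(\mathcal{W})$, I would take a pair $(\beta, \alpha) = \tau(\gamma)$ in the image. By definition, such a pair is inducible, so Theorem \ref{thm-inducibility-2} gives that the non-abelian $2$-cocycles $(\chi_{(\beta,\alpha)}, \psi_{(\beta,\alpha)}, \Phi_{(\beta,\alpha)})$ and $(\chi, \psi, \Phi)$ represent the same class in $H^2_{nab}(\mathfrak{g}_T, \mathfrak{h}_S)$; by the formula \eqref{wells-m} this forces $\mathcal{W}((\beta,\alpha)) = 0$. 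Conversely, if $(\beta,\alpha) \in \ker(\mathcal{W})$, then by \eqref{wells-m} the two non-abelian $2$-cocycles above are equivalent. Applying Theorem \ref{thm-inducibility-2} in the reverse direction yields an automorphism $\gamma \in \mathrm{Aut}_\mathfrak{h}(\mathfrak{e}_U)$ with $\tau(\gamma) = (\beta, \alpha)$, so $(\beta, \alpha) \in \mathrm{im}(\tau)$.

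The only subtle point, and the one I would address most carefully, is that the Wells map $\mathcal{W}$ was defined in \eqref{wells-m} using a particular section $s$, whereas the class $[(\chi, \psi, \Phi)] \in H^2_{nab}(\mathfrak{g}_T, \mathfrak{h}_S)$ attached to the extension is canonical. This is resolved by invoking Proposition \ref{wells-s-ind}, which guarantees that $\mathcal{W}((\beta,\alpha))$ is independent of the choice of section, so the formulation in terms of the single reference cocycle is legitimate. No further calculations are needed; once exactness at each spot is recorded as above, the sequence is established.
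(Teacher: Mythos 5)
Your proposal is correct and follows essentially the same route as the paper: injectivity of the inclusion $\iota$, the tautological identification $\ker(\tau)=\mathrm{im}(\iota)$ from the definition of $\mathrm{Aut}_\mathfrak{h}^{\mathfrak{h},\mathfrak{g}}(\mathfrak{e}_U)$, and exactness at the third spot by applying Theorem \ref{thm-inducibility-2} in both directions via the formula (\ref{wells-m}). Your extra remark that Proposition \ref{wells-s-ind} makes the use of a fixed section legitimate is a sound (if implicit in the paper) clarification and does not change the argument.
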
 

\begin{proof}
Since the inclusion map $\iota : \mathrm{Aut}_\mathfrak{h}^{\mathfrak{h}, \mathfrak{g}} (\mathfrak{e}_U) \rightarrow \mathrm{Aut}_\mathfrak{h} (\mathfrak{e}_U)$ is an injection, the above sequence is exact at the first term.

Next, we take an element $\gamma \in \mathrm{ker} (\tau)$. In other words, $\gamma \in \mathrm{Aut}_\mathfrak{h}(\mathfrak{e}_U)$ with the property that $\gamma|_\mathfrak{h} = \mathrm{id}_\mathfrak{h}$ and $p \gamma s = \mathrm{id}_\mathfrak{g}$. This shows that $\gamma \in \mathrm{Aut}_\mathfrak{h}^{\mathfrak{h}, \mathfrak{g}} (\mathfrak{e}_U)$. On the other hand, if $\gamma \in \mathrm{Aut}_\mathfrak{h}^{\mathfrak{h}, \mathfrak{g}} (\mathfrak{e}_U)$, then $\gamma \in \mathrm{ker}(\tau)$. Thus, we have $\mathrm{ker}(\tau) = \mathrm{Aut}_\mathfrak{h}^{\mathfrak{h}, \mathfrak{g}} (\mathfrak{e}_U) \cong \mathrm{im}(\iota)$. This shows that the sequence is exact at the second term.

Finally, to show that the sequence is exact at the third term, take a pair $(\beta, \alpha) \in \mathrm{ker}(\mathcal{W})$. Since the non-abelian $2$-cocycles $(\chi_{(\beta, \alpha)}, \psi_{(\beta, \alpha)}, \Phi_{(\beta, \alpha)})$ and $(\chi, \psi, \Phi)$ are equivalent, by Theorem \ref{thm-inducibility-2}, the pair $(\beta, \alpha)$ is inducible. In other words, there exists an automorphism $\gamma \in \mathrm{Aut}_\mathfrak{h}(\mathfrak{e}_U)$ such that $\tau (\gamma) = (\beta, \alpha)$. This shows that $(\beta, \alpha) \in \mathrm{im}(\tau)$. Conversely, if a pair $(\beta, \alpha) \in \mathrm{im}(\tau)$, then by definition the pair $(\beta, \alpha)$ is inducible. Hence again by Theorem \ref{thm-inducibility-2}, the non-abelian $2$-cocycles  $(\chi_{(\beta, \alpha)}, \psi_{(\beta, \alpha)}, \Phi_{(\beta, \alpha)})$ and $(\chi, \psi, \Phi)$ are equivalent. Therefore, $\mathcal{W}((\beta, \alpha)) = 0$ which implies that $(\beta, \alpha) \in \mathrm{ker}(\mathcal{W})$. Thus, we obtain $\mathrm{ker}(\mathcal{W}) = \mathrm{im}(\tau)$. Hence the result follows.
\end{proof}

\medskip

\medskip

\noindent {\bf Some other exact sequences.} There are two other questions that are relevant to Question A. More precisely, one may ask the following questions.

\medskip

\noindent {\bf Question B.} When a Rota-Baxter automorphism $\alpha \in \mathrm{Aut}(\mathfrak{g}_T)$ can be lifted to an automorphism in $\mathrm{Aut}(\mathfrak{e}_U)$ fixing $\mathfrak{h}$ pointwise? Given a Rota-Baxter automorphism $\alpha \in \mathrm{Aut}(\mathfrak{g}_T)$, this question essentially asks: When the pair $(\mathrm{id}_\mathfrak{h}, \alpha) \in \mathrm{Aut}(\mathfrak{h}_S) \times \mathrm{Aut}(\mathfrak{g}_T)$ is inducible?

\medskip

\noindent {\bf Question C.} When a Rota-Baxter automorphism $\beta \in \mathrm{Aut}(\mathfrak{h}_S)$ can be extended to an automorphism in $\mathrm{Aut}(\mathfrak{e}_U)$ inducing the identity map on $\mathfrak{g}$? Given a Rota-Baxter automorphism $\beta \in \mathrm{Aut}(\mathfrak{h}_S)$, this question asks: When the pair $(\beta, \mathrm{id}_\mathfrak{g})$ is inducible?

\medskip

In the following, we will answer the above questions. To address these, we first set-up some notations. Let
\begin{align*}
\mathrm{Aut}_{\mathfrak{h}}^{\mathfrak{h}} :=~& \{ \gamma \in \mathrm{Aut}_\mathfrak{h} (\mathfrak{e}_U) ~|~ \gamma|_\mathfrak{h} = \mathrm{id}_\mathfrak{h} \},\\
\mathrm{Aut}_{\mathfrak{h}}^{\mathfrak{g}} :=~& \{ \gamma \in \mathrm{Aut}_\mathfrak{h} (\mathfrak{e}_U) ~|~ \overline{\gamma} :=  p \gamma s = \mathrm{id}_\mathfrak{g} \}
\end{align*} 
be two subgroups of $\mathrm{Aut}_\mathfrak{h} (\mathfrak{e}_U)$. Note that $\mathrm{Aut}_\mathfrak{h}^\mathfrak{h} (\mathfrak{e}_U)$ consist of all automorphisms in $\mathrm{Aut}_\mathfrak{h} (\mathfrak{e}_U)$ that induces identity map on $\mathfrak{h}$, and $\mathrm{Aut}_\mathfrak{h}^\mathfrak{g} (\mathfrak{e}_U)$ consist of all automorphisms in $\mathrm{Aut}_\mathfrak{h} (\mathfrak{e}_U)$ that induces identity map on $\mathfrak{g}$. 
There are obvious maps $\tau_1 : \mathrm{Aut}_\mathfrak{h}^\mathfrak{h} (\mathfrak{e}_U) \rightarrow \mathrm{Aut}(\mathfrak{g}_T)$ and $\tau_2 : \mathrm{Aut}_\mathfrak{h}^\mathfrak{g} (\mathfrak{e}_U) \rightarrow \mathrm{Aut}(\mathfrak{h}_S)$ given by
\begin{align*}
\tau_1 (\gamma) :=~& \overline{\gamma}, ~\text{ for } \gamma \in \mathrm{Aut}_\mathfrak{h}^\mathfrak{h} (\mathfrak{e}_U),\\
\tau_2 (\gamma) :=~& \gamma|_\mathfrak{h}, ~ \text{ for } \gamma \in \mathrm{Aut}_\mathfrak{h}^\mathfrak{g} (\mathfrak{e}_U).
\end{align*}

In similar to the Wells map (\ref{wells-m}), we define maps $\mathcal{W}_\mathfrak{g} : \mathrm{Aut}(\mathfrak{g}_T) \rightarrow H^2_{nab} (\mathfrak{g}_T, \mathfrak{h}_S)$ and $\mathcal{W}_\mathfrak{h} : \mathrm{Aut}(\mathfrak{h}_S) \rightarrow H^2_{nab} (\mathfrak{g}_T, \mathfrak{h}_S)$ by
\begin{align*}
\mathcal{W}_\mathfrak{g}  (\alpha) := [   ( \chi_{(\mathrm{id}_\mathfrak{h}, \alpha)}, \psi_{(\mathrm{id}_\mathfrak{h}, \alpha)}, \Phi_{(\mathrm{id}_\mathfrak{h}, \alpha) }) - (\chi, \psi, \Phi)],\\
\mathcal{W}_\mathfrak{h} (\beta) := [   ( \chi_{( \beta, \mathrm{id}_\mathfrak{g})}, \psi_{(\beta, \mathrm{id}_\mathfrak{g})}, \Phi_{(\beta, \mathrm{id}_\mathfrak{g}) }) - (\chi, \psi, \Phi)],
\end{align*}
for $\alpha \in \mathrm{Aut}(\mathfrak{g}_T)$ and $\beta \in \mathrm{Aut}(\mathfrak{h}_S)$. Similar to Proposition \ref{wells-s-ind}, one can check that the maps $\mathcal{W}_\mathfrak{g}$ and $\mathcal{W}_\mathfrak{h}$ doesn't depend on the choice of the section. The following observations show the importance of the maps $\mathcal{W}_\mathfrak{g}$ and $\mathcal{W}_\mathfrak{h}$.

\medskip

- A Rota-Baxter automorphism $\alpha \in \mathrm{Aut} (\mathfrak{g}_T)$ can be lifted to an automorphism in $\mathrm{Aut}(\mathfrak{e}_U)$ fixing $\mathfrak{h}$ pointwise if and only if $\mathcal{W}_\mathfrak{g} (\alpha) = 0$.

\medskip

- A Rota-Baxter automorphism $\beta \in \mathrm{Aut} (\mathfrak{h}_S)$ can be extended to an automorphism in $\mathrm{Aut} (\mathfrak{e}_U)$ inducing the identity on $\mathfrak{g}$ if and only if $\mathcal{W}_\mathfrak{h} (\beta) = 0$.

\medskip

With all the above notations, we have the following result.

\begin{theorem}
Let $0 \rightarrow \mathfrak{h}_S \xrightarrow{i} \mathfrak{e}_U \xrightarrow{p} \mathfrak{g}_T \rightarrow 0$ be a non-abelian extension of Rota-Baxter Lie algebras. Then there are two exact sequences
\begin{align}
1 \rightarrow \mathrm{Aut}_{\mathfrak{h}}^{\mathfrak{h}, \mathfrak{g}} (\mathfrak{e}_U) \xrightarrow{\iota} \mathrm{Aut}_\mathfrak{h}^\mathfrak{h} (\mathfrak{e}_U) \xrightarrow{\tau_1} \mathrm{Aut}(\mathfrak{g}_T) \xrightarrow{\mathcal{W}_\mathfrak{g}} H^2_{nab} (\mathfrak{g}_T, \mathfrak{h}_S) \label{exact-seq-a}\\
1 \rightarrow \mathrm{Aut}_{\mathfrak{h}}^{\mathfrak{h}, \mathfrak{g}} (\mathfrak{e}_U) \xrightarrow{\iota} \mathrm{Aut}_\mathfrak{h}^\mathfrak{g} (\mathfrak{e}_U) \xrightarrow{\tau_2} \mathrm{Aut}(\mathfrak{h}_S) \xrightarrow{\mathcal{W}_\mathfrak{h}} H^2_{nab} (\mathfrak{g}_T, \mathfrak{h}_S). \label{exact-seq-b}
\end{align}
\end{theorem}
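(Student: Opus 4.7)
The plan is to mirror, for each of the two sequences, the three-step argument used to establish Theorem \ref{wells-seq}, leveraging the fact that $\mathcal{W}_\mathfrak{g}$ and $\mathcal{W}_\mathfrak{h}$ are nothing but the restrictions of the Wells map $\mathcal{W}$ to the subgroups $\{\mathrm{id}_\mathfrak{h}\}\times \mathrm{Aut}(\mathfrak{g}_T)$ and $\mathrm{Aut}(\mathfrak{h}_S)\times\{\mathrm{id}_\mathfrak{g}\}$, respectively. For each sequence I would verify exactness at the three non-terminal positions in turn.

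For the sequence (\ref{exact-seq-a}), exactness at the first term is immediate since $\iota$ is an inclusion. For exactness at the second term, I would unwind the definitions: an element $\gamma\in\mathrm{Aut}_\mathfrak{h}^\mathfrak{h}(\mathfrak{e}_U)$ already satisfies $\gamma|_\mathfrak{h}=\mathrm{id}_\mathfrak{h}$, so the additional condition $\tau_1(\gamma)=\overline{\gamma}=\mathrm{id}_\mathfrak{g}$ is exactly what places $\gamma$ in $\mathrm{Aut}_\mathfrak{h}^{\mathfrak{h},\mathfrak{g}}(\mathfrak{e}_U)$; hence $\ker(\tau_1)=\mathrm{Aut}_\mathfrak{h}^{\mathfrak{h},\mathfrak{g}}(\mathfrak{e}_U)=\mathrm{im}(\iota)$. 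For exactness at the third term, I would apply Theorem \ref{thm-inducibility-2} specialised to the pair $(\mathrm{id}_\mathfrak{h},\alpha)$: the condition $\mathcal{W}_\mathfrak{g}(\alpha)=0$ is equivalent to the equivalence of the non-abelian $2$-cocycles $(\chi_{(\mathrm{id}_\mathfrak{h},\alpha)},\psi_{(\mathrm{id}_\mathfrak{h},\alpha)},\Phi_{(\mathrm{id}_\mathfrak{h},\alpha)})$ and $(\chi,\psi,\Phi)$, which is in turn equivalent to the inducibility of $(\mathrm{id}_\mathfrak{h},\alpha)$. To finish, I would observe that a pair $(\mathrm{id}_\mathfrak{h},\alpha)$ is inducible precisely when there exists $\gamma\in\mathrm{Aut}_\mathfrak{h}(\mathfrak{e}_U)$ with $\gamma|_\mathfrak{h}=\mathrm{id}_\mathfrak{h}$ and $\overline{\gamma}=\alpha$, i.e.\ $\gamma\in\mathrm{Aut}_\mathfrak{h}^\mathfrak{h}(\mathfrak{e}_U)$ and $\tau_1(\gamma)=\alpha$; so $\ker(\mathcal{W}_\mathfrak{g})=\mathrm{im}(\tau_1)$.

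For the sequence (\ref{exact-seq-b}), I would proceed in exactly the same fashion with the roles of $\mathfrak{h}$ and $\mathfrak{g}$ interchanged: the inclusion $\iota$ is injective; an element $\gamma\in\mathrm{Aut}_\mathfrak{h}^\mathfrak{g}(\mathfrak{e}_U)$ lies in $\ker(\tau_2)$ iff $\gamma|_\mathfrak{h}=\mathrm{id}_\mathfrak{h}$, which together with $\overline{\gamma}=\mathrm{id}_\mathfrak{g}$ places $\gamma$ in $\mathrm{Aut}_\mathfrak{h}^{\mathfrak{h},\mathfrak{g}}(\mathfrak{e}_U)$; and $\mathcal{W}_\mathfrak{h}(\beta)=0$ is by Theorem \ref{thm-inducibility-2} equivalent to the inducibility of $(\beta,\mathrm{id}_\mathfrak{g})$, i.e.\ to the existence of $\gamma\in\mathrm{Aut}_\mathfrak{h}^\mathfrak{g}(\mathfrak{e}_U)$ with $\tau_2(\gamma)=\beta$.

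There is no genuine obstacle here; the proof is a direct restriction of Theorem \ref{wells-seq} and its proof. The only point that deserves a quick independent check, already indicated in the excerpt, is that $\mathcal{W}_\mathfrak{g}$ and $\mathcal{W}_\mathfrak{h}$ are well-defined (independent of the chosen section), which is handled verbatim as in Proposition \ref{wells-s-ind} by replacing the equivalence datum $\beta\varphi\alpha^{-1}-\varphi$ by $\varphi\alpha^{-1}-\varphi$ in the first case and by $\beta\varphi-\varphi$ in the second. Once this is in place, the two exact sequences follow by the elementary diagram-chase described above.
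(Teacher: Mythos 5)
Your proposal is correct and follows essentially the same route as the paper: inclusion gives exactness at the first term, unwinding the definitions gives $\ker(\tau_1)=\mathrm{Aut}_\mathfrak{h}^{\mathfrak{h},\mathfrak{g}}(\mathfrak{e}_U)=\mathrm{im}(\iota)$, and Theorem \ref{thm-inducibility-2} applied to the pairs $(\mathrm{id}_\mathfrak{h},\alpha)$ and $(\beta,\mathrm{id}_\mathfrak{g})$ identifies $\ker(\mathcal{W}_\mathfrak{g})=\mathrm{im}(\tau_1)$ and $\ker(\mathcal{W}_\mathfrak{h})=\mathrm{im}(\tau_2)$, exactly as in the paper (which treats the second sequence as ``similar'' and omits it). Your extra remark on section-independence of $\mathcal{W}_\mathfrak{g}$ and $\mathcal{W}_\mathfrak{h}$, with the equivalence data $\varphi\alpha^{-1}-\varphi$ and $\beta\varphi-\varphi$, matches the paper's appeal to the analogue of Proposition \ref{wells-s-ind}.
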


\begin{proof}
Note that $\mathrm{Aut}_{\mathfrak{h}}^{\mathfrak{h}, \mathfrak{g}} (\mathfrak{e}_U)$ is a subgroup of both $\mathrm{Aut}_\mathfrak{h}^\mathfrak{h} (\mathfrak{e}_U)$ and $\mathrm{Aut}_\mathfrak{h}^\mathfrak{g} (\mathfrak{e}_U)$. The map $\iota$ (in the above sequences) is just the inclusion map.

\medskip

We first prove the exactness of the sequence (\ref{exact-seq-a}).  Let $\gamma \in \mathrm{ker }(\tau_1)$. Then we have $\gamma|_\mathfrak{h} = \mathrm{id}_\mathfrak{h}$ and $\overline{\gamma} = \mathrm{id}_\mathfrak{g}$ which means that $\gamma \in \mathrm{Aut}_\mathfrak{h}^{\mathfrak{h}, \mathfrak{g}} (\mathfrak{e}_U) \cong \mathrm{im }(i)$. On the other hand, if $\gamma \in \mathrm{Aut}_\mathfrak{h}^{\mathfrak{h}, \mathfrak{g}} (\mathfrak{e}_U)$, then by definition $\gamma|_\mathfrak{h} = \mathrm{id}_\mathfrak{h}$ and $\overline{\gamma} = \mathrm{id}_\mathfrak{g}$, which means that $\gamma \in \mathrm{ker}(\tau_1)$. Therefore, $\mathrm{ker} (\tau_1) = \mathrm{Aut}_\mathfrak{h}^{\mathfrak{h}, \mathfrak{g}} (\mathfrak{e}_U)$ which  shows the sequence is exact at the second term. Next, we take an element $\alpha \in \mathrm{ker } (\mathcal{W}_\mathfrak{g})$. This shows that the non-abelian $2$-cocycles $(\chi_{(\mathrm{id}_\mathfrak{h}, \alpha)}, \psi_{(\mathrm{id}_\mathfrak{h}, \alpha)}, \Phi_{(\mathrm{id}_\mathrm{h}, \alpha)})$ and $(\chi, \psi, \Phi)$ are equivalent. In other words, the pair $(\mathrm{id}_\mathfrak{h}, \alpha)$ is inducible. Therefore, there exists a map $\gamma \in \mathrm{Aut}_\mathfrak{h} (\mathfrak{e}_U)$ such that $\gamma|_\mathfrak{h} = \mathrm{id}_\mathfrak{h}$ and $\overline{\gamma} = \alpha$. This shows that $\gamma \in \mathrm{Aut}_\mathfrak{h}^\mathfrak{h} (\mathfrak{e}_U)$ with $\tau_1 (\gamma) = \alpha$. Hence $\alpha \in \mathrm{im}(\tau_1)$. Conversely, if $\alpha \in \mathrm{im}(\tau_1)$, then by definition the pair $(\mathrm{id}_\mathfrak{h}, \alpha)$ is inducible. Hence by Theorem \ref{thm-inducibility-2}, $( \chi_{(\mathrm{id}_\mathrm{h}, \alpha)}, \psi_{(\mathrm{id}_\mathrm{h}, \alpha)}, \Phi_{(\mathrm{id}_\mathfrak{h}, \alpha)}  )$ and $(\chi, \psi, \Phi)$ are equivalent which implies $\mathcal{W}_\mathfrak{g} (\alpha) = 0$. Thus $\alpha \in \mathrm{ker }(\mathcal{W}_\mathfrak{g})$. In summary, we obtain $\mathrm{ker}(\mathcal{W}_\mathfrak{g}) = \mathrm{im}(\tau_1)$. Hence the sequence is exact at the third term.

\medskip

The verification of the exactness of (\ref{exact-seq-b}) is similar. Hence we omit the details.
\end{proof}

\medskip

\section{Particular case: Abelian extensions of Rota-Baxter Lie algebras}\label{sec-6}

In this section, we show how the results of previous sections fit with the abelian extensions of Rota-Baxter Lie algebras. In particular, we recover the classification result of abelian extensions and obtain the corresponding Wells exact sequence.

\medskip

Let $\mathfrak{g}_T$ be a Rota-Baxter Lie algebra and $\mathfrak{h}_S$ be a vector space equipped with a linear map (not necessarily a representation). Consider $\mathfrak{h}_S$ as an abelian Rota-Baxter Lie algebra. An {\bf abelian extension} of $\mathfrak{g}_T$ by $\mathfrak{h}_S$ is a short exact sequence 
\begin{align}
\label{ab_ext}
\xymatrix{
0 \ar[r] & \mathfrak{h}_S \ar[r]^{i} & \mathfrak{e}_U \ar[r]^{p} & \mathfrak{g}_T \ar[r] & 0.
}
\end{align} 
of Rota-Baxter Lie algebras. Equivalences between abelian extensions of $\mathfrak{g}_T$ by $\mathfrak{h}_S$ can be defined similar to the Definition \ref{defn-nab-equiv}.

For any section $s : \mathfrak{g} \rightarrow \mathfrak{e}$ of the map $p$, there is a $\mathfrak{g}$-module structure on $\mathfrak{h}$ given by $\psi : \mathfrak{g} \rightarrow \mathrm{End}(\mathfrak{h})$, $\psi_x h = [ s(x), h]_\mathfrak{e}$, for $x \in \mathfrak{g}, h \in \mathfrak{h}$. It is easy to verify that this $\mathfrak{g}$-module structure on $\mathfrak{h}$ does not depend on the choice of section. Moreover, with respect to the above $\mathfrak{g}$-module structure, $\mathfrak{h}_S$ becomes a representation of the Rota-Baxter Lie algebra $\mathfrak{g}_T$ \cite{jiang-sheng}, called the induced representation.

\medskip

Given any representation $\mathfrak{h}_S$ of the Rota-Baxter Lie algebra $\mathfrak{g}_T$, let $\mathrm{Ext}_{ab}(\mathfrak{g}_T, \mathfrak{h}_S)$ be the set of all equivalence classes of abelian extensions of $\mathfrak{g}_T$ by $\mathfrak{h}_S$ so that the induced representation on $\mathfrak{h}_S$ coincides with the given one. The following classification result about abelian extensions has been proved in \cite{jiang-sheng}. We show that it can be deduced from our classification result of non-abelian extensions (see Section \ref{sec-3}).

\begin{theorem}
Let $\mathfrak{g}_T$ be a Rota-Baxter Lie algebra and $\mathfrak{h}_S$ be a representation of it. Then 
\begin{align*}
\mathrm{Ext}_{ab} (\mathfrak{g}_T, \mathfrak{h}_S ) ~\cong ~ H^2_{\mathrm{RBLie}} (\mathfrak{g}_T, \mathfrak{h}_S).
\end{align*}
\end{theorem}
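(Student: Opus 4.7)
The plan is to recover this classification by specializing the main non-abelian classification theorem proved in Section \ref{sec-3}. Since an abelian extension of $\mathfrak{g}_T$ by $\mathfrak{h}_S$ is by definition a non-abelian extension in which the bracket on $\mathfrak{h}$ vanishes and the induced action on $\mathfrak{h}$ agrees with the prescribed representation, there is an inclusion $\mathrm{Ext}_{ab}(\mathfrak{g}_T, \mathfrak{h}_S) \hookrightarrow \mathrm{Ext}_{nab}(\mathfrak{g}_T, \mathfrak{h}_S)$. Under the bijection $\mathrm{Ext}_{nab}(\mathfrak{g}_T, \mathfrak{h}_S) \cong H^2_{nab}(\mathfrak{g}_T, \mathfrak{h}_S)$, I will identify the image of this inclusion with $H^2_{\mathrm{RBLie}}(\mathfrak{g}_T, \mathfrak{h}_S)$.

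First I would examine what a non-abelian $2$-cocycle $(\chi, \psi, \Phi)$ becomes in the abelian case. With $[\cdot,\cdot]_\mathfrak{h} = 0$ and $\psi$ fixed as the given action, condition (\ref{cocycle-condition I}) reduces to the statement that $\psi$ is a $\mathfrak{g}$-module structure, and (\ref{cocycle-condition III}) reduces to the axiom that $(\mathfrak{h}, S, \psi)$ is a representation of $\mathfrak{g}_T$. Both hold by hypothesis. The two remaining conditions (\ref{cocycle-condition II}) and (\ref{cocycle-condition IV}) become (since the $[\Phi(x),\Phi(y)]_\mathfrak{h}$ term vanishes) the statements $\delta^2(\chi) = 0$ and
\[
\partial^1(\Phi)(x,y) + \chi(Tx,Ty) - S\chi(x, Ty) - S\chi(Tx, y) = 0,
\]
which together are exactly the condition $\delta^2_{\mathrm{RBLie}}(\chi, \Phi) = 0$ for the Rota-Baxter Lie algebra cochain complex. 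Thus the non-abelian cocycles underlying abelian extensions are in natural bijection with $Z^2_{\mathrm{RBLie}}(\mathfrak{g}_T, \mathfrak{h}_S)$.

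Next I would specialize the equivalence relation. Given $\varphi : \mathfrak{g} \to \mathfrak{h}$, relation (\ref{equivalent I}) forces $\psi = \psi'$ automatically since $[\varphi(x), h]_\mathfrak{h} = 0$, so the equivalence preserves the abelian structure. Relations (\ref{equivalent II}) and (\ref{equivalent III}) then read
\[
\chi - \chi' = \delta^1(\varphi), \qquad \Phi - \Phi' = S\varphi - \varphi \circ T,
\]
which is exactly $(\chi, \Phi) - (\chi', \Phi') = \delta^1_{\mathrm{RBLie}}(\varphi)$. Hence two abelian cocycles are equivalent in the non-abelian sense if and only if they are cohomologous in the Rota-Baxter Lie algebra cohomology, giving a bijection on cohomology classes.

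The bulk of the work is mechanical unpacking of the formulas from Section \ref{sec-2} and Section \ref{sec-3}. The one point requiring genuine care is to verify that the map $\Omega$ of Proposition \ref{prop-nab-ext}, when applied to an abelian cocycle $(\chi, \Phi)$ with $\psi$ the given representation, produces an extension whose induced representation coincides with $\psi$ (so that the resulting extension lies in $\mathrm{Ext}_{ab}(\mathfrak{g}_T, \mathfrak{h}_S)$ rather than merely in $\mathrm{Ext}_{nab}$). This is immediate from the definition of the bracket on $\mathfrak{g} \oplus_{\chi, \psi} \mathfrak{h}$, since $[s(x), h]_{\chi,\psi} = \psi_x(h)$ by construction. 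Putting these observations together with the main theorem of Section \ref{sec-3} yields the desired isomorphism.
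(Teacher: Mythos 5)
Your proposal is correct and follows essentially the same route as the paper: it specializes the non-abelian $2$-cocycle conditions (\ref{cocycle-condition I})--(\ref{cocycle-condition IV}) and the equivalence relations (\ref{equivalent I})--(\ref{equivalent III}) to the case $[\cdot,\cdot]_\mathfrak{h}=0$ with the prescribed action $\psi$, identifies the resulting data with $Z^2_{\mathrm{RBLie}}(\mathfrak{g}_T,\mathfrak{h}_S)$ modulo $\delta^1_{\mathrm{RBLie}}$-coboundaries, and invokes Proposition \ref{prop-nab-ext} for the converse direction. The only cosmetic difference is that you phrase it as restricting the non-abelian bijection to the abelian subset, while the paper writes out the two maps $\Upsilon$ and $\Omega$ directly; the content is the same.
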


\begin{proof}
Let $\mathfrak{e}_U$ be an abelian extension of $\mathfrak{g}_T$ by $\mathfrak{h}_S$ representing an element in $\mathrm{Ext}_{ab} (\mathfrak{g}_T, \mathfrak{h}_S)$. Since $[~,~]_\mathfrak{h} = 0$, it follows from (\ref{cocycle-condition I}) that $\psi$ defines a $\mathfrak{g}$-module structure on $\mathfrak{h}$ which is the prescribed one, and the condition (\ref{cocycle-condition III}) implies that $\mathfrak{h}_S$ is a representation of the Rota-Baxter Lie algebra $\mathfrak{g}_T$. The condition (\ref{cocycle-condition II}) says that $\chi : \wedge^2 \mathfrak{g} \rightarrow \mathfrak{h}$ is a $2$-cocycle on the Chevalley-Eilenberg cochain complex of $\mathfrak{g}$ with coefficients in $\mathfrak{h}$ (i.e. $\delta^2 (\chi) = 0$). Finally, the condition (\ref{cocycle-condition IV}) translates into
\begin{align*}
\partial^1 (\Phi) (x, y) + \chi (Tx, Ty)- S \big(  \chi (Tx, y) + \chi (x, Ty) \big) = 0.
\end{align*}
Thus, we have
\begin{align*}
\delta^2_{\text{RBLie}} (\chi, \Phi) = \big( \delta^2 (\chi),~ \partial^1 (\Phi) + \chi (T \otimes T) - S (\chi (T \otimes \mathrm{id}) + \chi (\mathrm{id} \otimes T))  \big) = 0.
\end{align*}
Hence $(\chi, \Phi)$ is a $2$-cocycle in the cohomology complex of the Rota-Baxter Lie algebra $\mathfrak{g}_T$ with coefficients in $\mathfrak{h}_S$ (see Section \ref{sec-2}). Moreover, if $\mathfrak{e}_U$ and $\mathfrak{e}'_{U'}$ are two equivalent abelian extensions representing same element in $\mathrm{Ext}_{ab} (\mathfrak{g}_T, \mathfrak{h}_S)$, then the corresponding $2$-cocycles $(\chi, \Phi)$ and $(\chi', \Phi')$ are cohomologous. In other words, one obtains a map $\Upsilon: \mathrm{Ext}_{ab} (\mathfrak{g}_T, \mathfrak{h}_S) \rightarrow H^2_{\mathrm{RBLie}} (\mathfrak{g}_T, \mathfrak{h}_S)$.

\medskip

Conversely, following Proposition \ref{prop-nab-ext} with $[~,~]_\mathfrak{h} = 0$, one can show that any cohomology class in $H^2_{\mathrm{RBLie}} (\mathfrak{g}_T, \mathfrak{h}_S)$ gives rise to an equivalence class of abelian extensions of $\mathfrak{g}_T$ by $\mathfrak{h}_S$. Hence there is a map $\Omega : H^2_{\mathrm{RBLie}} (\mathfrak{g}_T, \mathfrak{h}_S) \rightarrow \mathrm{Ext}_{ab} (\mathfrak{g}_T, \mathfrak{h}_S)$. Finally, the maps $\Upsilon$ and $\Omega$ are inverses to each other. This completes the proof.
\end{proof}

\medskip


Let $\mathfrak{g}_T$ be a Rota-Baxter Lie algebra and $\mathfrak{h}_S$ be a representation of it (with the $\mathfrak{g}$-module structure on $\mathfrak{h}$ given by $\psi : \mathfrak{g} \rightarrow \mathrm{End}(\mathfrak{h})$). Let $0 \rightarrow \mathfrak{h}_S \xrightarrow{i} \mathfrak{e}_U \xrightarrow{p} \mathfrak{g}_T \rightarrow 0$ be an abelian extension representing an element in $\mathrm{Ext}_{ab} (\mathfrak{g}_T, \mathfrak{h}_S)$. For any fixed section $s : \mathfrak{g} \rightarrow \mathfrak{e}$, by definition the $\mathfrak{g}$-module structure $\psi$ is given by $\psi_x h = [s(x), h]_\mathfrak{e}$, for $x \in \mathfrak{g}$, $h \in \mathfrak{h}$. Let $(\chi, \Phi)$ be the $2$-cocycle associated to the above abelian extension depending on the section $s$ (see the above Theorem). With all these set-up, Theorem \ref{thm-inducibility} translates into the following.

\begin{theorem}
Let $0 \rightarrow \mathfrak{h}_S \xrightarrow{i} \mathfrak{e}_U \xrightarrow{p} \mathfrak{g}_T \rightarrow 0$ be an abelian extension of the Rota-Baxter Lie algebra $\mathfrak{g}_T$ by a representation $\mathfrak{h}_S$. A pair $(\beta, \alpha) \in \mathrm{Aut}(\mathfrak{h}_S) \times \mathrm{Aut}(\mathfrak{g}_T)$ is inducible if and only if
\begin{align*}
\beta (\psi_x h) - \psi_{\alpha(x)} \beta (h) = 0
\end{align*}
and there exists a linear map $\lambda \in \mathrm{Hom}(\mathfrak{g}, \mathfrak{h})$ satisfying
\begin{align*}
\beta (\chi (x,y)) - \chi (\alpha(x), \alpha(y)) =~& \psi_{\alpha (x)} \lambda(y) - \psi_{\alpha(y)} \lambda (x) -\lambda ([x,y]_\mathfrak{g}),\\
\beta (\Phi (x)) -\Phi (\alpha (x)) =~& S (\lambda(x)) - \lambda (T(x)),~\text{ for } x, y \in \mathfrak{g} \text{ and } h \in \mathfrak{h}.
\end{align*}
\end{theorem}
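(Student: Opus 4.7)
The plan is to derive this statement as a direct specialization of Theorem \ref{thm-inducibility} to the abelian setting, whose defining feature is that the Lie bracket on $\mathfrak{h}$ vanishes identically. The proof amounts to substituting $[\cdot,\cdot]_\mathfrak{h} = 0$ into the three conditions (I), (II), (III) of Theorem \ref{thm-inducibility} and observing how each simplifies.

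First, I would analyze condition (I) of Theorem \ref{thm-inducibility}, which reads $\beta(\psi_x h) - \psi_{\alpha(x)}\beta(h) = [\lambda(x), \beta(h)]_\mathfrak{h}$. In the abelian case the right-hand side vanishes, so this reduces to $\beta(\psi_x h) = \psi_{\alpha(x)}\beta(h)$. Crucially, the resulting equation no longer involves $\lambda$; it is a standalone compatibility of $(\beta,\alpha)$ with the $\mathfrak{g}$-module structure on $\mathfrak{h}$, which is why it must be stated separately in the present theorem. The term $[\lambda(x),\lambda(y)]_\mathfrak{h}$ appearing in condition (II) also vanishes, yielding exactly the first $\lambda$-dependent displayed equation of the theorem. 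Condition (III) contains no bracket on $\mathfrak{h}$ and is preserved verbatim.

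Because condition (I) in the abelian case is decoupled from $\lambda$, the assertion of Theorem \ref{thm-inducibility}---namely that there exists $\lambda$ satisfying (I), (II), (III)---becomes equivalent to the conjunction of the standalone compatibility $\beta(\psi_x h) = \psi_{\alpha(x)}\beta(h)$ \emph{and} the existence of $\lambda$ satisfying the simplified versions of (II) and (III). This yields the claimed if-and-only-if characterization in both directions. I do not anticipate any genuine obstacle; the only minor point to verify is that in an abelian extension the map $\psi$ arising from the non-abelian $2$-cocycle induced by a section $s$ coincides with the prescribed representation structure on $\mathfrak{h}_S$, which is precisely the section-independence statement recalled at the start of Section \ref{sec-6}.
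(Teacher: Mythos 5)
Your proposal is correct and takes essentially the same route as the paper: the paper presents this theorem as the direct translation of Theorem \ref{thm-inducibility} to the abelian setting, obtained exactly as you describe by setting $[\cdot,\cdot]_\mathfrak{h}=0$ in conditions (I)--(III), so that (I) decouples from $\lambda$ and becomes the standalone compatibility condition while (II) loses the term $[\lambda(x),\lambda(y)]_\mathfrak{h}$ and (III) is unchanged.
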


\medskip

Let $0 \rightarrow \mathfrak{h}_S \xrightarrow{i} \mathfrak{e}_U \xrightarrow{p} \mathfrak{g}_T \rightarrow 0$ be an abelian extension of the Rota-Baxter Lie algebra $\mathfrak{g}_T$ by a representation $\mathfrak{h}_S$, and let $\psi$ denotes the corresponding $\mathfrak{g}$-module structure on $\mathfrak{h}$. Define
\begin{align*}
C_\psi = \{ (\beta, \alpha) \in \mathrm{Aut}(\mathfrak{h}_S) \times \mathrm{Aut}(\mathfrak{g}_T) ~|~ \beta (\psi_x h) = \psi_{\alpha (x)} \beta (h), \text{ for all } x \in \mathfrak{g}, h \in \mathfrak{h} \}.
\end{align*}
The space $C_\psi$ is called the space of {\bf compatible pairs} of automorphisms. It is easy to see that $C_\psi$ is a subgroup of $\mathrm{Aut}(\mathfrak{h}_S) \times \mathrm{Aut}(\mathfrak{g}_T)$.

Let $(\beta, \alpha) \in \mathrm{Aut}(\mathfrak{h}_S) \times \mathrm{Aut}(\mathfrak{g}_T)$. Then the map $\alpha$ induces a new $\mathfrak{g}$-module structure $\psi^\alpha : \mathfrak{g} \rightarrow \mathrm{End}(\mathfrak{h})$ on $\mathfrak{h}$ given by $(\psi^\alpha)_x h := \psi_{\alpha (x)} h$, for $x \in \mathfrak{g}$, $h \in \mathfrak{h}$. This infact gives rise to a representation of the Rota-Baxter Lie algebra $\mathfrak{g}_T$ on $\mathfrak{h}_S$ as
\begin{align*}
(\psi^\alpha)_{T(x)} S(h) = \psi_{\alpha T(x)} S(h) =~& \psi_{T \alpha (x)} S(h) \\
=~& S (\psi_{T\alpha(x)} h + \psi_{\alpha (x)} S(h)) \\
=~& S \big(  (\psi^\alpha)_{T(x)} h + (\psi^\alpha)_x S(h) \big), \text{ for } x \in \mathfrak{g}, h \in \mathfrak{h}.
\end{align*} 
We denote this representation by $\mathfrak{h}^\alpha_S$. Then we have the following.

\begin{lemma}
A pair $(\beta, \alpha) \in \mathrm{Aut}(\mathfrak{h}_S) \times \mathrm{Aut}(\mathfrak{g}_T)$ is in $C_\psi$ if and only if the linear map $\beta : \mathfrak{h} \rightarrow \mathfrak{h}$ is a morphism of Rota-Baxter representations from $\mathfrak{h}_S$ to $\mathfrak{h}^\alpha_S$.
\end{lemma}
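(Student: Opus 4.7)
The plan is to unwind the definition of a morphism of Rota-Baxter representations and check that the two conditions it imposes reduce, respectively, to the defining relation of $C_\psi$ and to the automorphism property of $\beta$ that is already built into $\mathrm{Aut}(\mathfrak{h}_S)$. So this is essentially a definition-chase, but one should be careful to note that the twisted representation $\mathfrak{h}^\alpha_S$ has the same underlying linear map $S : \mathfrak{h} \to \mathfrak{h}$ and only the $\mathfrak{g}$-action is changed from $\psi$ to $\psi^\alpha = \psi \circ \alpha$.

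First, I would spell out what a morphism $\beta : \mathfrak{h}_S \to \mathfrak{h}^\alpha_S$ of Rota-Baxter representations of $\mathfrak{g}_T$ must satisfy: (i) $\mathfrak{g}$-equivariance $\beta \circ \psi_x = (\psi^\alpha)_x \circ \beta$ for every $x \in \mathfrak{g}$, which, using $(\psi^\alpha)_x = \psi_{\alpha(x)}$, is exactly $\beta(\psi_x h) = \psi_{\alpha(x)} \beta(h)$ for all $x \in \mathfrak{g}, h \in \mathfrak{h}$; and (ii) compatibility with the Rota-Baxter operators, $\beta \circ S = S \circ \beta$ (the source and target share the same $S$).

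Next, I would observe that condition (i) is literally the defining relation that places $(\beta,\alpha)$ into $C_\psi$, so it holds if and only if $(\beta,\alpha) \in C_\psi$. Condition (ii), on the other hand, is automatic from the hypothesis $\beta \in \mathrm{Aut}(\mathfrak{h}_S)$: by the definition of a Rota-Baxter morphism (Section \ref{sec-2}), any element of $\mathrm{Aut}(\mathfrak{h}_S)$ intertwines $S$ with itself. Combining (i) and (ii) therefore yields both directions of the equivalence.

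There is really no hard step here; the only thing to keep straight is the distinction between the two actions $\psi$ and $\psi^\alpha$ while recognizing that the Rota-Baxter part of the data is unchanged by twisting by $\alpha$. One should also invoke the computation given just above the lemma to justify that $\mathfrak{h}^\alpha_S$ is indeed a representation of $\mathfrak{g}_T$, so that it makes sense to speak of morphisms into it.
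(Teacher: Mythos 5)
Your proof is correct and is exactly the definition-chase the paper intends: the equivariance condition $\beta\circ\psi_x=\psi_{\alpha(x)}\circ\beta$ is literally the defining relation of $C_\psi$, while the intertwining $\beta\circ S=S\circ\beta$ is already part of $\beta\in\mathrm{Aut}(\mathfrak{h}_S)$, so the paper (which states the lemma without proof) implicitly relies on the same two observations. No gaps.
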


Let $0 \rightarrow \mathfrak{h}_S \xrightarrow{i} \mathfrak{e}_U \xrightarrow{p} \mathfrak{g}_T \rightarrow 0$ be an abelian extension of the Rota-Baxter Lie algebra $\mathfrak{g}_T$ by a representation $\mathfrak{h}_S$ (with the $\mathfrak{g}$-module structure on $\mathfrak{h}$ given by $\psi : \mathfrak{g} \rightarrow \mathrm{End}(\mathfrak{h})$). Given a section $s$, let $(\chi, \Phi)$ be the $2$-cocycle associated to abelian extension. Let $(\beta, \alpha) \in \mathrm{Aut}(\mathfrak{h}_S) \times \mathrm{Aut}(\mathfrak{g}_T)$ be a pair of Rota-Baxter automorphisms. Since in the case of abelian extensions, we are not changing the $\mathfrak{g}$-module structure $\psi$ (unlike non-abelian case where we allow to change $\psi$ to $\psi_{(\beta, \alpha)}$), the pair $(\chi_{(\beta, \alpha)}, \Phi_{(\beta, \alpha)})$ may not be a $2$-cocycle. However, if $(\beta, \alpha) \in C_\psi$, then we observe that
\begin{align*}
\delta^2 (\chi_{(\beta, \alpha)}) (x,y,z) =~& \psi_x (\chi_{(\beta, \alpha)} (y,z)) + ~c.p.~- \chi_{(\beta, \alpha)} ([x,y]_\mathfrak{g}, z) -~ c.p. \\
=~& \psi_x \beta \chi (\alpha^{-1}y, \alpha^{-1}z) + ~c.p.~ - \beta \chi (\alpha^{-1} [x,y]_\mathfrak{g}, \alpha^{-1}z) -~c.p.\\
=~& \beta \psi_{\alpha^{-1}x} \chi (\alpha^{-1}y, \alpha^{-1}z) + ~c.p.~- \beta \chi ([\alpha^{-1}x, \alpha^{-1}y]_\mathfrak{g}, \alpha^{-1}z) -~c.p.   \qquad (\text{as } (\beta, \alpha) \in C_\psi) \\
=~& \beta (\delta^2 (\chi)) (\alpha^{-1}x, \alpha^{-1} y, \alpha^{-1} z) = 0 \qquad (\text{as } \delta^2 (\chi) = 0)
\end{align*}
and
\begin{align*}
&\partial^1 (\Phi_{(\beta, \alpha)}) (x,y) + \chi_{(\beta, \alpha)} (Tx, Ty) - S \chi_{(\beta, \alpha)} (Tx, y) - S \chi_{(\beta, \alpha)} (x, Ty) \\
&= \psi_{T(x)} \Phi_{(\beta, \alpha)} (y) - S (\psi_x \Phi_{(\beta, \alpha)} (y)) - \Phi_{(\beta, \alpha)} ([Tx,y]_\mathfrak{g} + [x, Ty]_\mathfrak{g}) - \psi_{T(y)} \Phi_{(\beta, \alpha)} (x) + S (\psi_y \Phi_{(\beta, \alpha)} (x)) \\
& \quad + \beta \chi (\alpha^{-1} Tx, \alpha^{-1} Ty) - S \beta \chi (\alpha^{-1} Tx, \alpha^{-1}y) - S \beta \chi (\alpha^{-1}x, \alpha^{-1} Ty) \\
&= \psi_{T(x)} \beta \Phi \alpha^{-1}(y) - S (\psi_x \beta \Phi \alpha^{-1}(y)) - \beta \Phi \alpha^{-1} ( [Tx,y]_\mathfrak{g} + [x, Ty]_\mathfrak{g} ) - \psi_{T(y)} \beta \Phi \alpha^{-1}(x) + S (\psi_y \beta \Phi \alpha^{-1}(x)) \\
& \quad + \beta \chi (T \alpha^{-1}x, T \alpha^{-1} y) - \beta S \chi (T \alpha^{-1}x, \alpha^{-1}y) - \beta S \chi (\alpha^{-1}x, T \alpha^{-1} y) \\
&= \beta \bigg(  \psi_{T \alpha^{-1} (x)} \Phi \alpha^{-1} (y) - S (\psi_{\alpha^{-1}(x)} \Phi \alpha^{-1}(y)) - \Phi ([T \alpha^{-1}x, \alpha^{-1}y]_\mathfrak{g} + [ \alpha^{-1}x,T \alpha^{-1}y]_\mathfrak{g} ) \\
& \quad - \psi_{T \alpha^{-1} (y)} \Phi \alpha^{-1} (x) + S (\psi_{\alpha^{-1}(y)} \Phi \alpha^{-1}(x)) + \chi (T \alpha^{-1}x, T \alpha^{-1}y) - S \chi (T\alpha^{-1}x, \alpha^{-1}y) - S \chi (\alpha^{-1}x, T \alpha^{-1}y) \bigg) \\
&= \beta \big(  \underbrace{\partial^1 (\Phi) + \chi \circ (T \otimes T) - S \chi (T \otimes \mathrm{id}) - S \chi (\mathrm{id} \otimes T) }_{= 0} \big) (\alpha^{-1}x,\alpha^{-1}y) = 0.
\end{align*}
Therefore, 
\begin{align*}
\delta^2_\mathrm{RBLie} (\chi_{(\beta, \alpha)} , \Phi_{(\beta, \alpha)}) = \big( \delta^2 (\chi_{(\beta, \alpha)}),~  \partial^1 (\Phi_{(\beta, \alpha)}) + \chi_{(\beta, \alpha)} \circ (T \otimes T) - S \chi_{(\beta, \alpha)} (T \otimes \mathrm{id}) - S \chi_{(\beta, \alpha)} (\mathrm{id} \otimes T) \big) = 0
\end{align*}
which shows that $(\chi_{(\beta, \alpha)}, \Phi_{(\beta, \alpha)})$ is a $2$-cocycle. With the above observations, Theorem \ref{thm-inducibility-2} has the following form in the abelian context.

\begin{theorem}\label{abl-ext1}
Let $0 \rightarrow \mathfrak{h}_S \xrightarrow{i} \mathfrak{e}_U \xrightarrow{p} \mathfrak{g}_T \rightarrow 0$ be an abelian extension of the Rota-Baxter Lie algebra $\mathfrak{g}_T$ by a representation $\mathfrak{h}_S$. A pair $(\beta, \alpha) \in \mathrm{Aut}(\mathfrak{h}_S) \times \mathrm{Aut}(\mathfrak{g}_T)$ is inducible if and only if
\begin{enumerate}
\item $(\beta, \alpha) \in C_\psi$,
\item the $2$-cocycles $(\chi_{(\beta, \alpha)}, \Phi_{(\beta, \alpha)})$ and $(\chi, \Phi)$ are cohomologous.
\end{enumerate}
\end{theorem}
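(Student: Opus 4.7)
My plan is to deduce this theorem directly from Theorem \ref{thm-inducibility} by specializing its three conditions (I), (II), (III) to the abelian setting, in which $[\cdot,\cdot]_\mathfrak{h} = 0$. First I would observe that (I) collapses to $\beta(\psi_x h) = \psi_{\alpha(x)}\beta(h)$ for all $x \in \mathfrak{g}$ and $h \in \mathfrak{h}$, which is exactly the defining condition of the subgroup $C_\psi$. Since this identity no longer involves $\lambda$, it is a stand-alone prerequisite, and this gives condition (1) of the theorem.

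Once (1) is in place, my next step is to show that the remaining identities (II) and (III), with the cross term $[\lambda(x),\lambda(y)]_\mathfrak{h}$ in (II) killed by abelianness, are equivalent to the cocycles $(\chi_{(\beta,\alpha)},\Phi_{(\beta,\alpha)})$ and $(\chi,\Phi)$ being cohomologous in the Rota-Baxter cochain complex of Section \ref{sec-2}. The key device is a reparametrization: given $\lambda$ satisfying (II) and (III), I would set $\lambda' := \lambda \circ \alpha^{-1}$ and substitute $x \mapsto \alpha^{-1}(x)$, $y \mapsto \alpha^{-1}(y)$ throughout. Using that $\alpha$ is a Lie algebra automorphism (so $\alpha^{-1}$ preserves the bracket) and that $T\alpha = \alpha T$ (so $T$ commutes with $\alpha^{-1}$), together with the definitions of $\chi_{(\beta,\alpha)}$ and $\Phi_{(\beta,\alpha)}$, the equations (II) and (III) will rewrite precisely as
\begin{align*}
\chi_{(\beta,\alpha)}(x,y)-\chi(x,y) &= \psi_x\lambda'(y) - \psi_y\lambda'(x) - \lambda'([x,y]_\mathfrak{g}),\\
\Phi_{(\beta,\alpha)}(x)-\Phi(x) &= S(\lambda'(x)) - \lambda'(T(x)).
\end{align*}
These two lines are precisely the two components of $\delta^1_\mathrm{RBLie}(\lambda')$ applied to $(x,y)$ and $x$ respectively, so together they express that $(\chi_{(\beta,\alpha)},\Phi_{(\beta,\alpha)})$ and $(\chi,\Phi)$ differ by a coboundary, i.e.\ condition (2). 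The converse direction is obtained by running the substitution in reverse: given a $\lambda'$ witnessing the cohomology relation, I set $\lambda := \lambda' \circ \alpha$ and recover (II) and (III), which combined with (1) supplies all three hypotheses of Theorem \ref{thm-inducibility}.

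The only step requiring care, and which I expect to be the mild main obstacle, is verifying that $\delta^1_\mathrm{RBLie}$ unwinds to the two displayed lines above. For this I would specialize the general formula for $\delta^n_\mathrm{RBLie}$ to $n=1$: the first coordinate then reproduces the Chevalley-Eilenberg coboundary $\delta^1$ on $\lambda'$, and the second coordinate reduces to $S\lambda' - \lambda'\circ T$ because $\partial^{0}$ acts on the trivial group $C^0(\mathfrak{g},\mathfrak{h}) = 0$ and only the $T^{\otimes 1}$ and identity terms survive. Beyond this bookkeeping, no substantive difficulty is expected: it is exactly the vanishing of the bracket $[\cdot,\cdot]_\mathfrak{h}$ that eliminates the quadratic obstruction in (II) and the commutator term in (I), which is what allows the nonlinear inducibility condition of Theorem \ref{thm-inducibility} to degenerate into the pair (compatibility) $+$ (linear cohomological) conditions appearing in the statement.
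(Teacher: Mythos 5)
Your proposal is correct and follows essentially the same route as the paper: the paper also obtains this theorem by specializing Theorem \ref{thm-inducibility} (equivalently Theorem \ref{thm-inducibility-2}) to $[\cdot,\cdot]_\mathfrak{h}=0$, with condition (I) degenerating to membership in $C_\psi$ and the substitution $\lambda'=\lambda\circ\alpha^{-1}$ (the paper's $\varphi=\lambda\alpha^{-1}$) turning (II) and (III) into the statement that $(\chi_{(\beta,\alpha)},\Phi_{(\beta,\alpha)})-(\chi,\Phi)=\delta^1_{\mathrm{RBLie}}(\lambda')$. Only a cosmetic slip: the second component of $\delta^1_{\mathrm{RBLie}}$ has no $\partial^0$-term because $C^1_{\mathrm{RBLie}}(\mathfrak{g}_T,\mathfrak{h}_S)=C^1(\mathfrak{g},\mathfrak{h})$ carries no degree-zero summand (the paper sets $C^0_{\mathrm{RBLie}}=0$), not because $C^0(\mathfrak{g},\mathfrak{h})$ itself vanishes.
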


\medskip

For any $(\beta, \alpha) \in \mathrm{Aut} (\mathfrak{h}_S) \times \mathrm{Aut} (\mathfrak{g}_T)$, the pair $(\chi_{(\beta, \alpha)}, \Phi_{(\beta, \alpha)})$ need not to be a $2$-cocycle. However, it has been observed that $(\chi_{(\beta, \alpha)}, \Phi_{(\beta, \alpha)})$ is a $2$-cocycle if $(\beta, \alpha) \in C_\psi$. This allows us to consider the {\bf Wells map} in the abelian context as a set map $\mathcal{W} : C_\psi \rightarrow H^2_\mathrm{RBLie} (\mathfrak{g}_T, \mathfrak{h}_S)$ defined by
\begin{align}\label{w-abel}
\mathcal{W} ((\beta, \alpha)) = [   (\chi_{(\beta, \alpha)}, \Phi_{(\beta, \alpha)}) - (\chi, \Phi)], \text{ for } (\beta, \alpha) \in C_\psi.
\end{align}
It follows from Theorem \ref{abl-ext1} that a pair $(\beta, \alpha) \in \mathrm{Aut} (\mathfrak{h}_S) \times \mathrm{Aut}(\mathfrak{g}_T)$ is inducible if and only if $(\beta, \alpha) \in C_\psi$ and $\mathcal{W} ((\beta, \alpha)) = 0.$

Since the image of the map $\tau : \mathrm{Aut}_\mathfrak{h} (\mathfrak{e}_U) \rightarrow \mathrm{Aut}(\mathfrak{h}_S) \times \mathrm{Aut}(\mathfrak{g}_T)$ lies in the subgroup $C_\psi \subset \mathrm{Aut}(\mathfrak{h}_S) \times \mathrm{Aut}(\mathfrak{g}_T)$, we get the following Wells short exact sequence (deduce from Theorem \ref{wells-seq}).

\begin{theorem}\label{wells-seq-abl}
Let $0 \rightarrow \mathfrak{h}_S \xrightarrow{i} \mathfrak{e}_U \xrightarrow{p} \mathfrak{g}_T \rightarrow 0$ be an abelian extension of the Rota-Baxter Lie algebra $\mathfrak{g}_T$ by a representation $\mathfrak{h}_S$. Then there is an exact sequence
\begin{align*}
1 \rightarrow \mathrm{Aut}_\mathfrak{h}^{\mathfrak{h}, \mathfrak{g}} (\mathfrak{e}_U) \xrightarrow{\iota} \mathrm{Aut}_\mathfrak{h} (\mathfrak{e}_U) \xrightarrow{\tau} C_\psi \xrightarrow{\mathcal{W}} H^2_{\mathrm{RBLie}} (\mathfrak{g}_T, \mathfrak{h}_S).
\end{align*}
\end{theorem}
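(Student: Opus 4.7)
The plan is to mirror the proof of Theorem \ref{wells-seq} verbatim, replacing the role of Theorem \ref{thm-inducibility-2} by its abelian analogue (Theorem \ref{abl-ext1}). The one new point that needs attention, compared with the non-abelian case, is that the third arrow of the sequence lands in the subgroup $C_\psi$ rather than the full product $\mathrm{Aut}(\mathfrak{h}_S) \times \mathrm{Aut}(\mathfrak{g}_T)$. Therefore the first task is to verify the claim (asserted just before the statement) that $\mathrm{im}(\tau) \subseteq C_\psi$.

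To check $\mathrm{im}(\tau) \subseteq C_\psi$, I would take $\gamma \in \mathrm{Aut}_\mathfrak{h}(\mathfrak{e}_U)$ with $\tau(\gamma) = (\beta, \alpha)$, so $\beta = \gamma|_\mathfrak{h}$ and $\alpha = p\gamma s$. For $x \in \mathfrak{g}$, $h \in \mathfrak{h}$, compute
\[
\beta(\psi_x h) = \gamma([s(x),h]_\mathfrak{e}) = [\gamma s(x), \gamma(h)]_\mathfrak{e} = [\gamma s(x), \beta(h)]_\mathfrak{e}.
\]
Since $(\gamma s - s\alpha)(x) \in \mathfrak{h}$ and $[~,~]_\mathfrak{h} = 0$, the right-hand side equals $[s\alpha(x), \beta(h)]_\mathfrak{e} = \psi_{\alpha(x)}\beta(h)$. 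Equivalently, this is condition (I) of Theorem \ref{thm-inducibility} applied to the abelian situation, where the term $[\lambda(x),\beta(h)]_\mathfrak{h}$ vanishes. Hence $\tau$ factors through $C_\psi$ as claimed.

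Exactness at the first term is clear since $\iota$ is an inclusion. For exactness at $\mathrm{Aut}_\mathfrak{h}(\mathfrak{e}_U)$, one checks that $\gamma \in \ker(\tau)$ means $\gamma|_\mathfrak{h} = \mathrm{id}_\mathfrak{h}$ and $\overline{\gamma} = \mathrm{id}_\mathfrak{g}$, which is precisely the defining property of $\mathrm{Aut}_\mathfrak{h}^{\mathfrak{h},\mathfrak{g}}(\mathfrak{e}_U)$; this is word-for-word the middle step in the proof of Theorem \ref{wells-seq}.

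For exactness at $C_\psi$, let $(\beta,\alpha) \in C_\psi$. If $(\beta,\alpha) \in \mathrm{im}(\tau)$, then $(\beta,\alpha)$ is inducible by definition, so by Theorem \ref{abl-ext1} the $2$-cocycles $(\chi_{(\beta,\alpha)}, \Phi_{(\beta,\alpha)})$ and $(\chi, \Phi)$ are cohomologous, yielding $\mathcal{W}((\beta,\alpha)) = 0$. Conversely, if $(\beta,\alpha) \in \ker(\mathcal{W})$, then the two $2$-cocycles are cohomologous in $H^2_{\mathrm{RBLie}}(\mathfrak{g}_T,\mathfrak{h}_S)$; combined with the assumption $(\beta,\alpha) \in C_\psi$, Theorem \ref{abl-ext1} gives inducibility, hence $(\beta,\alpha) \in \mathrm{im}(\tau)$. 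I expect no serious obstacle here: the only subtlety is to keep track of the fact that $\mathcal{W}$ is defined only on $C_\psi$, so the hypothesis $(\beta,\alpha) \in C_\psi$ must be explicitly carried along when invoking Theorem \ref{abl-ext1}, whose two conditions correspond exactly to membership in $C_\psi$ and vanishing of $\mathcal{W}$.
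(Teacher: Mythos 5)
Your proposal is correct and matches the paper's route: the paper obtains this theorem by noting that $\mathrm{im}(\tau)$ lies in $C_\psi$ and then deducing exactness exactly as in Theorem \ref{wells-seq}, with the abelian inducibility criterion (Theorem \ref{abl-ext1}) playing the role of Theorem \ref{thm-inducibility-2}. Your explicit check that $\mathrm{im}(\tau)\subseteq C_\psi$ (using $(\gamma s - s\alpha)(x)\in\mathfrak{h}$ and $[~,~]_\mathfrak{h}=0$) is precisely the observation the paper states without proof, so there is no substantive difference.
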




In the next result, we observe that the group $\mathrm{Aut}_\mathfrak{h}^{\mathfrak{h}, \mathfrak{g}} (\mathfrak{e}_U)$ can be interpreted as the space of derivations on $\mathfrak{g}_T$ with values in $\mathfrak{h}_S$.

\begin{proposition}\label{aut-der}
Let $0 \rightarrow \mathfrak{h}_S \xrightarrow{i} \mathfrak{e}_U \xrightarrow{p} \mathfrak{g}_T \rightarrow 0$ be an abelian extension of the Rota-Baxter Lie algebra $\mathfrak{g}_T$ by a representation $\mathfrak{h}_S$. Then there is a group isomorphism $\mathrm{Aut}_{\mathfrak{h}}^{\mathfrak{h}, \mathfrak{g}} (\mathfrak{e}_U) \cong \mathrm{Der}(\mathfrak{g}_T , \mathfrak{h}_S)$. In particular, the group $\mathrm{Aut}_{\mathfrak{h}}^{\mathfrak{h}, \mathfrak{g}} (\mathfrak{e}_U)$ is abelian.
\end{proposition}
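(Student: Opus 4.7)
The plan is to construct a map $\mathcal{F} \colon \mathrm{Aut}_{\mathfrak{h}}^{\mathfrak{h}, \mathfrak{g}}(\mathfrak{e}_U) \to \mathrm{Der}(\mathfrak{g}_T, \mathfrak{h}_S)$ and to show it is a bijective group homomorphism (from composition to addition). Fix any section $s \colon \mathfrak{g} \to \mathfrak{e}$ of $p$. For $\gamma \in \mathrm{Aut}_{\mathfrak{h}}^{\mathfrak{h}, \mathfrak{g}}(\mathfrak{e}_U)$, define
\[
d_\gamma \colon \mathfrak{g} \to \mathfrak{h}, \qquad d_\gamma(x) := \gamma(s(x)) - s(x).
\]
Well-definedness (that the image lies in $\mathfrak{h} = \ker p$) follows because $p\gamma s = \mathrm{id}_\mathfrak{g} = ps$, so $p(\gamma s - s) = 0$.

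Next I would verify that $d_\gamma$ satisfies both conditions characterizing a derivation on $\mathfrak{g}_T$ with values in $\mathfrak{h}_S$, namely $d_\gamma[x,y]_\mathfrak{g} = \psi_x d_\gamma y - \psi_y d_\gamma x$ and $S \circ d_\gamma = d_\gamma \circ T$. For the first, expand $\gamma[s(x),s(y)]_\mathfrak{e} = [s(x)+d_\gamma(x),\, s(y)+d_\gamma(y)]_\mathfrak{e}$ and use both $[\,,\,]_\mathfrak{h} = 0$ (abelianness of $\mathfrak{h}$) and $\psi_x h = [s(x),h]_\mathfrak{e}$; comparing with $\gamma$ applied to the cocycle identity $s[x,y]_\mathfrak{g} = [s(x),s(y)]_\mathfrak{e} - \chi(x,y)$ (using $\gamma|_\mathfrak{h} = \mathrm{id}_\mathfrak{h}$ to fix $\chi(x,y)$) yields the Leibniz-type identity. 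For the second, write $Us(x) = sT(x) + \Phi(x)$ with $\Phi(x) \in \mathfrak{h}$, then use the Rota-Baxter compatibility $U\gamma = \gamma U$ together with $\gamma(\Phi(x)) = \Phi(x)$; the terms $\Phi(x)$ cancel and one obtains $S d_\gamma(x) = d_\gamma(T x)$.

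Conversely, for $d \in \mathrm{Der}(\mathfrak{g}_T, \mathfrak{h}_S)$, I would invoke Theorem \ref{thm-inducibility} with $(\beta,\alpha) = (\mathrm{id}_\mathfrak{h}, \mathrm{id}_\mathfrak{g})$ and $\lambda = d$. Since $\mathfrak{h}$ is abelian, condition (I) reduces to $0 = 0$; condition (II) becomes precisely $d[x,y]_\mathfrak{g} = \psi_x d(y) - \psi_y d(x)$; and condition (III) becomes $S\circ d = d\circ T$. Both hold because $d$ is a derivation. The theorem then produces an automorphism $\gamma_d \in \mathrm{Aut}_\mathfrak{h}(\mathfrak{e}_U)$, explicitly given by $\gamma_d(h + s(x)) = h + d(x) + s(x)$, which visibly restricts to $\mathrm{id}_\mathfrak{h}$ on $\mathfrak{h}$ and satisfies $p\gamma_d s = \mathrm{id}_\mathfrak{g}$, hence $\gamma_d \in \mathrm{Aut}_\mathfrak{h}^{\mathfrak{h},\mathfrak{g}}(\mathfrak{e}_U)$. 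It is immediate from the explicit formulae that $d_{\gamma_d} = d$ and $\gamma_{d_\gamma} = \gamma$, so $\mathcal{F}$ is a bijection.

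Finally, to upgrade to a group isomorphism, I would compute on a general element $h + s(x)$:
\[
(\gamma_1 \circ \gamma_2)(s(x)) = \gamma_1(s(x) + d_{\gamma_2}(x)) = s(x) + d_{\gamma_1}(x) + d_{\gamma_2}(x),
\]
using $\gamma_1|_\mathfrak{h} = \mathrm{id}_\mathfrak{h}$, whence $d_{\gamma_1 \circ \gamma_2} = d_{\gamma_1} + d_{\gamma_2}$. Thus $\mathcal{F}$ converts composition into addition of derivations, so $\mathrm{Aut}_{\mathfrak{h}}^{\mathfrak{h},\mathfrak{g}}(\mathfrak{e}_U) \cong \mathrm{Der}(\mathfrak{g}_T, \mathfrak{h}_S)$ as groups, and the target being abelian forces the source to be abelian as well. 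The only mildly delicate step is the derivation verification for $d_\gamma$, where one must carefully exploit the triviality of $[\,,\,]_\mathfrak{h}$ and $\gamma|_\mathfrak{h} = \mathrm{id}_\mathfrak{h}$ to eliminate $\chi$- and $\Phi$-contributions; everything else is essentially bookkeeping.
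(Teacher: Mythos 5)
Your proposal is correct and follows essentially the same route as the paper: the same assignment $\gamma \mapsto \gamma s - s$, the same verification of the two derivation identities using $[\,,\,]_\mathfrak{h}=0$, $\gamma|_\mathfrak{h}=\mathrm{id}_\mathfrak{h}$, $U\gamma=\gamma U$ and $Us-sT=\Phi$, and the same additivity computation showing the map turns composition into addition. The only variation is that you obtain surjectivity by specializing Theorem \ref{thm-inducibility} to $(\beta,\alpha)=(\mathrm{id}_\mathfrak{h},\mathrm{id}_\mathfrak{g})$ with $\lambda=d$ (legitimate, since an abelian extension is in particular a non-abelian one), which produces the paper's explicit automorphism $h+s(x)\mapsto h+d(x)+s(x)$ without re-checking by hand that it preserves the bracket and commutes with $U$.
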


\begin{proof}
Let $\gamma \in \mathrm{Aut}_{\mathfrak{h}}^{\mathfrak{h}, \mathfrak{g}} (\mathfrak{e}_U)$. Then for any $x \in \mathfrak{g},$ we have
\begin{align*}
p ((\gamma s - s) (x)) = p \gamma s (x) - ps (x) = 0.
\end{align*}
Therefore, $(\gamma s - s) (x) \in \mathrm{ker }(p) = \mathrm{im }(i) \cong \mathfrak{h}$. We define a map $\overline{\gamma } : \mathfrak{g} \rightarrow \mathfrak{h}$ by $\overline{\gamma} (x) = (\gamma s -s )(x)$, for $x \in \mathfrak{g}$. Then we have
\begin{align*}
\overline{\gamma} ([x,y]_\mathfrak{g}) =~& \gamma s ([x,y]_\mathfrak{g}) - s ([x, y]_\mathfrak{g} ) \\
=~& \gamma \big(  [s(x), s(y)]_\mathfrak{e} - \chi (x,y)   \big)  - \big(  [s(x), s(y)]_\mathfrak{e} - \chi (x,y) \big) \\
=~& [\gamma s (x), \gamma s (y)]_\mathfrak{e} - [s(x), s(y)]_\mathfrak{e} \qquad (\text{as } \mathrm{im}(\chi) \subset \mathfrak{h} \text{ and } \gamma|_\mathfrak{h} = \mathrm{id}_\mathfrak{h}) \\
=~& [s(x) + \overline{\gamma}(x), s(y) + \overline{\gamma}(y)]_\mathfrak{e} - [s(x), s(y)]_\mathfrak{e}  \\
=~& [s(x) , \overline{\gamma}(y)]_\mathfrak{e} + [\overline{\gamma}(x), s(y)]_\mathfrak{e} \\
=~& \psi_x \overline{\gamma} (y) - \psi_y \overline{\gamma} (x)
\end{align*}
and
\begin{align*}
(S \circ \overline{\gamma} - \overline{\gamma} \circ T)(x) =~& S (\overline{\gamma} (x)) - \big(  \gamma s T(x) - sT(x) \big)  \qquad (\text{as } \overline{\gamma} = \gamma s - s) \\
=~& S (\overline{\gamma} (x)) -  \gamma (Us - \Phi)(x) + (Us - \Phi) (x) \qquad (\text{as } sT = Us - \Phi) \\
=~& S (\overline{\gamma} (x)) - \gamma (Us (x)) + Us (x) \qquad (\text{as } \mathrm{im}(\Phi) \subset \mathfrak{h} \text{ and } \gamma|\mathfrak{h} = \mathrm{id}_\mathfrak{h}) \\
=~& (S \overline{\gamma} - U (\gamma s - s))(x) \qquad (\text{as } U \gamma = \gamma U) \\
=~& (S \overline{\gamma} - U \overline{ \gamma}) (x) = 0 \qquad (\text{as } S = U|_\mathfrak{h}),
\end{align*}
for all $x, y \in \mathfrak{g}$. This shows that $\overline{\gamma} \in \mathrm{Der}(\mathfrak{g}_T, \mathfrak{h}_S)$. Finally, for any $\gamma, \eta \in \mathrm{Aut}_\mathfrak{h}^{\mathfrak{h}, \mathfrak{g}} (\mathfrak{e}_U)$, we have
\begin{align*}
\overline{\gamma \eta} = \gamma \eta s - s =~& \gamma (\eta s - s) + \gamma s - s \\
=~& \gamma \overline{\eta} + \overline{\gamma} = \overline{\eta} + \overline{\gamma} \qquad (\text{as } \mathrm{im}(\overline{\eta}) \subset \mathfrak{h} \text{ and } \gamma|_\mathfrak{h} = \mathrm{id}_\mathfrak{h}).
\end{align*}
This shows that the map ~ $~ \bar{  } ~:~ \mathrm{Aut}_\mathfrak{h}^{\mathfrak{h}, \mathfrak{g}} (\mathfrak{e}_U) \rightarrow \mathrm{Der}(\mathfrak{g}_T, \mathfrak{h}_S)$, $\gamma \mapsto \overline{\gamma}$ is a group homomorphism. 

To show that the map ~ $~ \bar{  }~ : \mathrm{Aut}_\mathfrak{h}^{\mathfrak{h}, \mathfrak{g}} (\mathfrak{e}_U) \rightarrow \mathrm{Der} (\mathfrak{g}_T, \mathfrak{h}_S)$ is injective, take an element $\gamma \in \mathrm{ker }( ~ \bar{  }~ )$. Since $\gamma \in \mathrm{Aut}_\mathfrak{h}^{\mathfrak{h}, \mathfrak{g}} (\mathfrak{e}_U)$, we have $\gamma|_\mathfrak{h} = \mathrm{id}_\mathfrak{h}$ and $p \gamma s = \mathrm{id}_\mathfrak{g}$. Moreover, $\gamma \in \mathrm{ker }(~ \bar{  }~)$ implies that $\overline{\gamma} = 0$. Therefore, for any $e = h + s(x) \in \mathfrak{e}$, we have
\begin{align*}
\gamma (e) = \gamma (h + s(x)) = \gamma (h) + \underbrace{(\gamma s -s)(x)}_{ = \overline{\gamma}(x)} + s(x) = h + s(x) = e
\end{align*}
which implies that $\gamma = \mathrm{id}_\mathfrak{e}$. This shows that the map $~ \bar{  }~$ is injective.
To show that the map $~ \bar{  }~$ is surjective, take any $d \in \mathrm{Der} (\mathfrak{g}_T, \mathfrak{h}_S)$. We define a map $\underline{d} : \mathfrak{e} \rightarrow \mathfrak{e}$ by
\begin{align*}
\underline{d}(h + s(x)) = (h + d(x)) + s(x), \text{ for } e = h+ s(x) \in \mathfrak{e}.
\end{align*}
Then for any elements $e_1 = h_1 + s(x_1)$  and $e_2 = h_2 + s(x_2)$, it is not hard to see that $\underline{d} [e_1, e_2]_\mathfrak{e} = [\underline{d}(e_1), \underline{d}(e_2)]_\mathfrak{e}$. Moreover, we have
\begin{align*}
(U \circ \underline{d})(e) =~& (U \circ \underline{d}) (h+ s(x))\\
=~& U (h + d(x) + s(x)) \\
=~& S (h) + S d(x) + Us (x) \\
=~& S(h) + dT (x) + \Phi (x) + sT (x)  \qquad (\text{as } S d = dT \text{ and } Us = \Phi + sT) \\
=~& \underline{d} \big( S (h) + \Phi (x) + sT (x) \big) \\
=~& \underline{d} \big(  S(h) + Us (x) \big) \\
=~& (\underline{d} \circ U) (h + s(x) ) = (\underline{d} \circ U) (e).
\end{align*}
This shows that $\underline{d} \in \mathrm{Aut} (\mathfrak{e}_U).$ The map $\underline{d}$ induces identity on both $\mathfrak{h}$ and $\mathfrak{g}$ as
\begin{align*}
&\underline{d}|_\mathfrak{h} (h) = \underline{d} (h + s(0) ) = h,  ~ \text{ and } \\
&(p \underline{d} s)(x) = p \underline{d} (0 + s(x)) = p (d(x) + s(x)) = ps(x) ~~~(\text{as } p|_\mathfrak{h} = 0) ~  = x, 
\end{align*}
for $h \in \mathfrak{h}, x \in \mathfrak{g}.$
Hence $\underline{d} \in \mathrm{Aut}_\mathfrak{h}^{\mathfrak{h}, \mathfrak{g}} (\mathfrak{e}_U)$. Finally, it is easy to see that  $~ \bar{  }~ (\underline{d}) = d$. This shows that the map $~ \bar{  }~$ is surjective, and hence bijective. This completes the proof.
\end{proof}



Combining Theorem \ref{wells-seq-abl} and Proposition \ref{aut-der}, we obtain the following.

\begin{corollary}\label{abl-ext2}
There is a short exact sequence
\begin{align}
\label{wells_cor}
0 \rightarrow \mathrm{Der}(\mathfrak{g}_T, \mathfrak{h}_S) \rightarrow \mathrm{Aut}_\mathfrak{h} (\mathfrak{e}_U) \xrightarrow{\tau} C_\psi \xrightarrow{\mathcal{W}} H^2_{\mathrm{RBLie}} (\mathfrak{g}_T, \mathfrak{h}_S).
\end{align}
\end{corollary}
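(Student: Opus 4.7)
The plan is to derive this corollary by straightforward substitution into the Wells exact sequence of Theorem \ref{wells-seq-abl} using the isomorphism established in Proposition \ref{aut-der}. Nothing new needs to be verified at the level of exactness; the content is merely to reinterpret the leftmost term.

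First, I would recall the Wells short exact sequence from Theorem \ref{wells-seq-abl}:
\[
1 \rightarrow \mathrm{Aut}_\mathfrak{h}^{\mathfrak{h}, \mathfrak{g}} (\mathfrak{e}_U) \xrightarrow{\iota} \mathrm{Aut}_\mathfrak{h} (\mathfrak{e}_U) \xrightarrow{\tau} C_\psi \xrightarrow{\mathcal{W}} H^2_{\mathrm{RBLie}} (\mathfrak{g}_T, \mathfrak{h}_S).
\]
Next, I would invoke Proposition \ref{aut-der}, which gives a group isomorphism $\mathrm{Aut}_\mathfrak{h}^{\mathfrak{h}, \mathfrak{g}} (\mathfrak{e}_U) \cong \mathrm{Der}(\mathfrak{g}_T, \mathfrak{h}_S)$. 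Since $\mathrm{Der}(\mathfrak{g}_T, \mathfrak{h}_S)$ is an abelian group written additively (with identity $0$), the trivial starting term $1$ of the multiplicative sequence may be replaced by $0$ after transporting along this isomorphism.

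Substituting $\mathrm{Der}(\mathfrak{g}_T, \mathfrak{h}_S)$ in place of $\mathrm{Aut}_\mathfrak{h}^{\mathfrak{h}, \mathfrak{g}} (\mathfrak{e}_U)$ in the above sequence yields precisely (\ref{wells_cor}). The map $\mathrm{Der}(\mathfrak{g}_T, \mathfrak{h}_S) \rightarrow \mathrm{Aut}_\mathfrak{h} (\mathfrak{e}_U)$ is the composition of the inverse of the isomorphism from Proposition \ref{aut-der} with $\iota$; explicitly it sends a derivation $d$ to the automorphism $\underline{d}$ constructed in the proof of that proposition. Exactness at $\mathrm{Aut}_\mathfrak{h} (\mathfrak{e}_U)$ follows because the image of this composition equals $\mathrm{im}(\iota) = \ker(\tau)$, and exactness at $C_\psi$ is inherited directly from Theorem \ref{wells-seq-abl}.

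There is no real obstacle here since everything is a formal consequence; the only thing to be careful about is notational, namely that the group operation on $\mathrm{Aut}_\mathfrak{h}^{\mathfrak{h},\mathfrak{g}}(\mathfrak{e}_U)$ corresponds under the isomorphism of Proposition \ref{aut-der} to addition of derivations (which was already verified in that proposition via the computation $\overline{\gamma \eta} = \overline{\gamma} + \overline{\eta}$). This justifies writing the leftmost term as $0$ rather than $1$ and completes the proof.
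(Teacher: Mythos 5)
Your proposal is correct and matches the paper exactly: the paper obtains the corollary by simply combining Theorem \ref{wells-seq-abl} with the isomorphism $\mathrm{Aut}_\mathfrak{h}^{\mathfrak{h},\mathfrak{g}}(\mathfrak{e}_U) \cong \mathrm{Der}(\mathfrak{g}_T,\mathfrak{h}_S)$ of Proposition \ref{aut-der}, which is precisely the substitution you carry out. Your extra remarks on the explicit map $d \mapsto \underline{d}$ and on the additive notation are accurate bookkeeping and do not change the argument.
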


\medskip

We shall now consider a particular situation where the underlying abelian extension is split. We apply our results to find a description of the automorphism group $\mathrm{Aut}_\mathfrak{h} (\mathfrak{e}_U)$ in terms of compatible pairs and derivations on the Rota-Baxter Lie algebra $\mathfrak{g}_T$. 

An abelian extension $0 \rightarrow \mathfrak{h}_S \xrightarrow{i} \mathfrak{e}_U \xrightarrow{p} \mathfrak{g}_T \rightarrow 0$ (with the $\mathfrak{g}$-module structure on $\mathfrak{h}$ given by $\psi : \mathfrak{g} \rightarrow \mathrm{End}(\mathfrak{h})$) is said to {\bf split} if there exists a section $s : \mathfrak{g} \rightarrow \mathfrak{e}$ which is a morphism of Rota-Baxter Lie algebras from $\mathfrak{g}_T \rightarrow \mathfrak{h}_S$. In this case, we can identify the Rota-Baxter Lie algebra $\mathfrak{e}_U$ as
$\mathfrak{e}_U  \cong  (\mathfrak{g} \oplus \mathfrak{h})_{T \oplus S}$, where the Lie bracket on $\mathfrak{g} \oplus \mathfrak{h}$ is given by the semidirect product
\begin{align}\label{semi-for}
[(x,h), (y,k)]_\ltimes :=  ([x,y]_\mathfrak{g}, \psi_x k - \psi_y h), ~ \text{for } (x,h), (y, k) \in \mathfrak{g} \oplus \mathfrak{h}.
\end{align}
This is precisely the semidirect product Rota-Baxter Lie algebra, denoted by $(\mathfrak{g} \ltimes \mathfrak{h})_{T \oplus S}$. With this identification, the maps $i, p$ and $s$ are the obvious ones. Moreover, since $s: \mathfrak{g}_T \rightarrow \mathfrak{e}_U$ is a Rota-Baxter Lie algebra morphism, we have
\begin{align*}
\chi(x,y) = [s(x), s(y)]_\mathfrak{e} - s[x,y]_\mathfrak{g} = 0 ~~~ \text{ and } ~~~~ \Phi (x) = (Us - sT)(x) = 0, \text{ for } x, y \in \mathfrak{g}.
\end{align*}
Hence $(\chi, \Phi) = 0$ which implies that the Wells map (\ref{w-abel}) vanishes identically. Therefore, the exact sequence (\ref{wells_cor}) takes the following form:
\begin{align}\label{wells-split}
0 \rightarrow \mathrm{Der}(\mathfrak{g}_T, \mathfrak{h}_S) \rightarrow \mathrm{Aut}_\mathfrak{h}((\mathfrak{g} \ltimes \mathfrak{h})_{T \oplus S}) \xrightarrow{\tau} C_\psi \rightarrow 0.
\end{align}

We can check that the above sequence is split exact sequence. To show that, we proceed as follows. For any $(\beta, \alpha) \in C_\psi$, we first define a map $t_{(\beta, \alpha)} : \mathfrak{g} \oplus \mathfrak{h} \rightarrow \mathfrak{g} \oplus \mathfrak{h}$ by $t_{(\beta, \alpha)} (x,h) = (\alpha (x), \beta(h))$, for $(x,h) \in \mathfrak{g} \oplus \mathfrak{h}$. The map $t_{(\beta, \alpha)}$ is a bijection and satisfies $( t_{(\beta, \alpha)} )|_\mathfrak{h} \subset \mathfrak{h}$. Moreover, it is easy to verify that $t_{(\beta, \alpha)}$ preserves the semidirect product bracket (\ref{semi-for}) and commute with the map $T \oplus S : \mathfrak{g} \oplus \mathfrak{h} \rightarrow \mathfrak{g} \oplus \mathfrak{h}$. Hence we have $t_{(\beta, \alpha)} \in \mathrm{Aut}_\mathfrak{h} ((\mathfrak{g} \ltimes \mathfrak{h})_{T \oplus S})$. Therefore, we obtain a map $t : C_\psi \rightarrow \mathrm{Aut}_\mathfrak{h} ((\mathfrak{g} \ltimes \mathfrak{h})_{T \oplus S}), (\beta, \alpha) \mapsto t_{(\beta, \alpha)}.$ The map $t$ obviously satisfies $\tau t = \mathrm{id}_{C_\psi}$ which implies that $t$ is a section of the map $\tau$ in the exact sequence (\ref{wells-split}). Further, the map $t$ is a group homomorphism. Thus, it makes the sequence (\ref{wells-split}) into a split exact sequence. Therefore, we get the following.

\begin{proposition}
Let $0 \rightarrow \mathfrak{h}_S \xrightarrow{i} \mathfrak{e}_U \xrightarrow{p}  \mathfrak{g}_T \rightarrow 0$ be a split and abelian extension of the Rota-Baxter Lie algebra $\mathfrak{g}_T$ by a representation $\mathfrak{h}_S$. Then $\mathrm{Aut}_\mathfrak{h} (\mathfrak{e}_U) \cong C_\psi \ltimes \mathrm{Der}(\mathfrak{g}_T, \mathfrak{h}_S)$ as groups, where we use the semidirect product of groups on the right hand side. In particular, the automorphism group $\mathrm{Aut}_\mathfrak{h} ( (\mathfrak{g} \ltimes \mathfrak{h})_{T \oplus S})$ is itself a semidirect product.
\end{proposition}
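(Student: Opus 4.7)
The proof is essentially a formal consequence of the two results assembled immediately before the statement: the split short exact sequence (\ref{wells-split}) and the identification $\mathrm{Aut}_{\mathfrak{h}}^{\mathfrak{h}, \mathfrak{g}}(\mathfrak{e}_U) \cong \mathrm{Der}(\mathfrak{g}_T, \mathfrak{h}_S)$ from Proposition \ref{aut-der}. The plan is to invoke the standard fact that a split short exact sequence of groups with a group-homomorphism section produces an internal semidirect product, and to make this structure explicit in the present setting.

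First, I would fix the action of $C_\psi$ on $\mathrm{Der}(\mathfrak{g}_T, \mathfrak{h}_S)$ by conjugation through the splitting $t$: given $(\beta, \alpha) \in C_\psi$ and $d \in \mathrm{Der}(\mathfrak{g}_T, \mathfrak{h}_S)$, set
\begin{align*}
(\beta, \alpha) \cdot d := \overline{t_{(\beta, \alpha)} \circ \underline{d} \circ t_{(\beta, \alpha)}^{-1}},
\end{align*}
where $\underline{d}$ is the automorphism corresponding to $d$ under Proposition \ref{aut-der} and the overline refers to the inverse isomorphism. Using the explicit formulas $\underline{d}(x, h) = (x, h + d(x))$ and $t_{(\beta, \alpha)}(x, h) = (\alpha(x), \beta(h))$, a direct computation shows that the right-hand side coincides with the linear map $\beta \circ d \circ \alpha^{-1} : \mathfrak{g} \rightarrow \mathfrak{h}$. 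The compatibility relation defining $C_\psi$ together with $S\beta = \beta S$ and $T\alpha = \alpha T$ is precisely what ensures that $\beta \circ d \circ \alpha^{-1}$ is again a derivation on $\mathfrak{g}_T$ with values in $\mathfrak{h}_S$ (so the action is well-defined), and functoriality of conjugation shows it is a group action.

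Finally, I would define the map
\begin{align*}
\Psi : \mathrm{Aut}_\mathfrak{h}\bigl((\mathfrak{g} \ltimes \mathfrak{h})_{T \oplus S}\bigr) \longrightarrow C_\psi \ltimes \mathrm{Der}(\mathfrak{g}_T, \mathfrak{h}_S), \qquad \gamma \longmapsto \Bigl(\tau(\gamma),\ \overline{\gamma \circ t_{\tau(\gamma)}^{-1}}\Bigr),
\end{align*}
with inverse $((\beta, \alpha), d) \mapsto \underline{d} \circ t_{(\beta, \alpha)}$. That $\Psi$ is a bijection of sets follows immediately from the exactness of (\ref{wells-split}) and the fact that $t$ is a section of $\tau$; that it intertwines the group laws is then a purely formal check against the definition of the semidirect product multiplication, using the conjugation action introduced above. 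The principal statement of the proposition follows, and the concluding remark is simply the specialization to the split extension at hand. The one place where care is needed is verifying that the conjugation formula really lands in $\mathrm{Der}(\mathfrak{g}_T, \mathfrak{h}_S)$ and equals $\beta \circ d \circ \alpha^{-1}$; everything else is bookkeeping.
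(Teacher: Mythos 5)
Your proposal is correct and follows essentially the same route as the paper: the paper also deduces the proposition from the split exact sequence (\ref{wells-split}) together with the group-homomorphism section $t_{(\beta,\alpha)}(x,h)=(\alpha(x),\beta(h))$ and the identification $\mathrm{Aut}_{\mathfrak{h}}^{\mathfrak{h},\mathfrak{g}}(\mathfrak{e}_U)\cong\mathrm{Der}(\mathfrak{g}_T,\mathfrak{h}_S)$ of Proposition \ref{aut-der}. You merely spell out the standard splitting-implies-semidirect-product argument, including the explicit conjugation action $(\beta,\alpha)\cdot d=\beta\circ d\circ\alpha^{-1}$, which the paper leaves implicit.
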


\medskip

\medskip

\noindent {\bf Acknowledgements.} A. Das would like to thank IIT Kharagpur (India) for providing a beautiful academic atmosphere where his part of the research has been carried out.

\end{document}